\newcommand{\asympl}{\ensuremath{\;\dot{\sim}\;}}
\newcommand{\nasympl}{\ensuremath{\;\dot{\nsim}\;}}
\newlength{\bibitemsep}\setlength{\bibitemsep}{1pt}\newlength{\bibparskip}\setlength{\bibparskip}{2pt}\let\oldthebibliography\thebibliography\renewcommand\thebibliography[1]{\oldthebibliography{#1}\setlength{\parskip}{\bibitemsep}\setlength{\itemsep}{\bibparskip}}
\DeclareMathOperator{\sgn}{sgn}
\newtheorem{theorem}{Theorem}
\newtheorem{lemma}{Lemma}
\newtheorem{corollary}{Corollary}
\newtheorem{remark}{Remark}
\newtheorem{assumption}{Assumption}
\theoremstyle{definition}
\newtheorem*{defn*}{Definition}
\DeclareFontFamily{U}{calligra}{}
\DeclareFontShape{U}{calligra}{m}{n}{<->callig15}{}
\begin{document}
\onecolumn
\title{Whiplash Gradient Descent Dynamics}
\author{Subhransu~S.~Bhattacharjee$^*$~and~Ian~R.~Petersen$^*$
\thanks{*Both authors were with the School of Engineering, The Australian National University at the time of writing this paper. Subhransu is the corresponding author for this paper. Please contact him if required at \href{mailto:Subhransu.Bhattacharjee@anu.edu.au}{Subhransu.Bhattacharjee@anu.edu.au}}.}
\maketitle
\begin{abstract}
In this paper, we propose the Whiplash Inertial Gradient dynamics, a closed-loop optimization method that utilises gradient information, to find the minima of a cost function in finite-dimensional settings. We introduce the symplectic asymptotic convergence analysis for the Whiplash system for convex functions. We also introduce relaxation sequences to explain the non-classical nature of the algorithm and an exploring heuristic variant of the Whiplash algorithm to escape saddle points, deterministically. We study the algorithm's performance for various costs and provide a practical methodology for analyzing convergence rates using integral constraint bounds and a novel Lyapunov rate method. Our results demonstrate polynomial and exponential rates of convergence for quadratic cost functions.
\end{abstract}
\begin{IEEEkeywords}
Gradient descent; Feedback systems; Finite-dimensional optimization; Non-linear dynamics; Lyapunov Method; Unconstrained optimization; Heuristic Optimization
\end{IEEEkeywords}
\section{Introduction}
In this paper, we study the continuous optimization of finite-dimensional, unconstrained problems. We revisit classical optimization theories at the heart of popular deep learning algorithms. While studying these algorithms, we consider finite-dimensional global minimization problems with optima at $x^*\in\mathcal{X}$, such that in finite dimension $d>0$, we have objective costs of the form $$f^* = \min_{\mathcal{X}\in\mathbb{R}^d} f(x),$$ where $f:\mathbb{R}^d\mapsto\mathbb{R}$. These problems arise in fields such as deep learning, economics, and physics. With stochastic gradient approaches facing information bottlenecks \cite{bottle}, it is important to revisit deterministic approaches to understand ways to improve modern optimization tools from both practical and theoretical perspectives. Second-order methods which are significantly faster \cite{beck} often tend to be computationally infeasible. Thus, we look back at first-order gradient methods which are computationally cheap and deterministic in nature. 

Treating optimization algorithms as differential equations have been done in the past, where the stable points of the gradient flow system are the optimal points of the cost function \cite{odeo}. In our approach, we make use of non-linear systems theory frameworks to analyze algorithms in continuous-time as well as tuning their behaviour in discrete-time in order to introduce novel methods with wider practicality in the context of systems theory and numerical analysis. In order to systematise the treatment we primarily consider the gradient function as the black box oracle for continuous functions introduced by Nemirovsky \cite{Nem}.

\subsection{Motivation}

In the field of continuous optimization, black box models are prevalent in settings with scarce computational resources where learning about the global geometry of a function is not feasible. In most situations, a descent direction is necessary for a gradient descent method, making the computation of a function's gradient necessary. Beyond the computational complexities involved in finding the gradient of the function itself, black box methods allow for finding mathematically optimal convergence of iterative processes used for optimization. While global optimization is the true goal for such methods, the optimization of convex functions is inherently simpler because finding a single minimum within a comparatively uniform geometry is achievable using just the gradient information.

In black box methods, we consider computational models that make queries to an oracle that contain a finite-dimensional vector map of the entire linear span of gradients of a cost function \cite{Nem}. While various methods of convex optimization has evolved, including the optimal convex method, by accelerating the process, as shown by Nesterov \cite{Nest}, it still happens to be an area of active research. One of the reasons for the use of black box models in continuous optimization is to handle the variety of non-convex problems encountered in the context of deep learning and robotics. Non-convex problems require a diverse set of approaches, but assuming local convexity is the predominant way to prove convergence in most scenarios. While modern optimization uses stochastic methods to solve such problems \cite{primer}, deterministic methods are constructive when it comes to designing new methods for optimization, as the convergence analysis is far simpler. 

Treating optimization methods as dynamical systems allows for tuning the parameters in order to scale the flow leading to faster convergence \cite{odeo}, which can later be converted to discrete algorithms using discretization methods. A variational approach developed by Wilson et al. \cite{WILL} describes a generalized Lyapunov analysis framework for continuous-time systems. They showed that this technique of estimate sequences is equivalent to a family of Lyapunov functions in both continuous and discrete-time. This framework has led to the use of carefully designed time-scaled energy functions as a standard method to prove convergence rates for optimization algorithms, which can be studied as continuous-time inertial gradient flow dynamical systems. Shi et al. \cite{Shi} demonstrate that it is impractical to use discrete schemes to find discrete Lyapunov functions. Since, Su et al. \cite{su2015differential} showed the Nesterov’s method had a dynamical system equivalent, we know that treating optimization algorithms as dynamical systems make them easier to analyse their properties in high-resolution. Although it is much more feasible in continuous time, the task of finding such  energy functions is nevertheless complicated. Depending upon the structure of the terms involved (often such terms are time-varying and non-linear), and can only be derived vaguely using physical energy-based motivations. As a method to bypass this problem, Attouch et al. \cite{attouch2021fast} introduced a closed-loop method as a heuristic design based on physical analogies. They used a parametric manufactured solution for a particular cost function to consider a specific family of dynamical systems. However, this method is limited from a control-theoretic perspective as it does not provide any energy function to prove the cost function's convergence rate. 

The task of finding appropriate Lyapunov functions becomes challenging for closed-loop ODEs, which use damping coefficients because the damping terms depend on the system's dynamics and not just on the function, leading to non-classical behavior such as accelerated phases followed by attenuating oscillations. Our study aims to provide a generalized numerical energy-based framework to find the strength of convergence for unconstrained convex optimization problems using a certain class of closed-loop gradient flow control systems using nonlinear feedback systems theory.

\subsection{Contribution} 
This paper is an extended culmination of the conference papers: \cite{Whiplash} and \cite{ACC}. In \cite{Whiplash}, we introduced the Whiplash Inertial Gradient dynamics as born out of the motivation to treat optimization algorithms as dynamical systems and perform control system analyses to find a novel method that is non-classical in nature from the accelerated gradient methods. It is significantly faster than other deterministic algorithms by a magnitude of $10^5$ for the same choice of step-size for the Rosenbrock's function. In \cite{ACC}, we introduced relaxation sequences, which defined the characteristics of the Whiplash dynamics in discrete-time and the novel convergence rate methodology, the Lyapunov Rate method, which is an alternative to finding convergence rates using complicated Lyapunov arguments which are difficult to find. To accomplish this, we introduce for convex functions, an asymptotic convergence framework built on certain generalised assumptions which allow for proving the bounded nature of the terms of the Lyapunov function, by exploiting the bounded nature of the globally asymptotic system's solutions. We have shown numerical convergence results for this method, verified using the Whiplash inertial gradient system both in scalar and vector settings for quadratic cost functions.

In this paper, we add to the combined literature presented in the earlier papers and integrate them cohesively geared towards the numerical analysis methods and the novel relaxation sequence methods. Beyond that the contributions of this paper are:
\begin{enumerate}
    \item Numerical results involving near saddle point behaviors and numerical convergence for multidimensional functions have been introduced in \S \ref{num}, adding to the previous numerical results\footnote{Unless mentioned otherwise, the x-axis of all graphs denotes time in seconds. All numerical experiments can be found \href{https://github.com/SubhransuSekharBhattacharjee-01/Whiplash.git}{\color{blue}{here}}.} already introduced for convex costs and Rosenbrock's function.
    \item In addition to the previously introduced results, we have also introduced multiple new lemmas (\ref{jp}, \ref{grab}, \ref{9}) and theorems (\ref{t2}, \ref{t4}, \ref{t5}) that explain the concept of relaxation sequences (in \S \ref{relax}), asymptotic loose bound (in \S \ref{alb}) and numerical rates (\S \ref{ver}), as well as justifications to the results obtained which were not found in the previous papers.
    \item Finally, we have introduced the Whiplash exploration algorithm in  \S \ref{explore}, which uses a momentum sequence based stopping criterion and uses a heuristic termination and restart scheme to drive out of saddle points by exploiting both the first and zeroth order oracles.
\end{enumerate}  
 
\subsection{Organisation}
This paper has been structured into five more sections going forward. Section \ref{sec2} discusses the preliminaries and notations necessary to understand the work, including basic properties and prerequisites in the field of convex analysis. In Section \ref{sec3}, we have introduced the Whiplash gradient descent method - the formulation, discretization, the algorithm, and numerical results in a variety of scenarios. We also introduce the novel Whiplash exploration algorithm motivated from the numerical results and insights obtained from the behaviour of naive methods. This algorithm is heuristic and deterministic and escapes saddle points. In Section \ref{sec4}, we introduce the convergence analyses, including various theoretical results to showcase the convergence rates. We  also introduce our novel relaxation sequences for $L$-smooth functions and their characteristics like the absolute convergence. In Section \ref{sec5}, we introduce the numerical convergence analysis (envelope method) of the closed-loop gradient flow systems by introducing an asymptotic loose bound (which acts as an extension of the $\mathcal{O}$ notation), an integral anchor method and the Lyapunov Rate method (already introduced in \cite{ACC}). We provide a verification of our analysis for the Whiplash gradient descent dynamics using this methodology. In Section \ref{sec6}, we conclude our exposition by discussing the shortcomings of our approach, and some areas of future work in the field of global optimization using the control framework.
 
\section{Background}\label{sec2}
\subsection{Preliminaries}
A central aspect of analyzing gradient-based learning methods is to make necessary assumptions for a class of problems. One such assumption is the Lipschitz continuity of the gradient of the cost function, where there exists a constant $L$, termed as the Lipschitz constant, such that:
\begin{equation}\label{2}
  \| \nabla{f}(y)-\nabla{f}(x)\|\leq\, L \| y-x \|\quad\forall \; x,y\,\in \mathbb{R}^d,
\end{equation}
where $\|.\|$ indicates the $\ell_2$ or the Euclidean norm. We assume this condition for our cost functions throughout our analysis unless specified otherwise. When we discuss iterative gradient schemes in optimization, a sufficient condition for learning the gradient of such a cost function is to take a small enough step-size $s$ such that:
\begin{equation}
    0 < s\leq \frac{1}{L}.\label{3}
\end{equation}
We assume the cost function $f(x)$ is such that $f: \mathbb{R}^{d} \mapsto \mathbb{R}$ is a twice continuously differentiable convex function such that for all $\epsilon \in[0,1]$ \cite{boyd}:
\begin{equation} 
    f(\epsilon x+(1-\epsilon)y) \leq \epsilon f(x)+(1-\epsilon)f(y).
\end{equation}
This relation implies that Jensen's inequality holds
\begin{equation}
\label{jen}
   f(y) \geq f(x)+\langle\nabla f(x), y-x\rangle.
\end{equation}
Rearranging (\ref{jen}), we have the global under-estimator of the convex function $f(x)$ where both $x,y\in\mathbb{R}$ \cite{beck}, we have:
    \begin{equation}
    \label{bomb}
    f(x)-f(y)\leq \langle \nabla f(x), x-y \rangle\quad.
\end{equation}
For $L$-smooth convex functions (which have Lipschitz continuous gradient), where we choose the step-size as $s=\frac{1}{L}$, $\forall\;x,y\in\mathbb{R}$, for $|x^*|< +\infty$ we have
 \begin{gather}
f(y) \leq f(x)+\langle\nabla f(x), y-x\rangle+\frac{L}{2}\|y-x\|^{2}.
\label{ls}
\end{gather} 
We will additionally consider the Polyak-{\L}ojasiewicz (P{\L}) inequality, which implies weak invexity \cite{kar}, which is a local criterion of strict convexity. If we prove convergence for the P{\L} condition, it would naturally hold for a stronger convexity criterion. The P{\L} inequality is satisfied if the following holds for some $\lambda>0$ and every $x\in\mathbb{R}$:
\begin{equation}
\label{PL}
    f(x)-f^{*} \leq \frac{1}{2\lambda}\|\nabla f(x)\|^{2}.
\end{equation}
We use the Cauchy-Schwarz and mean value theorems in our study, both of which can be found in preliminary section of \cite{boyd}.
\subsection{Notation}
We represent the set of all real numbers as $\mathbb{R}$ and the positive real sets as $\mathbb{R}^{+} = \{x \in \mathbb{R} \mid x \geq 0\}$ and $\mathbb{R}_{*}^{+} = \{x \in \mathbb{R} \mid x>0\}$. We represent the superior limit as $\varlimsup$. We use the asymptotic Landau notations $\mathcal{O}$ and $o$, which denotes the loose and strict asymptotic upper bounds \cite{cormen}, defined for $t_0\in[0,\infty)$ respectively for the decay function $\nu(t)>0$ as
\begin{equation}
    \varlimsup_{t\rightarrow t_0}|\nu(t)f(t)| = C< \infty\;\text{is denoted as}\; f(t) = \mathcal{O}\left(\frac{1}{\nu(t)}\right),
\end{equation}
where $C\in \mathbb{R}^+_*$ is an arbitrary constant and  
\begin{equation}
    \varlimsup_{t\rightarrow t_0}|\nu(t)f(t)|= 0\;\text{is denoted as}\; f(t) = o\left(\frac{1}{\nu(t)}\right).
\end{equation}
\section{Whiplash Inertial Gradient Descent}\label{sec3}
\subsection{Formulation}
The Whiplash Inertial Gradient descent design was inspired by a physical understanding of the dynamical system, which involves control using its momentum as developed by Su et al. \cite{su2015differential}, where they model the Nesterov scheme as an ODE. Further generalizations as developed by Attouch et al. \cite{ATTOUCH2016}, using a Hessian-driven approach and Wibisono et al. \cite{wibi} for the formulation of a variational approach using time-scaling lead to the simplified inertial gradient dynamical system:
\begin{equation}
    \ddot{X}+\gamma(t)\dot{X}+\nabla{f}(X) = 0.
\end{equation} 
In \cite{attouch2021fast}, closed-loop control is considered using multiple scenarios with a damping constant of the form $\gamma = r\,|\dot{x}|^{p-2}$, where $p$ and $r$ are positive constants (control parameters) and $\dot{x}$ is the velocity. However, the simulation results we obtained for this algorithm were unsatisfactory for minimizing non-convex functions. We made two particular observations:
\begin{enumerate}
\item The damping function did not adequately stabilize the system over long intervals for various values of $p$ and $r$. This inspired us to use the control parameter $r$ as $t$ (time) and $p = 4$.
\item The lack of stability of the system over large intervals of time led us to look at the system as a linear time-variant system and to perform a corresponding pole placement. This required adding a marginal value to the damping to ensure a lower bound on the damping at $t=0$, such that the damping term never disappears, because that leads to lack of attenuation as the system approaches the minima.
\end{enumerate}
The linearized ODE is equivalent to the form:
\begin{equation}\label{8}
\ddot{X}+\gamma\dot{X}+\nabla^2{f}(X)=0,
\end{equation}
where the Hessian $\nabla^2 f(X)$ is of the objective function at the minima $X=X^*$, we thus look at the eigenvalues for the linear time-variant system matrix corresponding to (\ref{8}) to understand its convergence rate motivated by the approach in \cite{tomas2010linear}. For this purpose, we define a new system with state $y$ for the underlying linearised state space system $\dot{x} = A\,x$, which is given as:
\begin{equation}\label{10}
    y = x\,e^{-\eta t}, \;\; \eta>0\,.
\end{equation}
This relation leads to the redefined system:
\begin{equation}\label{11}
     \dot{y} + \eta\,y = A\,y\,,
\end{equation}
which in matrix form can be written as:
\begin{equation}\label{12}
  \dot{y} = (A\,-\eta I)\,y.
\end{equation}
The minimum real part of the eigenvalues of the system (14) can be written as:
\begin{equation}\label{13}
  \sigma = \frac{-\,\gamma}{2}-\eta,
\end{equation}
which indicates an exponential convergence rate of at least $\eta$. Finally, we redefine the damping $\gamma = K+t\|\dot{x}\|^2$. We choose the term $K = 1$ for the damping coefficient. Note that the choice of $K =1$ is because using $K>1$ leads to unwanted damping in implementation. Our motivation for the choice of $K=1$ was inspired by the Heavy ball dynamical system, as in at $t=0$, the system essentially behaves as a Heavy Ball system. As we know from \cite{hal}, for values $K<1$, the convergence rate is slower because of the unwanted oscillations due to an unstable pole at $t=0$. Hence, the value of $K$ acts as a marginal value for the safe placement of the pole. Thus, we arrive at our hybrid gradient descent optimization method, which we shall refer to as the \textit{whiplash inertial gradient optimization method}:
\begin{equation}\label{wp}
    \ddot{X}+(1+t\,||\dot{X}||^2\,)\dot{X}+\nabla f(X) = 0.
\end{equation}
A block diagram for this system is shown in Figure: \ref{fig:gamma}. The damping of the system for convex costs is as shown in figure: \ref{fig:moment}, with respect to time\footnote{The unique graph of the damping led to the nomenclature of the system as \say{Whiplash}.}.
\begin{figure} 
\centering
    \includegraphics[width = 0.6\columnwidth]{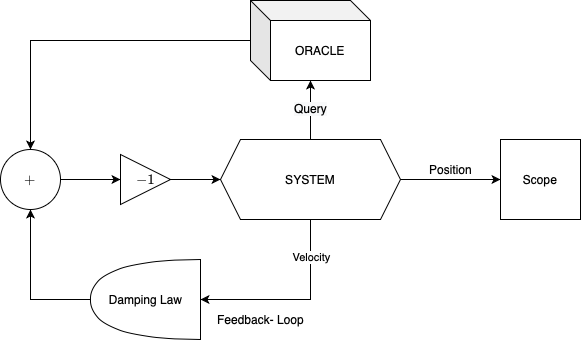}
\caption{Whiplash control system block diagram.} \label{fig:gamma}
\end{figure}
\begin{figure} 
\centering
    \includegraphics[width = 0.65\columnwidth]{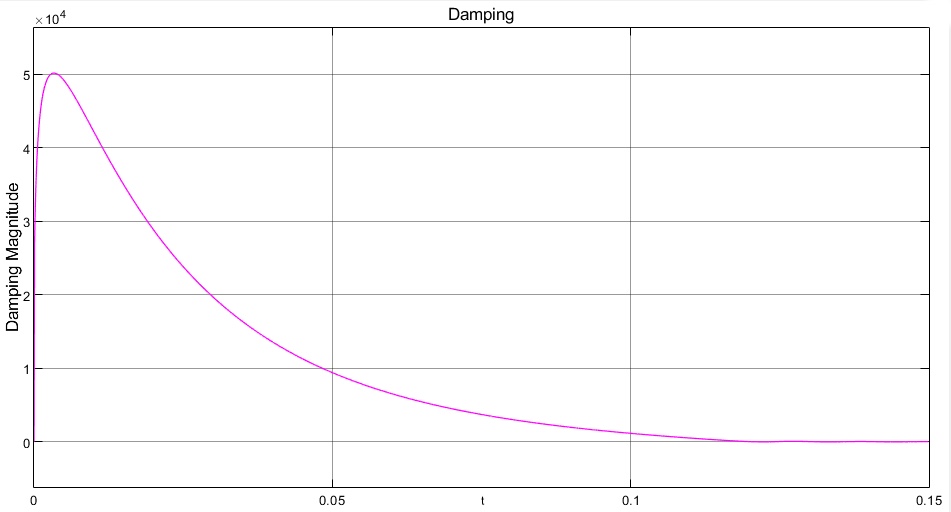}
\caption{Damping of the dynamics with respect to time for convex costs.} \label{fig:moment}
\end{figure}
\subsection{Discretisation \& Physical Model}
\begin{figure} 
    \centering
    \includegraphics[width = 0.75\columnwidth]{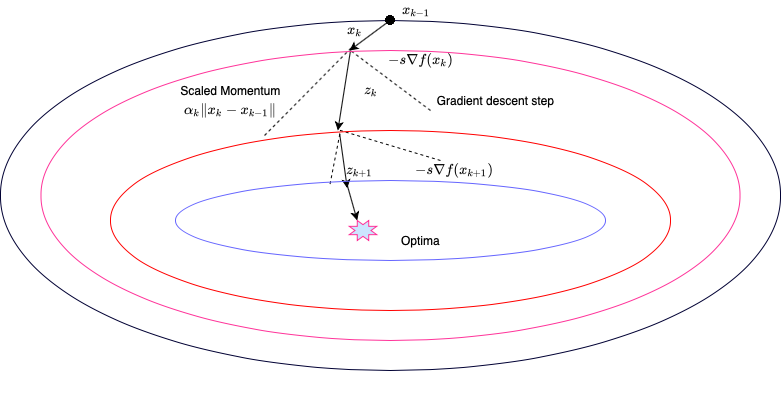}
    \caption{Description of the whiplash gradient descent method.}
    \label{fig:scheme}
\end{figure}
The discretisation used is the semi-implicit or symplectic Euler method \cite{bp}. Using a discrete-time step $s$ and sampling period $t \approx \, k\sqrt s$, we obtain a two-state recursive estimate of the acceleration and velocity as shown below as the equivalent of the Hamiltonian flow for the state-space, given as:
\begin{equation}\label{20}
\begin{split}
& \dot{X} \approx v_k = \frac{x_{k+1}-x_{k}}{\sqrt{s}},\\[0.1cm]
& \ddot{X} \approx \frac{v_{k+1}-v_{k}}{\sqrt{s}},\\
& 1+t\,||\dot{X}||^2\, \approx 1+k\sqrt{s}||v_k||^2.
\end{split}
\end{equation}
Now, we modify (\ref{wp}) to add a fixed mass to the system, which has been considered unit magnitude up until this point. The choice of mass that we shall make is $m = \frac{1}{\sqrt{s}}$. The idea of introducing this mass in inertial gradient flow methods while discretizing them has been inspired by selective mass scaling in finite element methods \cite{fem}, where the iterative process can be scaled by choosing an effective mass. The rationale is that since the discrete-time method depends heavily on the step size, it will take much longer to converge for smaller step sizes. Hence, to counter this effect, we introduce a fixed mass, which scales the dynamics depending on the step size. Upon making these substitutions and modifications to (\ref{wp}), as a model of the second order damped oscillator, with no external forces or perturbation, and a potential function $g(x)$, mass $m>0$ and damping $c>0$, given as
\begin{equation}
    m\ddot{x}(t)+c\dot{x}(t)+g\left(x(t)\right)=0,
\end{equation}
we obtain
\begin{equation}\label{21}
    \frac{1}{\sqrt{s}}\cdot \frac{v_{k+1}-v_{k}}{\sqrt{s}} + (1 + k\sqrt{s}||v_k||^2)\frac{v_k}{\sqrt{s}} +\nabla f(x_k) = 0.
\end{equation}
We can re-write (\ref{21}) using (\ref{20}) as:
\begin{equation}
\label{22}
v_{k+1} = (1-\sqrt s - ks\;v_k^T \cdot v_k)v_k - s\nabla{f(x_{k})}\,.
\end{equation}
Now, we consider the symplectic approximation for the Lyapunov stable system \cite{hair} such that $\|\dot{x}(t)\|\rightarrow0$ as $t\rightarrow \infty$. This implies that $\|v_k\|\rightarrow0$ as $k\rightarrow\infty$. Therefore, we introduce $z_k = x_{k}-x_{k-1} = \sqrt{s}v_{k-1}$. For asymptotic analyses, there is no difference between the sequences $z_k$ and $ v_k$ asymptotically as
\begin{equation}
    \lim_{k\rightarrow \infty} \|z_{k+1}-v_k\| = \lim_{k\rightarrow \infty}\,|\sqrt{s}-1|\|v_k\| = 0.
\end{equation}
We may consider this as two transforms. First as a scaling of the system, followed by a backward recursion:
\begin{equation}
    \lim_{t\rightarrow\infty} |\sqrt{s}-1| \dot{x}(t)
\approx \max_{0<k\leq \frac{T}{\sqrt{s}}} (x(k\sqrt{s})-x((k-1)\sqrt{s})).
\end{equation}
This trick simplifies our system's updates while keeping intact the geometry of the dynamical system and does not change the global nature of the system's convergence. This particular design choice for the algorithm simplifies the computation and makes discrete-time analyses of convergence considerably easier. We finally have the consolidated scheme as follows:
\begin{equation}
\label{WGDA}
\hat{\mathcal{W}} = 
    \begin{cases}
    & x_1 = x_0 - s \nabla{f(x_0)},\\
    & z_k = x_{k}-x_{k-1},\\
    & \alpha_k = 1-\sqrt s - ks\|z_k\|^2, \\
    & z_{k+1} = \alpha_k\,z_k - s\nabla f(x_{k}),
    \end{cases}
\end{equation}
where condition (\ref{3}) holds. The whiplash method generates the momentum of the cost function to scale its next move in the discrete iteration. Figure: \ref{fig:scheme} provides a representation of the algorithm of (\ref{WGDA}).
\begin{figure}
    \centering
    \includegraphics[width = 0.3\columnwidth]{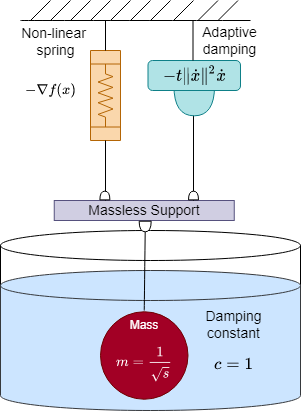}
    \caption{Physical model of the whiplash gradient descent optimization method.}
    \label{fig:model}
\end{figure} 
We can model the whiplash gradient descent as a physical system, consisting of mass $m= \frac{1}{\sqrt{s}}$, where $s>0$ (is the step-size of the algorithm), in a fluid of damping constant $c=1$ as shown in Figure: \ref{fig:model}. We can model the oracle as a spring of varying restoring force $-\nabla f(x)$ and the non-linear damping as an adaptive damper, connected by rigid inextensible links.
\begin{figure}
    \centering
    \includegraphics[width=0.35\columnwidth]{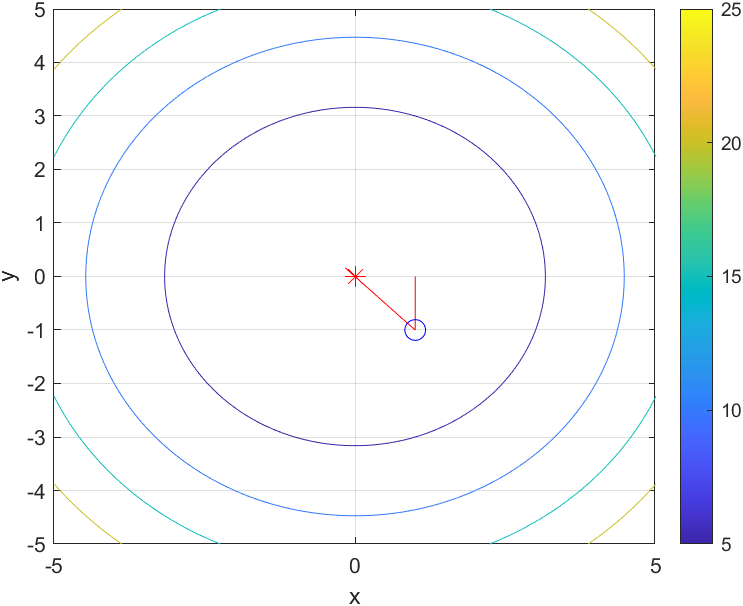}
    \includegraphics[width=0.35\columnwidth]{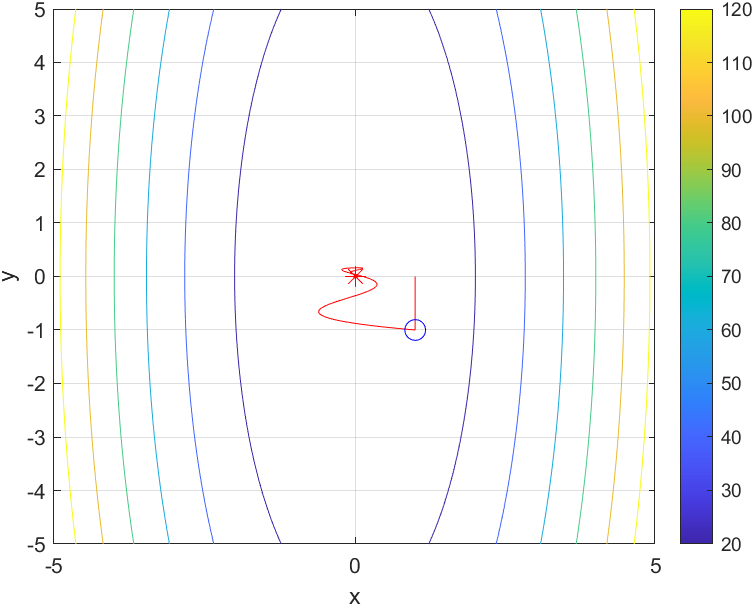}
    \includegraphics[width=0.35\columnwidth]{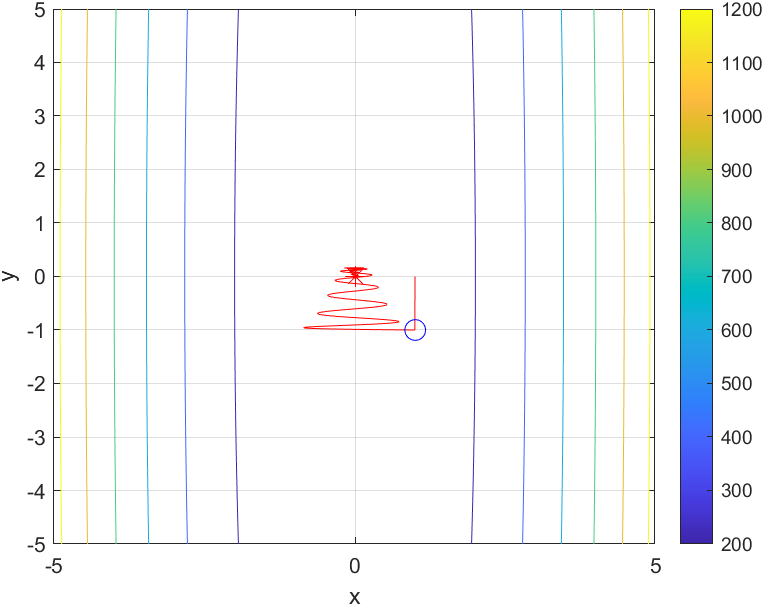}
    \includegraphics[width=0.35\columnwidth]{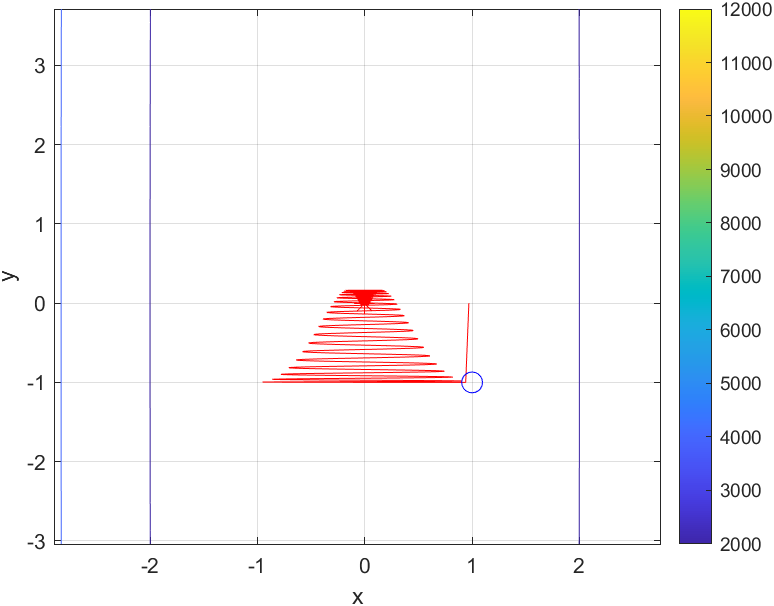}
    \caption{A study of performance of the whiplash gradient descent for increasing condition numbers: $\kappa =1$ (top left), $\kappa =10$ (top right), $\kappa =100$ (bottom left), $\kappa =1000$ (bottom right).}
    \label{fig:stud}
\end{figure}
\subsection{Naive \& Momentum-based stopping algorithms}
This discrete-time scheme (\ref{WGDA}) can be translated to the following algorithm (\ref{alg:1}) using a step-size $s$ and $n$ iterations, with initial starting point $x_0$ and final point $x_n$. Unlike classical gradient descent algorithms, this algorithm does not use any hyper-parameters. Instead, it uses a simple two-step assignment to update the discrete-time damping in every iteration. The \textit{zeroth} step of the iteration $x_1$ is a gradient descent step \cite{boyd} which assigns the initial momentum for the first iteration.
\begin{algorithm} 
\caption{Naive Whiplash algorithm}
\label{alg:1}
\SetKwInOut{init}{Initialise}
\SetKwInOut{Input}{Input}
\SetKwInOut{oracle}{Oracle}
\SetKwInOut{out}{Output}
\SetKwBlock{block}{Program}
\DontPrintSemicolon
%algorithm
\Input{$x_0, n, s$}
\KwData{$\nabla{f}$ ($1^{\text{st}}$ Order Oracle)}
\KwResult{$x_k$}
\init{$x_1 \leftarrow x_0 - s \nabla{f(x_0)}, k = 1$}
\block{
\While{$k \leq n$}{
    $z_k \leftarrow x_{k}-x_{k-1}$\;
    $\alpha_k \leftarrow 1- \sqrt s - ks(z_k^T \cdot z_k)$\;
    $x_{k+1} \leftarrow x_{k} +\alpha_k z_k - s\nabla{f(x_{k}) }$\;
    $(x_{k-1},x_{k}) \leftarrow (x_{k},x_{k+1})$\;
    $k \leftarrow k+1$\;
    }
\Return{$x_k$}\;
}
\end{algorithm}
A modified version of algorithm (\ref{alg:1}) is constructed in algorithm (\ref{alg:2}) on the basis of the momentum criterion. We introduce a small stopping value $\epsilon>0$, which allows for early termination of the algorithm, significantly reducing computation for convex functions. Note that in this pseudo-code, we have $x$ as the initial vector, $t$ as the output vector and $y$ as an intermediate variable. We discuss the motivation in Remark: \ref{z}.
\begin{algorithm} 
\caption{Whiplash algorithm with Momentum-based stopping criterion}
\label{alg:2}
\SetKwInOut{init}{Initialise}
\SetKwInOut{oracle}{Oracle}
\SetKwInOut{out}{Output}
\SetKwInOut{Input}{Input}
\SetKwBlock{block}{Program}
\DontPrintSemicolon
%algorithm
\Input{$x, s, \epsilon$}
\KwData{$\nabla{f}$ ($1^{\text{st}}$ Order Oracle)}
\KwResult{$t$}
\init{$t = x - s \nabla{f(x)},\;k = 1,\;z = t-x,\;p=z^T z$}
\block{
\While{$p\leq\epsilon$}{
    $\alpha \leftarrow 1- \sqrt s - k\cdot s\cdot p$\;
    $y \leftarrow t +\alpha \cdot z - s\nabla{f(t)}$\;
    $(x,t) \leftarrow (t,y)$\;
    $k \leftarrow k+1$\;
    $z \leftarrow t-x$\;
    $p \leftarrow z^T\cdot z$\;
    }
\Return{$t$}\;}
\end{algorithm}
However, in the absence of strictly convex criterion, the algorithm (\ref{alg:2}), would not terminate and instead go on to an infinite loop. Therefore, upon encountering saddle points, as explained in Remark: \ref{pie}, we need to use additional stopping criterion, which are based on heuristic evaluations of the class of functions under study. We discuss such a novel algorithm using the Whiplash scheme in \S \ref{explore}.
\subsection{Numerical Results}\label{num}
\begin{enumerate}
\item As explained previously, for the Rosenbrock function, for the stiffness of the system, we use a step-size of no more than $10^{-5}$. This is because the algorithm is unable to learn the gradient of the cost function and picks up momentum without correcting the damping. For a sufficiently small step-size, the whiplash gradient descent algorithm successfully found the minima of Rosenbrock's function for all initial conditions. We have shown a few examples in Figure \ref{fig:7}. This indicates that Rosenbrock's function optimization has been achieved over the given time interval. Figure \ref{fig:9} shows the nature of the trajectory, as it approaches the minima of the Rosenbrock function.

\item We analyse the performance of our algorithm for convex cost functions. The standard method of determining the algorithm's effectiveness is analysing its performance for various geometries. Note that, for unconstrained minimisation, this is a benchmark test because it allows us to analyse the tractability of the algorithm over a wide range of geometries. We perform a few experiments to test the algorithmic performance, as shown in  Figure \ref{fig:stud} using the function $f(x, y) = \frac{1}{2}(x^2+\kappa y^2)$ for the starting point $(1,-1)$, where the condition number $\kappa$ is varied.
\item In \cite{Whiplash}, we considered the utility of non-zero starting velocities for non-convex functions. We observed that, for non-convex or ill-conditioned systems, the initial velocity direction often dictates the nature of convergence. For the convex case, we observe that the direction of the starting velocity is immaterial to the nature of convergence, as shown in Figure: \ref{fig:vel}. As shown in Figure: \ref{fig:vel2}, for small starting velocities, i.e. $|v|<0.1$, the nature of the trajectory is independent of the direction of the starting velocity.
\item We made a few observations for the numerical experimentation of the naive Whiplash algorithm around Saddle points. We observed that as long as the starting point is not at the saddle point itself, it is able to escape the saddle point. It is observed for the function $J =\frac{1}{2}(x^2-y^2)$, which has a saddle point at $(0,0)$ we observed that if the algorithm is started sufficiently far from the saddle point, it does find the saddle point but escapes it. The numerical results for the saddle point experiment has been shown in figure: \ref{fig:sad}. 
\end{enumerate}
\begin{remark}\label{pie}
The disadvantage of our naive algorithm is for functions with saddle points, it is unable to terminate because the approach being fundamentally black box prevents it from storing global information which means it may oscillate around a similar neighbourhood as it has no memory of the neighbourhoods visited in the past nor can it restart automatically from a different neighborhood. To counter this issue, we introduce a restarting scheme which force the algorithms out of neighbourhoods of the saddle points, as shown in algorithm (\ref{alg:3}), introduced in the next section. The algorithm has been verified for the cost function $J$. We find not finite minima in $r = 20$ restarts, which numerically confirms that our algorithm is successful in deterministically escaping the saddle point.
\end{remark}
\begin{figure} 
        \centering
            \includegraphics[width = \columnwidth]{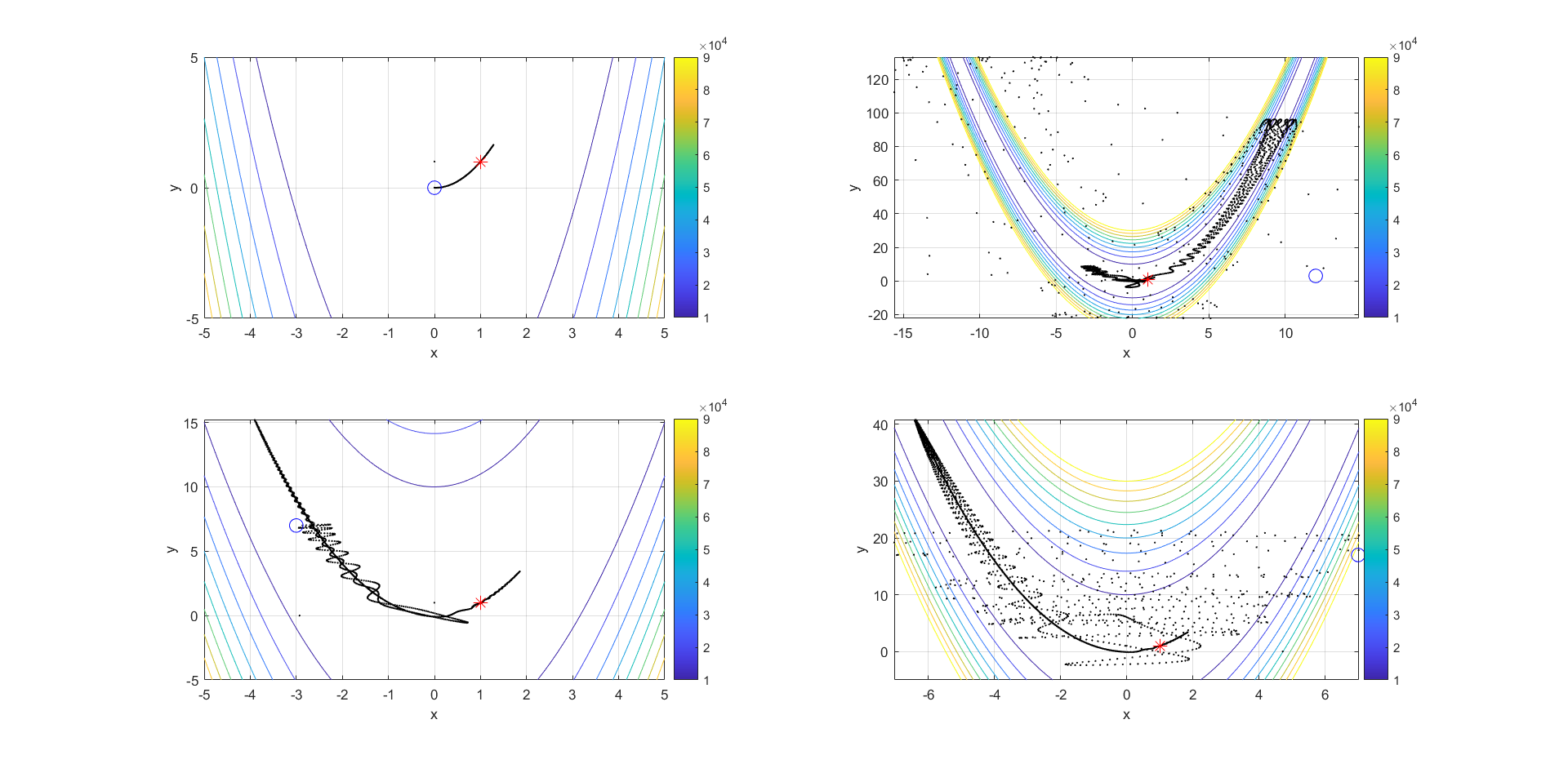}
        \caption
        {Trajectories for the Rosenbrock function optimization are dependent on initial condition: Note the blue circle denotes the starting point and the red star denotes the ending point.
        From upper left corner the initial conditions are (0,0), upper right(12,3), lower left (-3,7) and lower right (-7,17).}
        \label{fig:7}
    \end{figure}
\begin{figure}
    \centering
    \includegraphics[width = 0.75\columnwidth]{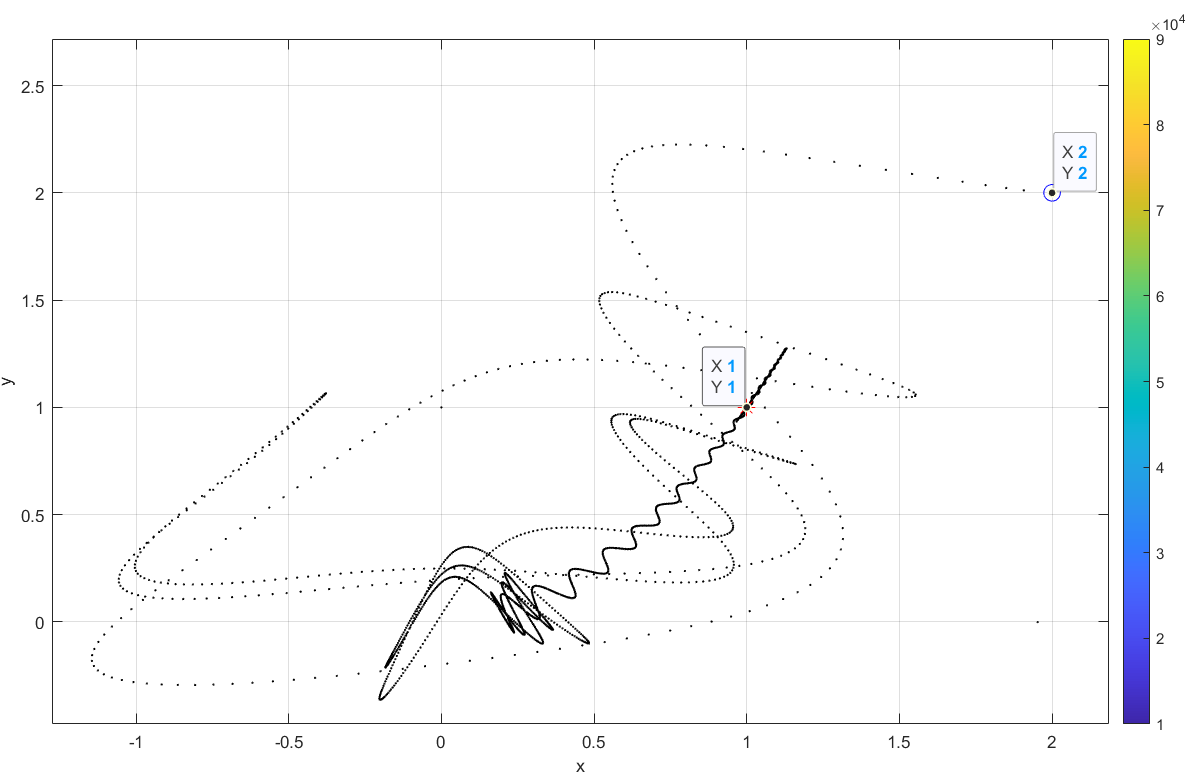}
    \caption{Zoomed Trajectory Plot with starting point (2,2).}
    \label{fig:9}
\end{figure}
\begin{figure}
    \centering
    \includegraphics[width = 0.7\columnwidth]{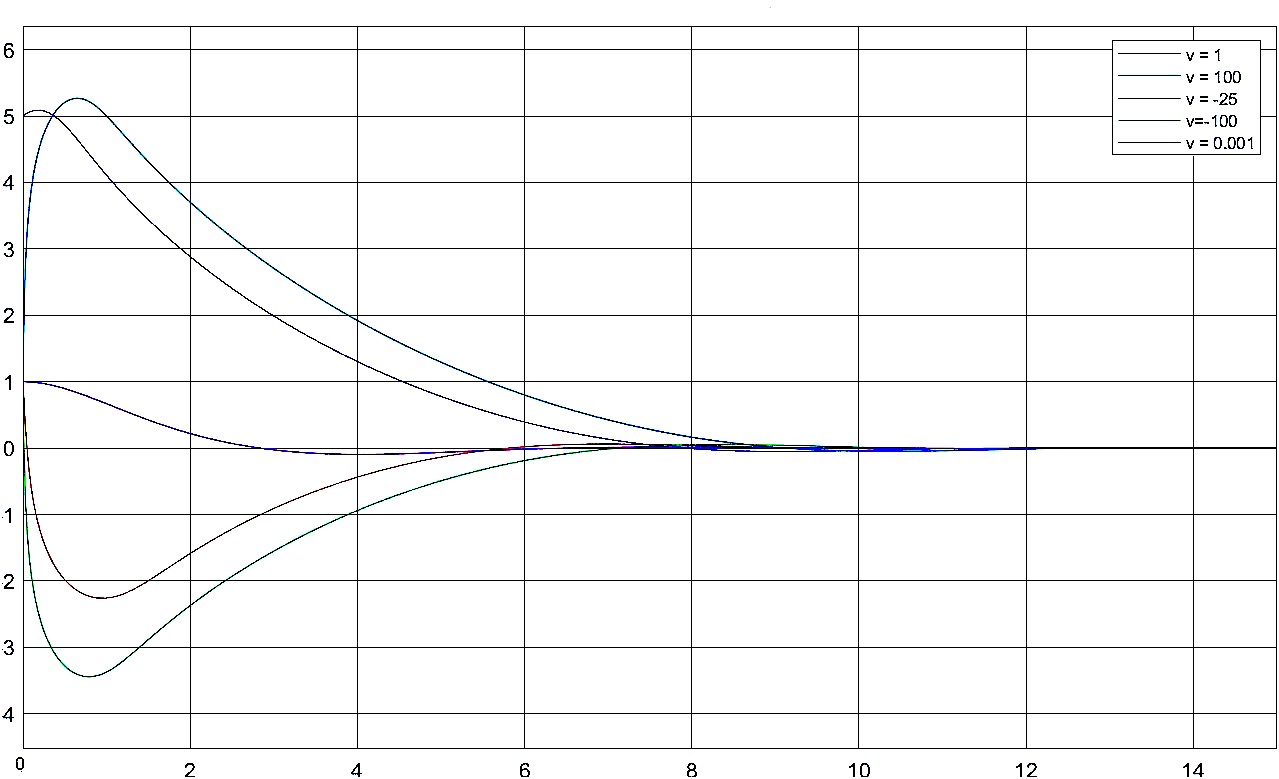}
    \caption{Study of performance of the whiplash method under various starting velocities.}
    \label{fig:vel}
\end{figure}
\begin{figure}
    \centering
    \includegraphics[width = 0.7\columnwidth]{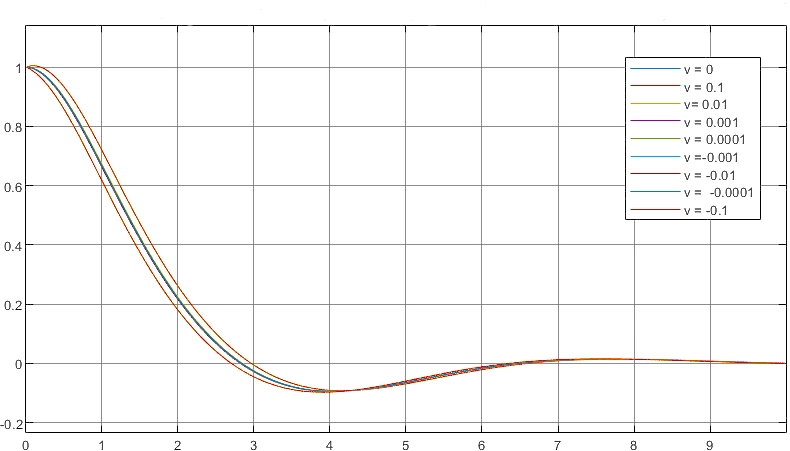}
    \caption{Trajectories for small starting velocities.}
    \label{fig:vel2}
\end{figure}
\begin{figure}
    \centering
    \includegraphics[width = 0.7\columnwidth]{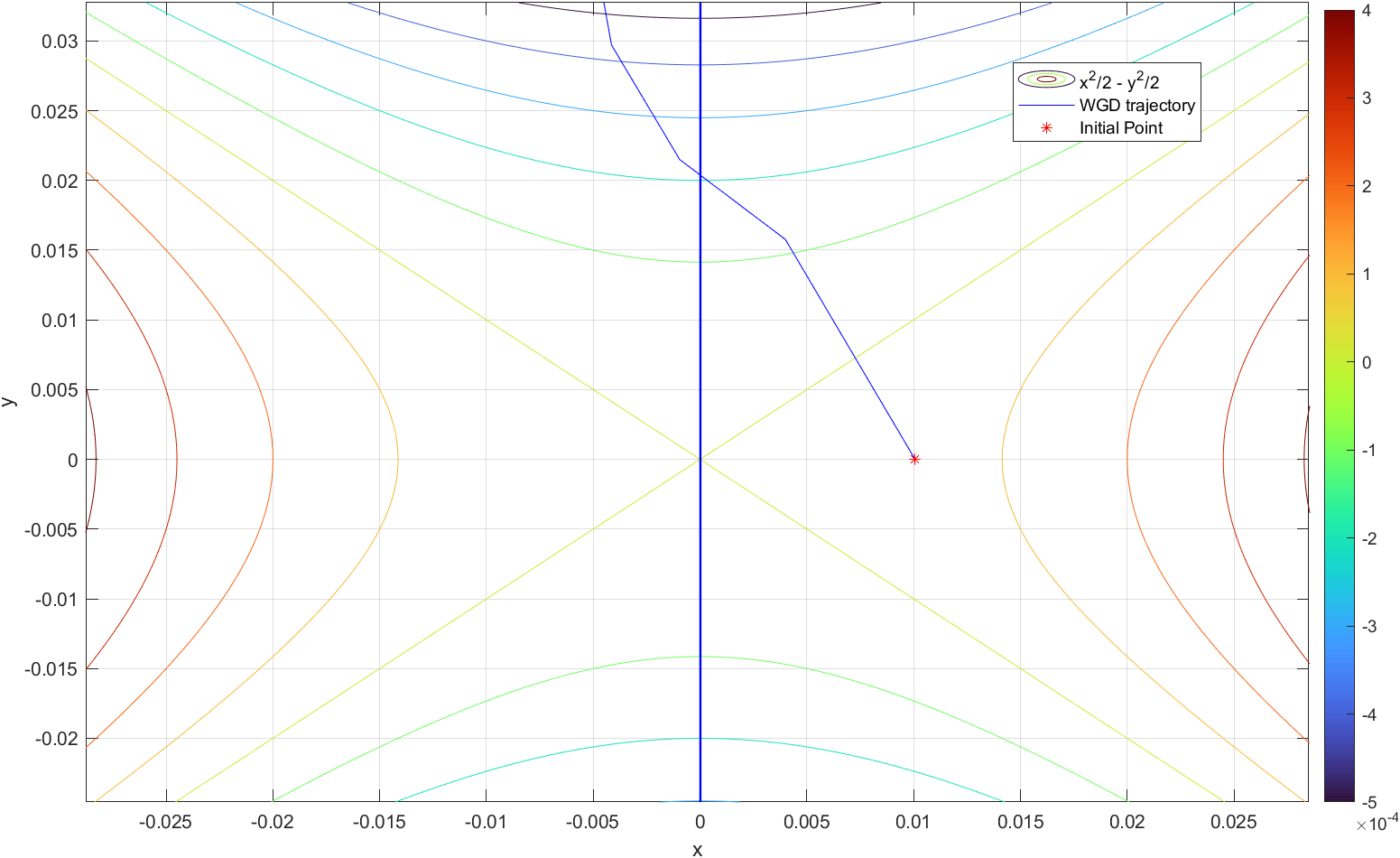}
    \caption{Numerically escaping the saddle point for the function $J = \frac{1}{2}(x^2-y^2)$ given the starting point (-0.01,-0.01).}
    \label{fig:sad}
\end{figure}
\subsection{Whiplash exploration algorithm}\label{explore}
We introduce a final algorithm (\ref{alg:3}) which terminates automatically depending on heuristic parameters. Note that we cannot guarantee finding global minimas for non-convex functions in finite-time. Hence, inspired by Global Optimization methods \cite{global}, we introduce an auto restart variable $r$ which uses the maximal radius explored by the algorithm to find another initial point. After $r$ restarts, upon $N$ maximum iterations on every restart batch, the algorithm auto-terminates. This algorithm specialises at exploring the function locally and deterministically. Hence, we have named this as \emph{Whiplash exploration algorithm}. The functional setting for the cost function is locally convex globally non-convex function with finite number of saddle points and local minimas represented by the finite set $\mathcal{X}$, with one global minima $x^*\in\mathcal{X}$, which maps as $\mathbb{R}^n\mapsto \mathbb{R}$, such that $f^* = \min_{x^*\in \mathcal{X}} f$.
The strategy we follow is as follows:
\begin{enumerate}
    \item We choose the the initial point $x_i$, step-size $s>0$, the maximum number of permitted iterations in a batch $N$, the number of restarts $r\geq0$, the exploration parameter $\lambda>1$ and the stopping criterion $\epsilon \geq 0$.
    \item Initialise the oracle as a data line for the algorithm which allows it to sequence the gradient values in a black-box format. We also use the function output as a zero-order oracle.
    \item Initialise counter variables as zero, while retaining the other initialisation and the initial direction vector $$u=[1 \dots n\; times \dots 1]^T = I^T_n.$$
    \item Initialise momentum using the gradient descent for the initial step as in the naive algorithm.
    \item Initialise the reference radius $R_r = \|t-x_i\|$ which allows the algorithm to decide whether it has started at a stationary point. Note that we are performing a trade-off. Essentially we are trading off faster convergence for exploration. The initial radius of exploration is the reference radius.
    \item If the algorithm has not restarted, we enter the iteration loop for $N$ as $k$ updates and performs the naive whiplash algorithm scheme. If $k>N$, then the algorithm restarts.
    \item The exploration radius $R_k$ is updated if it is higher than the reference radius (in the first iteration) or since the last iteration. This allows for finding the maximum radius in the batch.
    \item As the algorithm iterates, if it finds a local optima, it moves towards the minima, accurate to an error magnitude of $\epsilon$. In case the momentum does not converge, in the presence of a saddle point or ill-conditioned plains, we iterate the algorithm up to a tolerating limit $N$, after which we restart the algorithm from a different initial point.
    \item In case the momentum has converged, if the algorithm has searched sufficiently, i.e. $R>\lambda R_r$, the algorithm breaks out of the loop and finishes to return the value of the point, otherwise it restarts, as the batch must have been started near a saddle point. The choice of exploration parameter, which is the scaling magnitude $\lambda$, is determined heuristically. In the exceptional case where the algorithm restarts multiple times near local minimas, the lowest value $f(y)$, where $y\in A$ is the output. To perform this we save the value in an array $A$.
    \item To restart in the next batch if the minima is not found within the search radius, we employ the following restarting scheme:
    $$x_0 = x_i+2R\cdot \frac{\nabla^T{f}(t)\cdot u}{\|\nabla{f}(t)\|},$$ whereby we can generate a new search space at every batch using an input unit vector $u$, which can be assigned arbitrarily. Essentially, we want to prevent the search in the same $\mathbb{R}^n$ space covered in the first batch and hence, we restart algorithm at double the maximum explored radius $R$ of the initial batch. This process is visualized in figure:\ref{fig:hyper}.
    \item  The value of $u$ can be changed at every iteration, however, in our implementation, it is updated deterministically, and is given as $$u' = u+\frac{u+\left|\frac{d}{1-d}\right|^d}{\left\|u+\left|\frac{d}{1-d}\right|^d \right\|}.$$
    \item Finally, as a terminating criterion, if the algorithm has explored heuristically for the minima and has either found the minima or has exhausted restarts it makes a decision on the output. If the restarts have been exhausted the algorithm auto-terminates. If the position vector $t=\infty$ while searching for a minima, the algorithm restarts, thus speeding up the search process. In implementation we make sure that the value is not \texttt{NaN}. Hence, only if the momentum sequence has converged and the algorithm has explored sufficiently far enough, determined by the choice $\lambda$, the algorithm produces a viable output.
\end{enumerate}
\begin{algorithm}
\caption{Whiplash exploring algorithm (with Momentum-based stopping criterion and Heuristic saddle point restart)}
\label{alg:3}
\SetKwInOut{ini}{Initialise}
\SetKwInOut{oracle}{Oracle}
\SetKwInOut{out}{Output and Exit Program}
\SetKwInOut{Input}{Input}
\SetKwBlock{block}{Program}
\DontPrintSemicolon
\textbf{Input:} $x_i, s, \lambda, N, r, \epsilon$\\
\KwData{$f$ ($0^\text{th}$ Order Oracle), $\nabla{f}$ ($1^{\text{st}}$ Order Oracle)}
\KwResult{$t$}
\textbf{Initialise:} $d=k=j=0$, $u = I^T_{n}$, $t = x_i - s \nabla{f(x_i)}$, $z = t-x_i$, $p=\sqrt{z^T z}$, $R=R_r = \|t-x_i\|$ \& array $A[N]$\\
\block{
restart:\\
\If{$d >0$}
{ 
    \eIf{$d>r$}{\textbf{go to} fail}{
    $x_i \leftarrow x_i+2R\cdot \frac{\nabla^T{f}(t)\cdot u}{\|\nabla{f}(t)\|}$; 
    $u\leftarrow \frac{u+\left|\frac{d}{1-d}\right|^d}{\left\|u+\left|\frac{d}{1-d}\right|^d \right\|}$\;
    $k\leftarrow 0$\;
    }
}
\While{$k<N$}{$x\leftarrow x_i$\;
\While{true}{
    $k \leftarrow k+1$\;
    \If{$k>N$ or $\|t\|=\infty$}{$d \leftarrow d+1$\;
    \textbf{go to} restart\;}
    $\alpha \leftarrow 1- \sqrt s - k\cdot s\cdot p^2$\;
    $y \leftarrow t +\alpha \cdot z - s\nabla{f(t)}$\;
    $(x,t) \leftarrow (t,y)$\;
    $z \leftarrow t-x$\;
    $p \leftarrow \sqrt{z^T\cdot z}$\;
     $R_k \leftarrow \sqrt{(t-x_i)^T(t-x_i)}$\;
    \If{$R_k>R$}{$R=R_k$\;}
    \If{$p\leq \epsilon$ }{
    \eIf{$R\geq\lambda\cdot R_r$}{\textbf{go to} finish\;}{$A[j] = t$\;$j\leftarrow j+1$\;
    \textbf{go to} restart\;}}
    }
    }
fail: \out{No minima found in finite time for the given heuristics.}
finish:\\ \eIf{$A = null$}{\Return{$t$\;}}{$j\leftarrow 0$\;
\While{$A[j]$}
{$A[j] \leftarrow f(A[j])$\;
$j\leftarrow j+1$\;}
$t\leftarrow \min \{ \min \{A\}, f(t)\}$\;
\Return{$t$\;}
}}
\end{algorithm}
\begin{figure}
    \centering
    \includegraphics[width = 0.4\columnwidth]{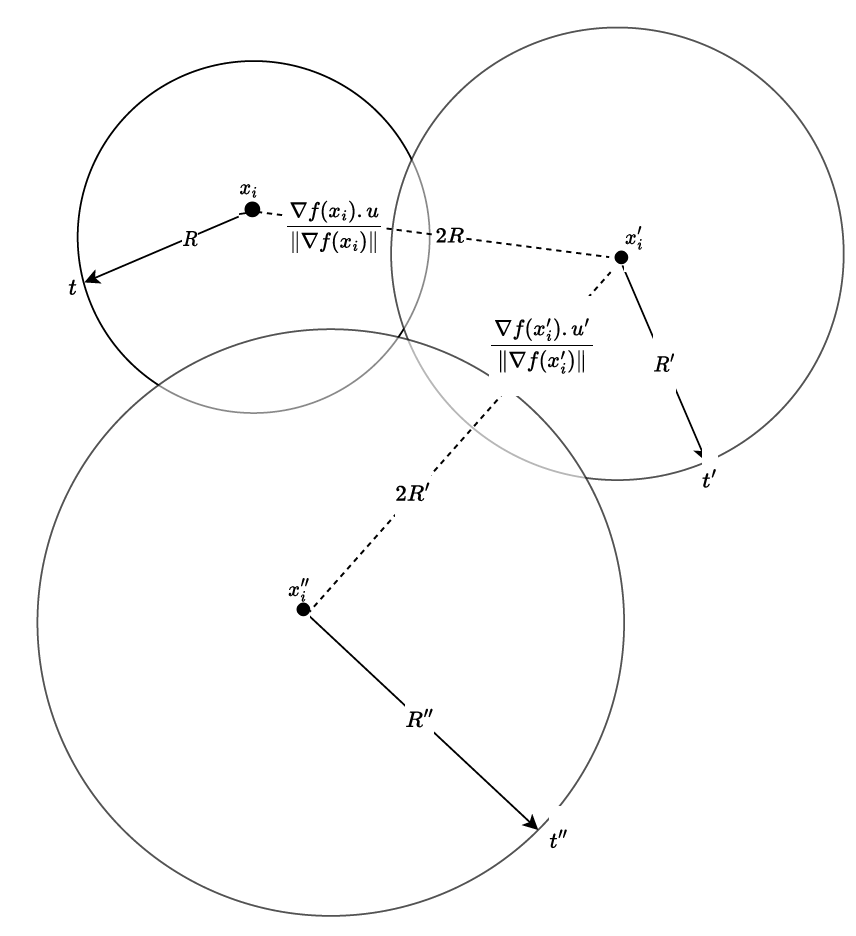}
    \caption{Evolution of the maximum radius and the restart of the Whiplash exploring algorithm with respect to the gradient descent direction.}
    \label{fig:hyper}
\end{figure}
Note that in our implementation, we make the following set of choices: $s=10^{-4}$, $\lambda=2$, $r=20$, $N=100000$ and $\epsilon=10^{-6}$. We implement this algorithm for the class $f = \frac{1}{2}\left(x^2+\kappa y^2\right)$, for various $\kappa>0$. Note that our algorithm does not adapt input heuristics. Stochastic approaches for designing the heuristic can be used in future work \cite{primer}. For example the direction vector $u$ could be a random multivariate Gaussian vector, normalised to unit magnitude. Other computational approaches like parallel-processing algorithms could be used to further speed-up the search process by exploiting local memory (cache) options within the system. 

It is important to address the fact that in case the algorithm runs out of restarts and fails to find the global minima, it produces the minimal vector of a discovered stationary point. This limitation, which might produce sub-optimal results, is heuristic-driven. Our algorithm is strictly limited to gradient information in its run-time and, therefore, cannot adapt to the geometry of the function itself nor can it verify whether the minima found is global.

Finally, it is important to address the fact that there is a probability that every restart might end up spending iterations in neighbourhood of points it has already explored. To prevent this one could always update the input $u=\frac{\nabla{f}(t)}{\|\nabla{f}(t)\|}$. However, numerically we have not found any significant difference in results. It is useful to thus consider heuristics using priori learning or systematic knowledge of the local geometries.
\section{Convergence analysis for \emph{L}-smooth convex cost functions}\label{sec4}
\subsection{Symplectic Asymptotic Convergence}
We consider the following lemmas---\ref{1}, \ref{k} and \ref{m}, which bridge from the continuous-time analysis of convergence to the discrete-time analysis using the symplectic discretisation property of asymptotic behavioural invariance. We analyse the Whiplash inertial gradient dynamical system which originates using an inverted approach of Wibisono et al.'s \cite{wibi} and Su et al.'s \cite{su2015differential} approach towards integrating discrete dynamical systems. The step-size $s$ is considered to be sufficiently small in our analysis. Instead of using conventional methods of proving sequence based convergence estimates, using the asymptotic stability of the system itself, we essentially exploit the idea \say{rate-preserving} symplectic discretisation which allows us to use the similar arguments of convergence analysis in the discrete domain. Our methodology for proving convergence exploiting Lyapunov stability theorem and symplectic discretisation instead of finding estimate sequences \cite{baes} or other discrete methods is summarised in figure: \ref{fig:g}. Thus, we elucidate a non-classical methodology of proving convergence using an indirect approach, named as \emph{symplectic asymptotic convergence} method. 
\begin{figure}  
    \centering
    \includegraphics[width = 0.65\columnwidth]{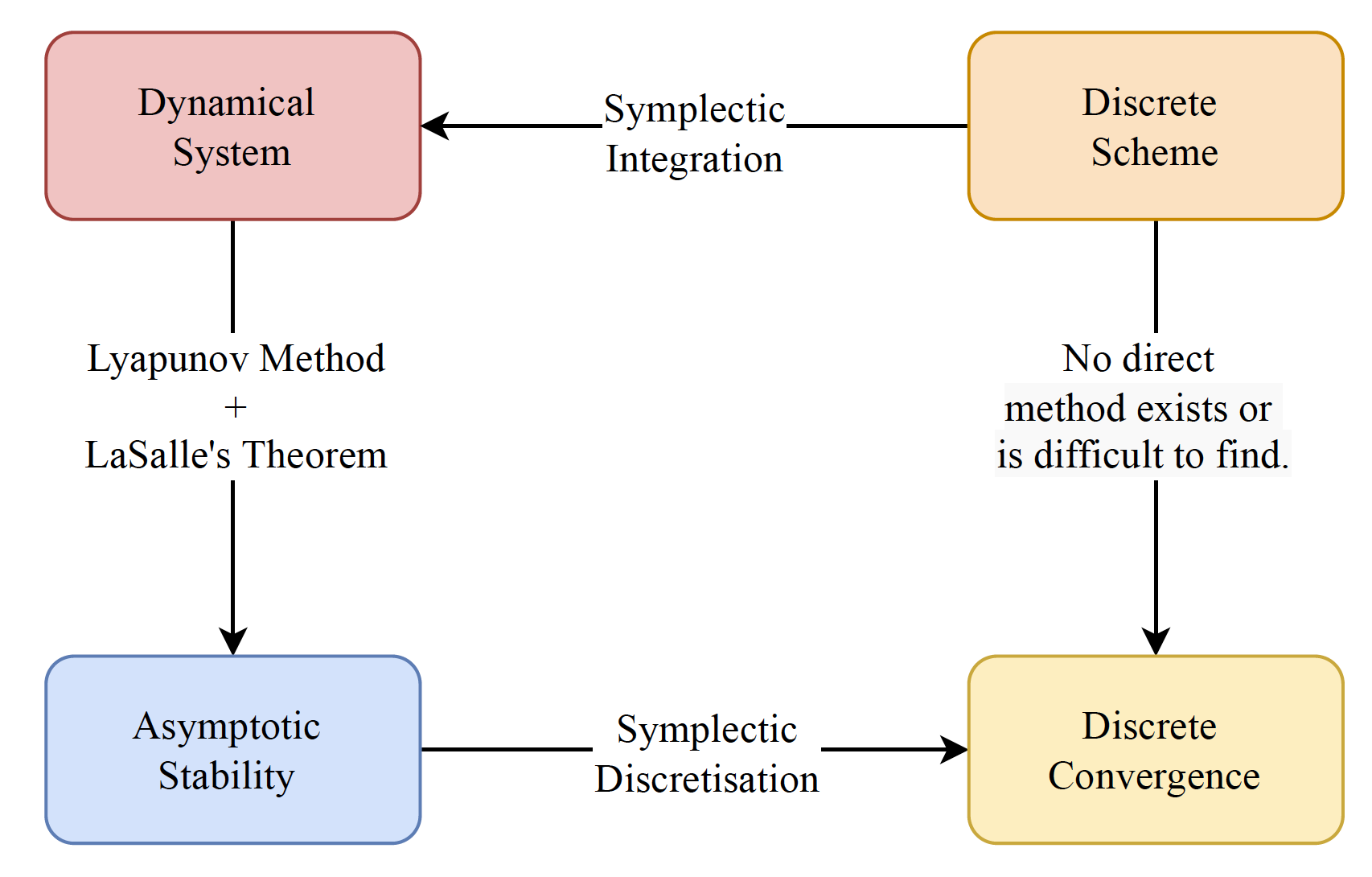}
    \caption{Symplectic Asymptotic Convergence analysis methodology}
    \label{fig:g}
\end{figure}
\begin{lemma}\label{1}
The Whiplash inertial gradient dynamical system (\ref{wp}) is globally asymptotically stable.
\end{lemma}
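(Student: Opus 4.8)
The plan is to read (\ref{wp}) as a damped mechanical system and take its total energy as a strict Lyapunov function, then promote stability to global attractivity with a LaSalle/Barbalat-type argument adapted to the time- and solution-dependent damping. Since $\nabla f$ is Lipschitz and $(t,X,V)\mapsto(1+t\|V\|^{2})V+\nabla f(X)$ is locally Lipschitz, Picard--Lindel\"of yields a unique maximal solution from any initial state, and forward global existence will follow a posteriori from the energy bound. The Lyapunov candidate is the mechanical energy
\[
\mathcal{E}(t)=\bigl(f(X(t))-f^{*}\bigr)+\tfrac12\|\dot X(t)\|^{2}\ge0,
\]
which vanishes exactly at the equilibrium $(X,\dot X)=(X^{*},0)$, where $\nabla f(X^{*})=0$.

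First I would differentiate $\mathcal{E}$ along a solution and substitute $\ddot X=-(1+t\|\dot X\|^{2})\dot X-\nabla f(X)$ from (\ref{wp}):
\[
\dot{\mathcal{E}}(t)=\langle\nabla f(X),\dot X\rangle+\langle\dot X,\ddot X\rangle=-\bigl(1+t\|\dot X\|^{2}\bigr)\|\dot X\|^{2}\le-\|\dot X\|^{2}\le0.
\]
Monotonicity of $\mathcal{E}$ gives Lyapunov stability of $(X^{*},0)$; confines every trajectory to a bounded sublevel set of the convex $f$; excludes finite-time blow-up, so solutions are global; forces $\|\dot X(t)\|\le\sqrt{2\mathcal{E}(0)}$; and, on integration, yields $\mathcal{E}(t)\downarrow\mathcal{E}_{\infty}\ge0$ together with $\int_{0}^{\infty}\|\dot X\|^{2}\,dt<\infty$, $\int_{0}^{\infty}t\|\dot X\|^{4}\,dt<\infty$, and hence, by AM--GM applied to $\|\dot X\|^{2}+t\|\dot X\|^{4}$, $\int_{0}^{\infty}\sqrt{t}\,\|\dot X\|^{3}\,dt<\infty$.

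Next I would establish $\dot X(t)\to0$: with $h=\|\dot X\|^{2}$ and $\gamma(t)=1+t\|\dot X\|^{2}\ge1$, the identity $\dot h=-2\gamma h-2\langle\dot X,\nabla f(X)\rangle$ and the boundedness of $\nabla f$ on the trajectory give $\dot h\le-2h+q$ with $q=2\|\dot X\|\,\|\nabla f(X)\|\in L^{2}$; a Gr\"onwall comparison then expresses $h(t)$ as $e^{-2t}h(0)$ plus the convolution of an integrable kernel with the $L^{2}$ forcing $q$, which vanishes as $t\to\infty$, so $h(t)\to0$ and $f(X(t))\to f^{*}+\mathcal{E}_{\infty}$. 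It remains to prove $\mathcal{E}_{\infty}=0$, equivalently $\nabla f(X(t))\to0$. If $\mathcal{E}_{\infty}>0$, the trajectory eventually stays in a compact set on which $\|\nabla f\|\ge\delta>0$ (a convex function has no critical point where its value exceeds $f^{*}$), while $\dot X(t)\to0$ keeps $X$ essentially frozen over unit-length time windows, on which $\ddot X$ is, off a thin set of times, within $\delta/2$ of the roughly constant vector $-\nabla f(X)$ of norm $\ge\delta$; this forces $\|\dot X(t+1)-\dot X(t)\|\gtrsim\delta$, contradicting $\dot X(t)\to0$. With $\nabla f(X(t))\to0$, $X$ bounded, and $f$ convex, every limit point of $X(t)$ is the minimizer, so $X(t)\to X^{*}$, which together with Lyapunov stability is global asymptotic stability.

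The step I expect to be the real obstacle is this last identification, because $\gamma(t)=1+t\|\dot X\|^{2}$ is simultaneously non-autonomous and solution-dependent: classical LaSalle invariance does not apply, $\gamma$ is unbounded above, and the frozen-window argument only goes through once one certifies that the term $t\|\dot X\|^{2}\dot X$ in $\ddot X$ is negligible on most of each window — which is exactly what the estimate $\int_{0}^{\infty}\sqrt{t}\,\|\dot X\|^{3}\,dt<\infty$ supplies, since it confines $t\|\dot X\|^{3}>\delta$ to a set of times of vanishing asymptotic density. This is also where the marginal constant $K=1$ in (\ref{wp}) does the decisive work: it guarantees $\gamma\ge1$, the lower bound driving both the $L^{2}$ velocity estimate and the Gr\"onwall step, consistent with the paper's remarks that smaller $K$ loses attenuation while larger $K$ over-damps.
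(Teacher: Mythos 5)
Your proposal starts exactly where the paper does: the same mechanical-energy Lyapunov candidate $W = \tfrac12\|\dot X\|^2 + f(X)-f^*$ and the same computation $\dot W = -(1+t\|\dot X\|^2)\|\dot X\|^2\le 0$. Where you genuinely diverge is in how the attractivity conclusion is extracted. The paper at this point simply invokes LaSalle's invariance principle together with radial unboundedness of $W$ and declares the origin globally asymptotically stable. You instead observe --- correctly --- that the damping $\gamma = 1+t\|\dot X\|^2$ is explicitly time-dependent and solution-dependent, so classical LaSalle for autonomous systems does not directly apply, and you substitute an explicit Barbalat-style argument: the integral estimates $\int\|\dot X\|^2\,dt<\infty$ and $\int t\|\dot X\|^4\,dt<\infty$ from integrating $\dot W$, a Gr\"onwall comparison to get $\dot X(t)\to 0$, and a frozen-window contradiction (using $\int\sqrt{t}\,\|\dot X\|^3\,dt<\infty$ to control the nonlinear damping term) to rule out $\mathcal{E}_\infty>0$. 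This buys you a proof that actually addresses the non-autonomy, at the cost of considerably more delicate bookkeeping; the paper's version is shorter but leans on an invariance principle outside its stated hypotheses. Two caveats you should make explicit: both your argument and the paper's need the trajectory to remain bounded, which requires $f$ to have bounded sublevel sets (coercivity) --- this is not automatic for a convex $f$ with a finite minimizer (e.g.\ $f(x,y)=x^2$ in $\mathbb{R}^2$), and you assert it as silently as the paper does; and in the frozen-window step the bound $\int\sqrt{t}\,\|\dot X\|^3\,dt<\infty$ only controls the \emph{measure-weighted} size of the bad set, so you should say explicitly that you need only infinitely many good unit windows (available since $\sum_n\epsilon_n<\infty$ forces $\liminf_n n\epsilon_n=0$), not all of them.
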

\begin{proof}
For an autonomous system of the form:
\begin{equation}\label{15}
    \dot{x} = f(x), 
\end{equation}
we can guarantee global asymptotic stability if there exists a functional, $V\,(x)$, such that:
\begin{equation} \label{16}
\begin{split}
 &  V(x)>0\;\; \forall \,x\neq 0 \\
 &  V(0)=0 \\
 &  \dot{V}(x)\,<\,0\;\; \forall\,x \neq 0\,.
\end{split}
\end{equation}
We consider the La-Salle principle of invariance \cite{Khalil} and suppose there exists a continuously differentiable, positive definite, radially unbounded function  $V(z)\,: \mathbb{R}^n \rightarrow \mathbb{R}$ such that $\forall \;z \in \mathbb{R}^n$:
\begin{equation}\label{17}
  \frac{\partial{V}}{\partial{x}}(z-x_e)\,f(z)\leq W(z) \leq 0\,.
\end{equation}
Then, $x_e$ is a Lyapunov stable equilibrium point, and the solution always exists globally. \say{Moreover, $x(t)$ converges to the largest invariant set $M$ contained in $E\,=\,\{ z \in \mathrm{R^n} \, :W(z)=0\}$. When $ W(z)=0$  only for $ z=x_{e}$ then $ E=\{x_e\}$. Since $ M \subset E$, therefore, $x(t) \rightarrow x_e $ which implies asymptotic stability. Even when $E \neq \{x_e\}$, we often have the condition  $M=\left\{x_e\right\}$ from which we can conclude asymptotic stability} \cite{Kalman1960}.
This result is used in our analysis for the general inertial gradient dynamical system \cite{ATTOUCH20175412} by defining a candidate Lyapunov function $W(t)$ for all damping functions $\gamma$ such that:
\begin{equation}\label{18}
W(t) = \frac{1}{2} ||\dot{x}||^2 + f(x) - f(x^*),
\end{equation}
where $f(x^*)$ denotes the minima of the cost function. Upon replacing the time derivative in the equation (\ref{18}) for $\ddot{X}$, we obtain:
\begin{equation}\label{19}
   \begin{split}
    &  \dot{W}(t) = \langle \dot{X},\ddot{X} \rangle + \langle \nabla(f),\dot{X}
    \rangle \\
    & = - \; \gamma ||\dot{X}||^2\; \leq 0\ \; \forall \, \gamma \geq 0\,.
    \end{split} 
\end{equation}
This relation shows that the time derivative of our Lyapunov candidate is negative and semi-definite. This condition shall suffice to show using La-Salle's principle of invariance that the set of accumulation points of any trajectory is contained in $\mathcal{I}$, where $c$ is the union of complete trajectories contained entirely in the set $\{\mathbf{x}:\dot {W}(\mathbf{x})=0 \} $\cite{Khalil}. Thus by Lyapunov's Second theorem, we have the functional $W$ is positive definite; i.e. \say{$\mathcal{I}$ contains no trajectory of the system except the trivial trajectory $\mathbf{x}(t) \equiv \mathbf{0}$ and as $W$ is radially unbounded; i.e. $W(\mathbf{x} )\rightarrow \infty$ as $\mathbf{||x||} \rightarrow \infty$}, we conclude that the origin is \textit{globally asymptotically stable} \cite{Kalman1960}. Since from (\ref{wp}), has $\gamma = (1+t\|\dot{x}\|^2)>0$, hence, it suffices the condition to be globally asymptotically stable as required.
\end{proof}

\begin{lemma}\label{k}
For the whiplash gradient descent method, the sequence $\|x_{k}-x^*\|$ converges as $k\rightarrow\infty$ for a convex cost function.
\end{lemma}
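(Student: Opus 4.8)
The plan is to prove that $\phi_k := \tfrac12\|x_k - x^*\|^2$ is a convergent real sequence; since $\phi_k \ge 0$, this is exactly the claim. I would build on two inputs. First, Lemma \ref{1} gives global asymptotic stability of the continuous Whiplash system toward $x^*$, and the semi-implicit (symplectic) Euler scheme $(\ref{WGDA})$ is rate-preserving for Lyapunov-stable systems \cite{hair}; this transfers to the discrete iterates the facts $\|z_k\| = \|x_k - x_{k-1}\| \to 0$ and $f(x_k) \to f^*$ as $k \to \infty$. Second, convexity of $f$ through the under-estimator $(\ref{bomb})$ evaluated at $y = x^*$ gives $\langle \nabla f(x_k), x_k - x^*\rangle \ge f(x_k) - f^* \ge 0$.

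First I would expand the square using the update $x_{k+1} = x_k + z_{k+1}$ with $z_{k+1} = \alpha_k z_k - s\nabla f(x_k)$ from $(\ref{WGDA})$. Writing $a_k = x_k - x^*$ and using $z_k = a_k - a_{k-1}$ together with the polarization identity $\langle z_k, a_k\rangle = \phi_k - \phi_{k-1} + \tfrac12\|z_k\|^2$, a short computation turns $\phi_{k+1} - \phi_k = \langle a_k, z_{k+1}\rangle + \tfrac12\|z_{k+1}\|^2$ into a second-order recursion for the increments $\delta_k := \phi_k - \phi_{k-1}$,
\begin{equation}
\delta_{k+1} = \alpha_k\,\delta_k \;+\; \tfrac12\alpha_k\|z_k\|^2 + \tfrac12\|z_{k+1}\|^2 \;-\; s\langle \nabla f(x_k), x_k - x^*\rangle .
\end{equation}
Here the non-classical character of the method enters: the coefficient $\alpha_k = 1 - \sqrt s - k s\|z_k\|^2$ is not a contraction factor, so one must first argue that it stays bounded (and eventually lies in $(-1,1)$) by controlling $k\|z_k\|^2$ — which the discrete counterpart of the continuous damping $1 + t\|\dot x\|^2$ keeps finite along the trajectory — and one must also note that $s\|\nabla f(x_k)\| = \|\alpha_k z_k - z_{k+1}\| \le |\alpha_k|\,\|z_k\| + \|z_{k+1}\| \to 0$.

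With those bounds in hand, $\tfrac12\alpha_k\|z_k\|^2 + \tfrac12\|z_{k+1}\|^2 \to 0$ and the nonpositive term $-s\langle\nabla f(x_k), x_k-x^*\rangle$ also tends to $0$, so the recursion reads $\delta_{k+1} = \alpha_k\delta_k + o(1)$ with a nonpositive forcing, from which a discrete Opial/Fej\'er-type argument — using $\phi_k \ge 0$ to rule out $\phi_k \to -\infty$ — forces $\liminf_k \phi_k = \limsup_k \phi_k$, i.e. convergence of $\{\phi_k\}$ and hence of $\{\|x_k - x^*\|\}$. The main obstacle is precisely quantifying the decay of $\|z_k\|$ finely enough — controlling $k\|z_k\|^2$ and the summability of the forcing terms — so that the increment recursion closes; this is exactly where the symplectic asymptotic-invariance bridge from Lemma \ref{1}, rather than a direct discrete estimate, does the heavy lifting. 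If a fully self-contained discrete estimate proves too delicate, a shorter fallback is to invoke the rate-preserving property of $(\ref{WGDA})$ to transfer the conclusion $x(t) \to x^*$ of Lemma \ref{1} directly to $x_k \to x^*$, whence $\|x_k - x^*\| \to 0$ and the sequence converges trivially.
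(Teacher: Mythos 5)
Your ``fallback'' is, in fact, the paper's entire proof: Lemma \ref{1} gives $x(t)\rightarrow x^*$ for the continuous dynamics, and the paper then invokes the rate-preserving property of the symplectic Euler scheme (asserting $\max_{0<k\leq T/\sqrt{s}}\|x_k-x(k\sqrt{s})\|=0$) to conclude $x_k\rightarrow x^*$, hence convergence of $\|x_k-x^*\|$ (to zero, in fact). So the short route you relegate to a last resort is exactly the argument the paper intends, and on that route your proposal matches the paper.

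Your primary route --- the increment recursion for $\delta_k=\phi_k-\phi_{k-1}$ --- is a genuinely different, more self-contained discrete attempt, but as stated it does not close, and the gap is at the final step rather than in the algebra (which is correct). From $\delta_{k+1}=\alpha_k\delta_k+o(1)$ with $|\alpha_k|\leq\rho<1$ eventually, you may conclude $\delta_k\rightarrow 0$; but $\phi_k-\phi_{k-1}\rightarrow 0$ does not imply that $\{\phi_k\}$ converges (consider $\phi_k=\sin(\log k)$ shifted to be nonnegative, or $\phi_k=\log k$ for divergence to $+\infty$ despite vanishing increments). A quasi-Fej\'er argument needs the positive part of the forcing to be \emph{summable}, i.e.\ essentially $\sum_k\|z_k\|^2<\infty$, which is strictly stronger than $\|z_k\|\rightarrow 0$; the only rate the paper establishes is $\|z_k\|=o(k^{-1/3})$ (Theorem \ref{t1}), which is not square-summable, and that theorem in any case already uses Lemmas \ref{k} and \ref{m}, so importing it here would be circular. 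The same circularity risk attaches to your control of $k\|z_k\|^2$: boundedness of the continuous damping $1+t\|\dot{x}\|^2$ along trajectories requires $\|\dot{x}(t)\|=\mathcal{O}(t^{-1/2})$, which does not follow from the Lyapunov decrease in Lemma \ref{1} alone. If you want the discrete argument to stand on its own, you must either prove summability of $\|z_k\|^2$ directly from (\ref{WGDA}) or accept that the symplectic transfer is doing all the work --- in which case the two-line fallback is the proof.
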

\begin{proof}
The system is globally asymptotically stable for the whiplash gradient descent, as shown in Lemma 1. We know that for the symplectic \say{rate preserving} Euler scheme \cite{hair}, the asymptotic nature of the inertial gradient system remains intact within the sampled time-domain $T>0$; i.e.
\begin{equation}
    \max _{0 < k \leq \frac{T}{\sqrt{s}}} \|x_k-x(k\sqrt{s})\| = 0.
\end{equation}
This means that as $\|x(t)-x^*\|$ is asymptotically convergent, $\|x(k \sqrt{s})-x^*\|$ is asymptotically convergent, as $k\rightarrow\infty$. Then it follows that, for a step-size $0<s\leq 1$, the following must hold
\begin{equation}
    \lim_{k\rightarrow\infty} x_k = x^*.
    \label{e1}
\end{equation}
Thus, the sequence $\|x_{k}-x^*\|$ must be convergent, as required.
\end{proof}

\begin{lemma}\label{m}
The momentum sequence $\|z_k\|$ of the whiplash gradient descent method is bounded and convergent.
\end{lemma}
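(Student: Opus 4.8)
The plan is to leverage the two preceding lemmas so that the momentum sequence $\|z_k\|$ inherits its good behaviour from the asymptotic stability of the continuous-time flow (\ref{wp}). Recall that $z_k = x_k - x_{k-1} = \sqrt{s}\,v_{k-1}$, and that by Lemma \ref{1} the whiplash system is globally asymptotically stable, so that along every trajectory $\|\dot{x}(t)\| \to 0$ as $t\to\infty$. By the rate-preserving property of the symplectic Euler discretisation used in Lemma \ref{k} — namely $\max_{0<k\le T/\sqrt{s}}\|x_k - x(k\sqrt{s})\| = 0$ in the asymptotic sense — the discrete velocity $v_k$ tracks $\dot{x}(k\sqrt{s})$, and hence $\|v_k\| \to 0$ as $k\to\infty$. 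Since $\|z_{k+1}\| = \sqrt{s}\,\|v_k\|$, this immediately gives $\|z_k\| \to 0$, which establishes convergence of the momentum sequence (to $0$).

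First I would spell out the boundedness claim, which is really a corollary of convergence: any convergent real sequence is bounded, so once $\|z_k\| \to 0$ is in hand there exists $M>0$ with $\|z_k\| \le M$ for all $k$. For completeness I would also give the direct argument from Lemma \ref{k}: since $x_k \to x^*$, the sequence $\{x_k\}$ is bounded, say $\|x_k - x^*\| \le B$, and then $\|z_k\| = \|x_k - x_{k-1}\| \le \|x_k - x^*\| + \|x^* - x_{k-1}\| \le 2B$, giving boundedness without any appeal to the continuous flow. The convergence statement $\lim_k \|z_k\| = 0$ then follows from $\|z_k\| = \|x_k - x_{k-1}\|$ and the fact that a convergent sequence is Cauchy: $\|x_k - x_{k-1}\| \to 0$.

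The main obstacle I anticipate is making the transfer from $\|\dot{x}(t)\|\to 0$ to $\|v_k\|\to 0$ rigorous, because the symplectic-invariance statement as quoted controls the \emph{positions} $x_k$ versus $x(k\sqrt{s})$ on a fixed finite window $[0,T]$, not the velocities on an infinite horizon. The cleanest route, and the one I would take, is therefore to bypass the velocity entirely and argue purely in discrete terms: Lemma \ref{k} already gives $x_k \to x^*$, hence $\{x_k\}$ is Cauchy, hence $\|z_k\| = \|x_{k}-x_{k-1}\| \to 0$, hence $\{z_k\}$ is bounded and convergent. This keeps the proof short and self-contained, using only the conclusion of Lemma \ref{k} rather than re-deriving anything about the flow. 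If one instead wants the velocity-based picture (to connect with the physical model), one would need to additionally invoke that the symplectic scheme preserves the asymptotic decay of $\dot{x}$, which is plausible from \cite{hair} but would require the step-size smallness hypothesis to be used more carefully.
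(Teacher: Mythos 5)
Your proposal is correct, and your preferred route is genuinely different from (and cleaner than) the paper's. The paper proves boundedness by appealing to global asymptotic stability of the flow ($\|x(t)\|<\infty$, hence $\|x_k-x^*\|$ bounded, hence $\|z_k\|$ bounded) and proves convergence by taking the continuous-time fact $\lim_{t\to\infty}\|\dot{x}(t)\|=0$ from Lemma \ref{1} and transferring it to $\lim_{k\to\infty}\|z_k\|=0$ via the symplectic identification $\dot{x}\approx (x_{k+1}-x_k)/\sqrt{s}$ --- exactly the velocity-based picture you describe and then (rightly) flag as delicate, since the quoted invariance property controls positions on a finite window rather than velocities on an infinite horizon. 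Your alternative bypasses the flow entirely: Lemma \ref{k} gives $x_k\to x^*$, so $\{x_k\}$ is Cauchy, so $\|z_k\|=\|x_k-x_{k-1}\|\le\|x_k-x^*\|+\|x_{k-1}-x^*\|\to 0$, and boundedness is then immediate. This is shorter, self-contained, and avoids the one genuinely questionable step in the paper's argument; what it gives up is only the physical interpretation of $z_k$ as a sampled velocity, which the paper's version makes explicit. Either way the lemma stands or falls with Lemma \ref{k}, on which both arguments ultimately rest.
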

 
\begin{proof}
In lemma \ref{1}, using LaSalle's invariance principle, we proved that the system (\ref{wp}) is globally asymptotically stable. This implies that there exists no trajectory within the set $\mathcal{I}$, which is unbounded, and hence, it follows that $\|x(t)\|<\infty$ which also implies from lemma \ref{k} that the sequence $\|x_k-x^*\|$ is bounded for every $k>0$. It implies thus that $\|z_k\| = \|x_k-x_{k-1}\|$ is bounded for every $k>0$. From lemma \ref{1}, we also know that 
\begin{equation}
\lim_{t\rightarrow\infty}\|\dot{x}(t)\| = 0.\label{lip}
\end{equation} 
Applying the symplectic discretisation of $\dot{x}(t) = \frac{x_{k+1}-x_{k}}{\sqrt{s}}$, we obtain for a sufficiently small $s>0$,
\begin{equation}
 \lim_{k\rightarrow\infty} \|z_k\| = 0,
    \label{e2}
\end{equation}
as required.
\end{proof}
\begin{lemma}\label{jp}
 The inner product sequence: $\langle x_{k}-x^*,z_k\rangle$ converges as $k\rightarrow \infty$.
\end{lemma}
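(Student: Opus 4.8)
The plan is to bound the inner product directly by the product of the two norms and then invoke the two preceding lemmas. First I would apply the Cauchy--Schwarz inequality to obtain
\begin{equation}
  |\langle x_k - x^*,\, z_k\rangle| \;\leq\; \|x_k - x^*\|\,\|z_k\|
\end{equation}
for every $k>0$; this is the only analytic input needed, and it is the inequality already cited as available in the preliminaries.

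Next, I would recall from Lemma \ref{k} that $\|x_k - x^*\|$ converges as $k\rightarrow\infty$ (in fact $x_k\rightarrow x^*$), so in particular the sequence $\bigl(\|x_k - x^*\|\bigr)_k$ is bounded by some constant $M<\infty$. From Lemma \ref{m} the momentum sequence satisfies $\|z_k\|\rightarrow 0$ as $k\rightarrow\infty$. Combining these with the Cauchy--Schwarz bound yields $0 \leq |\langle x_k - x^*,\, z_k\rangle| \leq M\|z_k\| \rightarrow 0$, hence $\langle x_k - x^*,\, z_k\rangle \rightarrow 0$. Since a sequence converging to a limit is in particular convergent, this establishes the claim, and it simultaneously identifies the limit as $0$, which is worth recording for later use.

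There is no genuine obstacle here: the statement is a direct corollary of Lemmas \ref{k} and \ref{m}, and even the weaker pair ``$\|x_k-x^*\|$ bounded'' and ``$\|z_k\|\rightarrow 0$'' suffices. The one point I would make explicit is that boundedness of $\|x_k-x^*\|$ (which traces back to the global asymptotic stability of Lemma \ref{1}, via its use in Lemmas \ref{k} and \ref{m}) is the property actually being consumed, rather than full convergence; I would phrase the argument so that it remains valid under this minimal hypothesis, since this lemma is presumably invoked later as an ingredient in a Lyapunov-type telescoping estimate where only boundedness of the iterates and the vanishing of the momentum are guaranteed.
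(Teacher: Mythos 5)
Your proposal is correct and follows essentially the same route as the paper: apply the Cauchy--Schwarz inequality to bound $|\langle x_k-x^*,z_k\rangle|$ by $\|x_k-x^*\|\,\|z_k\|$, then invoke Lemma \ref{k} and Lemma \ref{m} to squeeze the inner product to $0$. Your observation that only boundedness of $\|x_k-x^*\|$ (not full convergence) is consumed is a clean refinement, and your write-up of the Cauchy--Schwarz step is actually tidier than the paper's displayed version, which misplaces the squares inside the sums.
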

\begin{proof}
Let the sequence $x_k-x^* = p_k$. Then, using Cauchy-Schwarz inequality for finite dimensional vector spaces, we have
\begin{equation*}
   \langle p_k,z_k \rangle = \sum_{i=0}^{n} p^i_k z^i_k \leq \left(\sum_{i=0}^{n}p_k^i\right)^{\frac{1}{2}} \left(\sum_{i=0}^{n}z_k^i\right)^{\frac{1}{2}}=\|p_k\|\|z_k\|.
\end{equation*}
Now applying the limits using the Cauchy-Schwarz inequality, we have
\begin{equation*}
    -\lim_{k\rightarrow \infty} \|p_k\| \lim_{k\rightarrow \infty}\|z_k\|\leq\; \lim_{k\rightarrow \infty} \langle p_k,z_k\rangle\;\leq \lim_{k\rightarrow \infty} \|p_k\| \lim_{k\rightarrow \infty}\|z_k\|,
\end{equation*}
which implies using lemmas \ref{k} and \ref{m},
\begin{equation}
     \lim_{k\rightarrow \infty} \langle p_k,z_k\rangle = 0,
\end{equation}
as required.
\end{proof}
\begin{theorem}\label{t1}
The momentum sequence of the whiplash gradient descent $\|z_k\|$ converges at the rate $\mathcal{O}\left(\frac{1}{\sqrt[3]{k}}\right)$ for an $L$-smooth convex cost function.
\end{theorem}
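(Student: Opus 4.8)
The plan is to prove the statement by contradiction, exploiting the closed-loop structure of the damping rather than trying to read the rate off a Lyapunov sum. The mechanism is a genuine ``whiplash'' effect: in the update $z_{k+1}=\alpha_k z_k - s\nabla f(x_k)$ with $\alpha_k = 1-\sqrt{s}-ks\|z_k\|^2$, an over-large momentum $\|z_k\|$ makes $|\alpha_k|$ large, which forces $\|z_{k+1}\|$ to be of order $ks\|z_k\|^3$; the exponent $\tfrac{1}{3}$ is precisely the threshold at which $k\|z_k\|^3$ fails to vanish, so ``$\|z_k\|\ge Ck^{-1/3}$ infinitely often'' would push the momentum back up by a fixed positive amount, contradicting $\|z_k\|\to 0$ from Lemma \ref{m}.

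First I would collect the inputs from the earlier lemmas. Lemma \ref{m} gives $\|z_k\|\to 0$; Lemma \ref{k} gives $x_k\to x^*$, hence $\{x_k\}$ is bounded, and $L$-smoothness then yields $\|\nabla f(x_k)\|=\|\nabla f(x_k)-\nabla f(x^*)\|\le L\|x_k-x^*\|\le G$ for some finite $G$ (in fact $\|\nabla f(x_k)\|\to 0$, which would sharpen the conclusion to $o(1/\sqrt[3]{k})$, but boundedness suffices here). Applying the reverse triangle inequality to the recursion, for every $k$,
\[
\|z_{k+1}\| \;\ge\; |\alpha_k|\,\|z_k\| - s\|\nabla f(x_k)\| \;\ge\; ks\|z_k\|^3 - \|z_k\| - sG,
\]
using $|\alpha_k|\ge ks\|z_k\|^2-1$ together with $\|z_k\|\ge 0$ and $\|\nabla f(x_k)\|\le G$.

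Then I would fix a constant $C>0$ large enough that $sC^3-1-sG\ge 1$, and suppose, for contradiction, that $\|z_k\|\ge Ck^{-1/3}$ for infinitely many $k$. For all sufficiently large such $k$ we have $ks\|z_k\|^2\ge sC^2k^{1/3}>1$, $\|z_k\|<1$ (because $\|z_k\|\to 0$), and $ks\|z_k\|^3\ge sC^3$ (because $\|z_k\|^3\ge C^3/k$); the displayed bound then gives $\|z_{k+1}\|\ge sC^3-1-sG\ge 1$ for infinitely many indices, contradicting $\|z_k\|\to 0$. Hence $\|z_k\|<Ck^{-1/3}$ for all sufficiently large $k$, i.e. $\varlimsup_{k\to\infty}\sqrt[3]{k}\,\|z_k\|\le C<\infty$, which is the claimed $\mathcal{O}(1/\sqrt[3]{k})$ bound.

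The step needing the most care --- the main obstacle, modest though it is --- is the order of quantifiers: $C$ must be chosen first, depending only on $s$ and $G$, and only afterwards the index threshold beyond which $\|z_k\|<1$ and $ks\|z_k\|^2>1$, so that the argument is not circular; one should also verify the elementary bound $|\alpha_k|\ge ks\|z_k\|^2-1$ holds unconditionally. It is worth recording why the ``soft'' route fails: the Lyapunov decay only supplies $\sum_k k\|z_k\|^4<\infty$ (and $\sum_k\|z_k\|^2<\infty$), which does not by itself yield a pointwise $k^{-1/3}$ rate absent a monotonicity assumption on $\|z_k\|$, so the feedback/contradiction argument above is the natural one.
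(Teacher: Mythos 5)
Your proposal is correct and is essentially the paper's own argument: your reverse-triangle-inequality bound $\|z_{k+1}\|\ge ks\|z_k\|^3-\|z_k\|-sG$ is, after rearrangement, exactly the paper's inequality $k\|z_k\|^3\le L\|z_{k+1}\|+L'\|z_k\|+\|\nabla f(x_k)\|$ obtained by isolating the cubic term in the update, and both arguments conclude by observing that the right-hand side is vanishing (the paper) or bounded (you). The contradiction wrapper is an unnecessary detour, since your displayed inequality already gives $\sup_k k\,s\|z_k\|^3<\infty$ directly, and, as you yourself note, feeding in $\|\nabla f(x_k)\|\to 0$ upgrades the conclusion to the $o\left(1/\sqrt[3]{k}\right)$ rate that the paper's proof actually delivers.
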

\begin{proof}
From (\ref{WGDA}), using $s= \frac{1}{L}$, we obtain
\begin{gather*}  
    k\|z_k\|^2 z_k = -\left(L z_{k+1}-(L-\sqrt{L})z_k\right)-\nabla{f}(x_k).
\end{gather*}  
Taking the norm on both sides and applying the triangle inequality, we obtain
\begin{equation*}
    k\|z_k\|^3 \leq L\|z_{k+1}\|+ L' \|z_k\|+\|\nabla{f}(x_k)\|,
\end{equation*}
where $L' = (L-\sqrt{L})$. Now, we take limits on both sides of the inequality to obtain
\begin{equation}\label{ini}
    \lim_{k\rightarrow\infty} k\|z_k\|^3\leq\lim_{k\rightarrow\infty}\left(L\|z_{k+1}\|+ L' \|z_k\|+\|\nabla{f}(x_k)\|\right).
\end{equation}
Using (\ref{e2}), we simplify using the property of subadditivity, the above inequality to
\begin{equation}\label{om}
    0\leq\lim_{k\rightarrow\infty} k\|z_k\|^3\leq\lim_{k\rightarrow\infty}\|\nabla{f}(x_k)\|.
\end{equation}
Now, using (\ref{e1}) for a Lipschitz continuous function $f(x)$, we obtain
\begin{equation*}
    0\leq\lim_{k\rightarrow\infty} k\|z_k\|^3\leq\|\nabla{f}(x^*)\|.
\end{equation*}
Since $\|\nabla{f}(x^*)\| = 0$, we obtain
\begin{equation}
    \label{e3}
    \lim_{k\rightarrow\infty} k\|z_k\|^3 = 0,
\end{equation}
which implies that the sequence $\|z_k\|$ converges at the rate of $o\left(\frac{1}{\sqrt[3]{k}}\right)$ as required. 
\end{proof}
\begin{remark} \label{z}
Note that we have a strict convergence rate for the momentum sequence, which is not the case for the sequence $\|x_k-x^*\|$ itself, i.e. the system slows down as it reaches the minima, which allows us to design a stopping criterion for convex functions with respect to the momentum of the system. In algorithm (\ref{alg:1}), instead of the naive approach for iterating the algorithm we could use $\|z_k\|\leq\epsilon$ as the stopping criterion, where $\epsilon\geq0$ is a small value. An early stopping could significantly reduce computations as per the requirements of the subroutine in which the algorithm is deployed, as shown in algorithm:\ref{alg:2}.
\end{remark}
\begin{corollary}
The sequence $|\alpha_k|$ of the Whiplash descent method converges to $\left(1-\frac{1}{\sqrt{L}}\right)$ as $k\rightarrow\infty$ for any $L$-smooth convex cost functions.
\end{corollary}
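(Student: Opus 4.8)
Since the admissible choice is $s=\tfrac1L$, the last line of (\ref{WGDA}) reads $\alpha_k = 1-\tfrac{1}{\sqrt L}-\tfrac{k}{L}\|z_k\|^2$, where $\tfrac{k}{L}\|z_k\|^2\ge 0$ and (because the paper also imposes $s=\tfrac1L\le 1$, i.e. $L\ge 1$) the target limit $1-\tfrac1{\sqrt L}$ is nonnegative. Hence, by continuity of $|\cdot|$, the whole corollary reduces to the single statement $\lim_{k\to\infty} k\|z_k\|^2 = 0$, after which $\alpha_k\to 1-\tfrac1{\sqrt L}$ and $|\alpha_k|$ inherits the same limit (and, in passing, $\alpha_k$ is eventually nonnegative). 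So the plan is to prove $k\|z_k\|^2\to 0$. Note this is genuinely stronger than Theorem \ref{t1}: that result only gives $k\|z_k\|^3\to 0$, i.e. $\|z_k\|=o(k^{-1/3})$, whereas here we need $\|z_k\|=o(k^{-1/2})$.

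The route I would take is to sharpen the bookkeeping behind Lemmas \ref{m}--\ref{k} and Theorem \ref{t1} using the Lyapunov structure of Lemma \ref{1}. There, $\dot W(t) = -(1+t\|\dot x\|^2)\|\dot x\|^2$ with $W\ge 0$, so $W$ converges and $\int_0^\infty (1+t\|\dot x\|^2)\|\dot x\|^2\,dt<\infty$; in particular $\int_0^\infty\|\dot x\|^2\,dt<\infty$ and $\int_0^\infty t\|\dot x\|^4\,dt<\infty$. Moreover, setting $u(t)=\tfrac t2\|\dot x(t)\|^2$ and differentiating along the Whiplash flow yields the Riccati-type identity $\dot u = -4u^2-2u+\tfrac{u}{t}-t\tfrac{d}{dt}\bigl(f(x)-f^*\bigr)$, whose strongly stabilising $-4u^2$ term, combined with the integrability above, I would use to conclude $u(t)\to 0$, i.e. $t\|\dot x(t)\|^2\to 0$. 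Under the rate-preserving symplectic correspondence $\dot x(k\sqrt s)\approx z_k/\sqrt s$, $t\approx k\sqrt s$ used throughout Lemmas \ref{k}--\ref{m}, this transfers to $k\|z_k\|^2\to 0$, which is exactly what is needed. (A purely discrete alternative is to re-run the estimate in the proof of Theorem \ref{t1}, now bounding $\|\nabla f(x_k)\|\le L\|x_k-x^*\|$ and tying $\|x_k-x^*\|$ to the cost decrease through (\ref{ls})--(\ref{bomb}), so as to push the exponent from $k^{-1/3}$ down to $k^{-1/2}$.)

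The main obstacle is precisely this upgrade from $o(k^{-1/3})$ to $o(k^{-1/2})$: one has to control the transient phase in which $k\|z_k\|^2$ (equivalently $t\|\dot x(t)\|^2$) is not yet small — the regime where the nonlinear damping is active and $|\alpha_k|$ could a priori exceed $1$. I would dispose of this using the global asymptotic stability of Lemma \ref{1} (all trajectories, hence the sequences $\|z_k\|$ and $\|x_k-x^*\|$, are bounded and vanish) to show that $\alpha_k$ eventually lies in $[-(1-\tfrac1{\sqrt L})+\delta,\ 1-\tfrac1{\sqrt L}]$ for some $\delta>0$, so that $\|z_k\|$ is eventually nonincreasing; then the classical monotone-decay criterion (a convergent nonincreasing nonnegative series $\sum_k\|z_k\|^2<\infty$ forces $k\|z_k\|^2\to 0$) closes the argument. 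Establishing that eventual monotonicity — equivalently, ruling out the self-referential possibility $\|z_k\|^2\sim c/k$ with $c>0$, which is consistent with Theorem \ref{t1} but contradicts $\sum_k\|z_k\|^2<\infty$ — is the delicate point of the proof.
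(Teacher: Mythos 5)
Your reduction of the corollary to the single claim $\lim_{k\to\infty}k\|z_k\|^2=0$ is exactly right, and your observation that Theorem \ref{t1} is too weak to deliver it is the key point: $\|z_k\|=o(k^{-1/3})$ only gives $k\|z_k\|^2=o(k^{1/3})$, which is not even bounded. This is, in fact, precisely where the paper's own proof is deficient: it asserts that because $\|z_k\|$ converges at the rate $o\left(k^{-1/3}\right)$, the sequence $M_k=\frac{k}{L}\|z_k\|^2$ ``must be convergent,'' and then silently takes its limit to be zero in the line $\lim_{k\to\infty}|\alpha_k|=|\tilde L-\lim_{k\to\infty}M_k|=|\tilde L|$. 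Neither step follows from Theorem \ref{t1}. So your diagnosis locates a genuine gap in the published argument, and your proposed route --- upgrading the momentum decay from $o(k^{-1/3})$ to $o(k^{-1/2})$ via the dissipation identity $\dot W=-(1+t\|\dot x\|^2)\|\dot x\|^2$ and the Riccati equation for $u(t)=\tfrac t2\|\dot x(t)\|^2$ --- is genuinely different from, and more substantive than, what the paper does.

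That said, your own argument is not closed. The identity $\dot u=-4u^2-2u+\tfrac ut-t\tfrac{d}{dt}\bigl(f(x)-f^*\bigr)$ is correct, but the forcing term $-t\langle\nabla f(x),\dot x\rangle$ is sign-indefinite and is nowhere controlled in your sketch; the integrability facts $\int_0^\infty\|\dot x\|^2\,dt<\infty$ and $\int_0^\infty t\|\dot x\|^4\,dt<\infty$ only force $u\to0$ along a subsequence unless $\dot u$ is bounded above, which is exactly where that uncontrolled term sits. The discrete fallback has the matching hole: the criterion ``$\sum_k a_k<\infty$ with $a_k$ nonincreasing implies $ka_k\to0$'' needs both the summability $\sum_k\|z_k\|^2<\infty$ (which the paper establishes only in continuous time, and whose transfer through the symplectic correspondence is itself informal) and the eventual monotonicity of $\|z_k\|^2$, which you concede you have not established. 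So the crux of the corollary, $k\|z_k\|^2\to0$, remains unproven in your write-up --- but unlike the paper, you have correctly isolated the missing step and proposed plausible machinery for closing it.
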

\begin{proof}
It follows from theorem \ref{t1} that if the sequence $\|z_k\|$ is convergent at the rate $o\left(\frac{1}{\sqrt[3]{k}}\right)$, then the sequence $M_k = \frac{k}{L}\|z_k\|^2\geq 0$ must be convergent for a sufficiently large $k$ where $k>L$. For $\tilde{L} = \left(1-\frac{1}{\sqrt{L}}\right)$, it follows that the sequence 
\begin{equation*}
    \lim_{k\rightarrow\infty}|\alpha_k| = |\tilde{L}-\lim_{k\rightarrow\infty} M_k| = |\tilde{L}|,
\end{equation*}
as required.
\end{proof}
\subsection{Relaxation sequences}\label{relax}
  \begin{figure}
    \centering
    \includegraphics[width = 0.65\columnwidth]{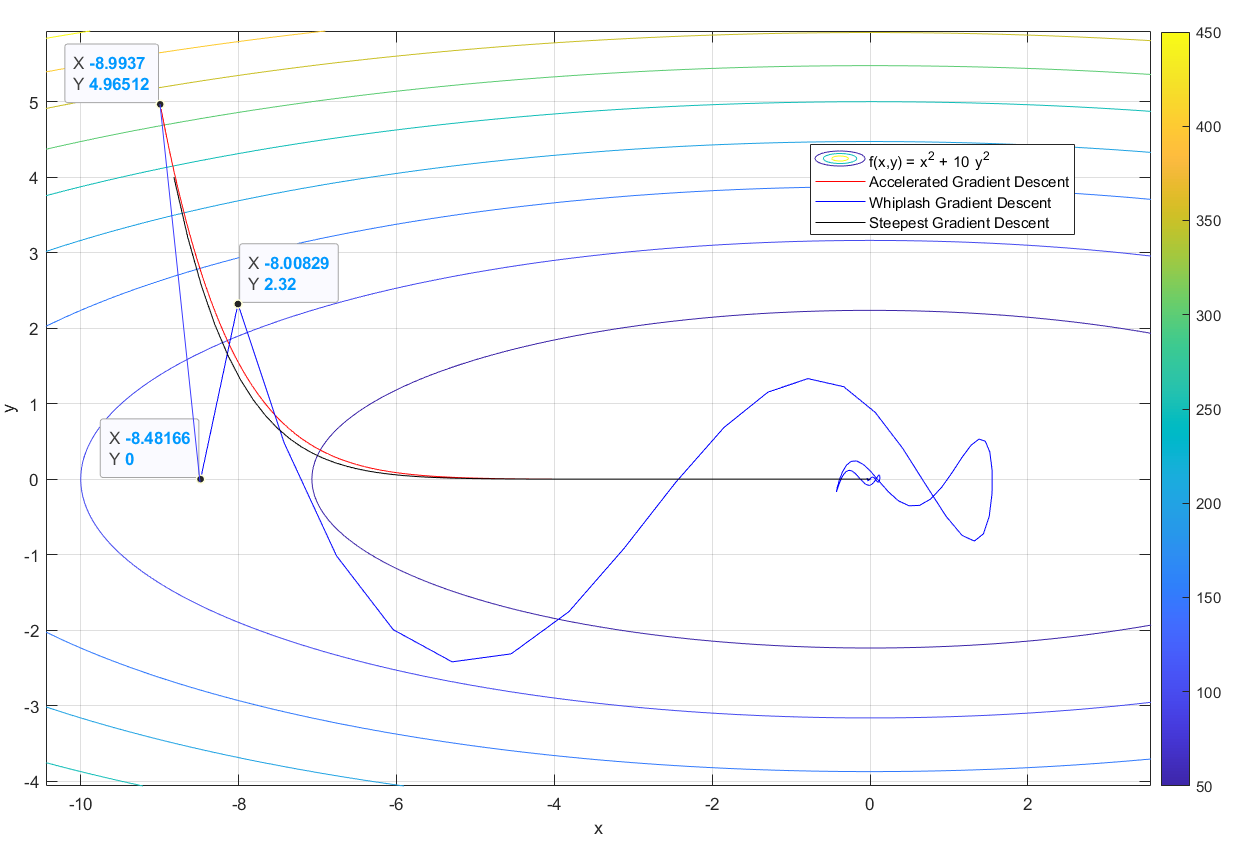}
    \caption{The whiplash scheme for ill-conditioned geometries as a comparative study. The optimiser finds the valley within a single iteration for step-size $s=0.01$.}
    \label{fig: comp}
\end{figure}
Using numerical experiments, we observe that, unlike other accelerated methods, the first step taken by the whiplash method is the largest. It increases momentum to escape the valley, after which it converges rapidly to the optima, with the oscillations dampened. The relaxation of the convergent sequence causes such a phenomenon. Closed-loop methods evolve with the momentum, which assists in escaping low curvature geometries by increasing the amplitude of transient oscillations. For higher curvature, this leads to rapid convergence towards the nearest minima---these characteristics set apart the closed-loop inertial gradient descent algorithms from classical methods. A comparative study, as shown in Figure: \ref{fig: comp}, illustrates this effect. The following theorems introduce \textit{relaxation sequence}s for $L$-smooth cost functions, a set of discrete energy terms which are strictly positive, bounded and convergent and allow us to study momentum-based scaling for the whiplash method. First we shall start with a quadratic cost function, to showcase what we mean by relaxation. 
\begin{theorem}\label{t2}
The norm of the momentum sequence $\|z_k\|$ of the Whiplash algorithm is upper and lower bounded by the recurring sequences for a cost function $f(x) =\frac{1}{2}\lambda \|x\|^2$, where $\lambda\in\mathbb{R}^{+}_{*}$ as
\begin{equation}\label{gop}
    \left||\alpha_k|\|z_k\|-\underbrace{\lambda \|x_k\|}_{I}\right|\leq \|z_{k+1}\| \leq |\alpha_k|\|z_k\| + \underbrace{\lambda\|x_k\|+\sqrt{2|\alpha_k| \lambda \|x_k\|\|z_k\|}}_{II}\quad\forall\,k>0,
\end{equation}
for any choice of step size $0<s \leq 1$.
\end{theorem}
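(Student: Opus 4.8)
Because the cost is quadratic, $\nabla f(x)=\lambda x$, so the momentum recursion in (\ref{WGDA}) collapses to the linear update $z_{k+1}=\alpha_k z_k-s\lambda x_k$; the whole theorem is then a two-sided norm estimate for the difference of the two vectors $\alpha_k z_k$ and $s\lambda x_k$. The plan is to expand
\[
\|z_{k+1}\|^2=\alpha_k^2\|z_k\|^2-2s\lambda\alpha_k\langle z_k,x_k\rangle+s^2\lambda^2\|x_k\|^2,
\]
and bound the cross term by Cauchy--Schwarz, $|\langle z_k,x_k\rangle|\le\|z_k\|\|x_k\|$, in both directions. This sandwiches $\|z_{k+1}\|^2$ between $\bigl(|\alpha_k|\|z_k\|-s\lambda\|x_k\|\bigr)^2$ and $\alpha_k^2\|z_k\|^2+2s\lambda|\alpha_k|\|z_k\|\|x_k\|+s^2\lambda^2\|x_k\|^2$.

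For the upper bound I would take square roots and apply subadditivity of the square root, $\sqrt{a+b+c}\le\sqrt a+\sqrt b+\sqrt c$ for $a,b,c\ge 0$, to the three nonnegative summands $\alpha_k^2\|z_k\|^2$, $2s\lambda|\alpha_k|\|z_k\|\|x_k\|$ and $s^2\lambda^2\|x_k\|^2$, obtaining $\|z_{k+1}\|\le|\alpha_k|\|z_k\|+\sqrt{2s\lambda|\alpha_k|\|z_k\|\|x_k\|}+s\lambda\|x_k\|$; since $0<s\le 1$ by (\ref{3}), replacing the bare factors $s$ by $1$ only enlarges the right-hand side and yields precisely the term marked $II$. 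For the lower bound I would take the square root of the left inequality, $\|z_{k+1}\|\ge\bigl||\alpha_k|\|z_k\|-s\lambda\|x_k\|\bigr|$ (equivalently the reverse triangle inequality applied to $z_{k+1}=\alpha_k z_k-s\lambda x_k$), and then pass from $s\lambda\|x_k\|$ to $\lambda\|x_k\|$ using $s\le 1$ to recover the term marked $I$.

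The delicate step is the last one: unlike the upper bound, the quantity $\bigl||\alpha_k|\|z_k\|-s\lambda\|x_k\|\bigr|$ is not monotone in $s$, so replacing $s$ by $1$ must be argued rather than asserted. I expect this to require a short case analysis on the sign of $|\alpha_k|\|z_k\|-\lambda\|x_k\|$, combining in each case the two reverse-triangle estimates coming from the groupings $\|\alpha_k z_k-s\lambda x_k\|$ and $\|s\lambda x_k-\alpha_k z_k\|$ together with $0<s\le 1$; this is the only place where the argument is not purely mechanical. Beyond it, the proof uses nothing about $f$ except linearity of its gradient and nothing about the discretisation except the step-size constraint (\ref{3}), which is exactly why it holds for every $0<s\le 1$ and serves as the simplest "base case" exhibiting the relaxation behaviour (a large initial step followed by damped contraction) that the subsequent $L$-smooth theorems generalise.
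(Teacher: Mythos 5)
Your approach is essentially the paper's: substitute $\nabla f(x)=\lambda x$ into the momentum update, take norms, and use the reverse triangle inequality for the lower bound and Cauchy--Schwarz together with subadditivity of the square root ($a^2\le b^2+c^2+d^2\Rightarrow a\le b+c+d$ for nonnegative reals) for the upper bound. Your treatment of the upper bound is complete and correct, and in fact more careful than the paper's: since $0<s\le 1$, replacing $s\lambda$ by $\lambda$ only enlarges each of the three nonnegative summands, which yields the term marked $II$.

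The gap is exactly the step you flagged and deferred. The case analysis you hope will carry you from $\bigl||\alpha_k|\|z_k\|-s\lambda\|x_k\|\bigr|$ to $\bigl||\alpha_k|\|z_k\|-\lambda\|x_k\|\bigr|$ cannot be completed: if the iterates happen to satisfy $\alpha_k z_k=s\lambda x_k$ with $x_k\neq 0$ and $s<1$, then $z_{k+1}=0$ while $\bigl||\alpha_k|\|z_k\|-\lambda\|x_k\|\bigr|=(1-s)\lambda\|x_k\|>0$, so the claimed lower bound fails as a pointwise inequality. The paper never confronts this because its proof silently drops the step size: it writes the recursion as $z_{k+1}=\alpha_k z_k-\lambda x_k$ (equivalently $x_{k+1}=(1-\lambda+\alpha_k)x_k-\alpha_k x_{k-1}$), which follows from (\ref{WGDA}) only when $s=1$, or when $\lambda$ is understood to absorb the factor $s$. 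So your proof is sound exactly where the paper's is, and the ``delicate step'' you identified is a genuine defect of the stated bound for $s<1$: the lower bound should read $\bigl||\alpha_k|\|z_k\|-s\lambda\|x_k\|\bigr|$ (your reverse-triangle estimate), under which your argument closes with no case analysis at all.
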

\begin{proof}
    From (\ref{WGDA}) for the given choice of gradient for the cost function in question, we have, $\nabla f(x) = \lambda x$, we have upon rearrangement and consolidation of the terms,
    \begin{equation*}
        x_{k+1} = \left(1-\lambda + \alpha_k \right) x_k - \alpha_k x_{k-1},
    \end{equation*}
    which upon a simple manipulation yields
    \begin{equation}\label{pipo}
        z_{k+1} = \alpha_k z_k-\lambda x_k.
    \end{equation}
    We now take the norm on both sides and use the triangle inequality to obtain the lower bound for all $k>0$ as
    \begin{equation}\label{lhs}
         \|z_{k+1}\| \geq \left||\alpha_k| \|z_k\|-\lambda \|x_k\|\right|,
    \end{equation}
    as required. Next, we take the norm and square both sides in (\ref{pipo}), such that upon expansion of the terms we have
    \begin{equation*}
        \|z_{k+1}\|\leq \alpha^2_k\|z_k\|^2+\lambda^2\|x_k\|^2+2\langle \lambda x_k, \alpha_k z_k\rangle.
    \end{equation*}
    Using the Cauchy-Schwarz inequality, and the inequality
    $a^2\leq b^2+c^2\implies a\leq b+c\;\forall\,a,b,c\in\mathbb{R}^{+}$, we have
    \begin{equation}\label{rhs}
        \|z_{k+1}\|\leq |\alpha_k|\|z_k\|+\lambda\|x_k\|+\sqrt{2|\alpha_k|\lambda\|x_k\|\|z_k\|}\;\forall\,k>0.
    \end{equation}
    Upon combining (\ref{lhs}) and (\ref{rhs}), we have (\ref{gop}), as required.
\end{proof}
Note that the terms $(I)$ and $(II)$ in (\ref{gop}) are what signify the \emph{relaxation scheme}, making the Whiplash descent algorithm essentially non-classical. There terms occur because of the feedback and as we shall show in theorem \ref{t3}, we shall find similarly occurring sums and terms which relaxes the convergence.

\begin{remark} 
The physical significance of this result is that the momentum sequence is not monotonically decreasing but as we find from the earlier lemmas, the system is convergent, which means that there is a relaxation between the bounds for the system, essentially allowing it to adapt its behaviour with respect to the geometry.
\end{remark}
\begin{theorem}\label{t3}
The whiplash gradient scheme admits a absolutely convergent relaxation sequence $\Delta_k$ for convex $L$-smooth cost functions, so that
\begin{gather*} 
    \|x_{k+1}-x^*\|^2 \leq \|x_k-x^*\|^2 + \Delta_k,\quad \forall\;k>0,\quad\text{where}\;\Delta_k = 3\alpha_k^2\|z_k\|^2+
    2\alpha_k\|x_{k+1}-x^*\|\|z_k\|.
\end{gather*} 
\end{theorem}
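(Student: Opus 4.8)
The plan is to track the squared distance to the optimum directly from the one-line form of the iteration in (\ref{WGDA}), namely $x_{k+1}=x_k+\alpha_k z_k-s\nabla f(x_k)$. I would write $x_{k+1}-x^*=u_k+\alpha_k z_k$, where $u_k:=x_k-s\nabla f(x_k)-x^*$ is exactly the displacement produced by a single plain gradient step from $x_k$, and expand
\begin{equation*}
\|x_{k+1}-x^*\|^2=\|u_k\|^2+2\alpha_k\langle u_k,z_k\rangle+\alpha_k^2\|z_k\|^2 .
\end{equation*}
This separates a ``descent part'' $\|u_k\|^2$ from a ``momentum part''; the whole design of the argument is that the momentum part will supply the relaxation term $\Delta_k$, while the descent part must be shown not to increase the distance to $x^*$.

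The main work, and the step I expect to be the real obstacle, is the contraction $\|u_k\|^2\le\|x_k-x^*\|^2$. Expanding, $\|u_k\|^2=\|x_k-x^*\|^2-2s\langle\nabla f(x_k),x_k-x^*\rangle+s^2\|\nabla f(x_k)\|^2$; the convex under-estimator (\ref{bomb}) gives $\langle\nabla f(x_k),x_k-x^*\rangle\ge f(x_k)-f(x^*)$, and applying the $L$-smoothness inequality (\ref{ls}) along the gradient step together with $f(x^*)\le f\big(x_k-s\nabla f(x_k)\big)$ gives $s^2\|\nabla f(x_k)\|^2\le 2s\,(f(x_k)-f(x^*))$ for the choice $s=\frac{1}{L}$. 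The two gradient contributions then cancel and the contraction follows. (Equivalently this is the standard fact that a gradient step with $s\le\frac{1}{L}$ is nonexpansive toward $x^*$ for $L$-smooth convex $f$, and could simply be cited.) This is precisely where the step-size restriction (\ref{3}) is essential: without it the term $s^2\|\nabla f(x_k)\|^2$ — equivalently $\|z_{k+1}\|^2$ — cannot be absorbed, and $\nabla f(x_k)$ would leak into $\Delta_k$, destroying the stated form.

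For the cross term I would substitute back $u_k=(x_{k+1}-x^*)-\alpha_k z_k$, so that $2\alpha_k\langle u_k,z_k\rangle=2\alpha_k\langle x_{k+1}-x^*,z_k\rangle-2\alpha_k^2\|z_k\|^2$, and bound the inner product by Cauchy--Schwarz; in the regime $\alpha_k\ge 0$ (which, by Corollary~1, holds for all large $k$ and is the relevant one) this is at most $2\alpha_k\|x_{k+1}-x^*\|\|z_k\|$. Collecting the three pieces yields
\begin{equation*}
\|x_{k+1}-x^*\|^2\le\|x_k-x^*\|^2+2\alpha_k\|x_{k+1}-x^*\|\|z_k\|-\alpha_k^2\|z_k\|^2 ,
\end{equation*}
and since $-\alpha_k^2\|z_k\|^2\le 3\alpha_k^2\|z_k\|^2$ the right-hand side is dominated by $\|x_k-x^*\|^2+\Delta_k$ with $\Delta_k$ exactly as stated; the factor $3$ is slack that one may alternatively generate by a slightly coarser bookkeeping of the momentum term, which is presumably why the theorem is phrased with it.

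Finally I would check that $\Delta_k=3\alpha_k^2\|z_k\|^2+2\alpha_k\|x_{k+1}-x^*\|\|z_k\|$ genuinely deserves the name ``relaxation sequence''. It is bounded because $\alpha_k$ is bounded (Corollary~1), $\|z_k\|$ is bounded (Lemma~\ref{m}) and $\|x_{k+1}-x^*\|$ is bounded (Lemma~\ref{k}); it is (absolutely) convergent, in fact to $0$, because $\|z_k\|\to 0$ (Lemma~\ref{m}, sharpened by Theorem~\ref{t1}) while the remaining factors stay bounded; and it is eventually strictly positive since $\alpha_k\to 1-\frac{1}{\sqrt L}>0$. Hence $\Delta_k$ has all the advertised properties and the displayed inequality holds for all $k>0$, completing the proof.
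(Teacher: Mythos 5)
Your proposal follows essentially the same route as the paper: expand $\|x_{k+1}-x^*\|^2$ from the update $x_{k+1}=x_k+\alpha_k z_k-s\nabla f(x_k)$, eliminate the gradient contributions using convexity together with $L$-smoothness, rewrite the cross term in terms of $x_{k+1}-x^*$ and apply Cauchy--Schwarz, and then invoke Lemmas \ref{k}, \ref{m} and Theorem \ref{t1} for the boundedness and convergence of $\Delta_k$. If anything, your explicit justification of the gradient-step contraction $\|x_k-s\nabla f(x_k)-x^*\|\le\|x_k-x^*\|$ is more careful than the paper's, which disposes of the term $\frac{1}{L^2}\|\nabla f(x_k)\|^2$ with only a brief appeal to (\ref{bomb}).
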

\begin{proof}
From (\ref{WGDA}) and (\ref{ls}), we know that
\begin{equation*}
\begin{gathered}
x_{k+1} = x_k -\frac{1}{L}\nabla{f}(x_k)+\alpha_k z_k.
\end{gathered}
\end{equation*}
Subtracting the optimiser $x^*$ on both sides, taking its norm and squaring, we obtain
\begin{gather*}
\|x_{k+1}-x^*\|^2 = \|x_k-x^*+\alpha_k z_k\|^2-\frac{2}{L}\langle x_k-x^*, \nabla{f}(x_k) \rangle-\frac{2\alpha_k}{L} \langle \nabla{f}(x_k), x_k-x_{k-1}\rangle+\frac{1}{L^2}\| \nabla{f}(x_k)\|^2.
\end{gather*}  
Using (\ref{bomb}) and dropping the strictly negative terms,  we obtain
\begin{gather*}
    \|x_{k+1}-x^*\|^2 \leq \|x_k-x^*+\alpha_k z_k\|^2-
    \frac{2\alpha_k}{L} \langle \nabla{f}(x_k), z_k\rangle.
\end{gather*}
Rearranging the terms, we obtain
\begin{equation}
\begin{gathered}
    \|x_{k+1}-x^*\|^2 \leq \|x_k-x^*\|^2+\alpha_k^2\|z_k\|^2+
    2\alpha_k \langle \left(x_k-x^*-\frac{\nabla{f}(x_k)}{L}\right), z_k \rangle.
\end{gathered}
\end{equation}
On further rearrangement using the algorithm (\ref{WGDA}), we have
\begin{equation}
    \|x_{k+1}-x^*\|^2 \leq \|x_k-x^*\|^2+ 3\alpha_k^2\|z_k\|^2+
    2\alpha_k\langle x_{k+1}-x^*,z_k\rangle.
\end{equation}
Following the approach in lemma \ref{jp}, we have
\begin{equation}
    \|x_{k+1}-x^*\|^2 \leq \|x_k-x^*\|^2+
    \underbrace{3\alpha_k^2\|z_k\|^2+
    2\alpha_k\|x_{k+1}-x^*\|\|z_k\|}_{\Delta_k}.
\end{equation}
Now using this relationship recursively, we have the telescopic sum, which upon further simplification gives
\begin{equation}
\label{bound}
\begin{gathered}
\|x_{k+1}-x^*\|^2 \leq \|x_0-x^*\|^2+\sum_{i=0}^{k+1}\Delta_i.
\end{gathered}
\end{equation}
Now, let us investigate the distance sequence $\Delta_k$. Using the algorithm (\ref{WGDA}), we may substitute and rearrange the terms as the following:
\begin{equation*}
\Delta_k = 3\alpha_k^2\|z_k\|^2+
    2\alpha_k\|x_{k+1}-x^*\|\|z_k\|.
\end{equation*}
Taking the limit on both sides, we have:
\begin{equation*}
    \begin{gathered} \lim_{k\rightarrow\infty}
    \Delta_k = 3\lim_{k\rightarrow\infty}\alpha_k^2\|z_k\|^2+2\lim_{k\rightarrow\infty} 2\alpha_k\|x_{k+1}-x^*\|\|z_k\|.
\end{gathered}
\end{equation*}
Now using lemmas \ref{k}, \ref{m}, \ref{jp} and theorem \ref{t1}, we have that the \emph{relaxation sequence} (distance) $0\leq\Delta_k<\infty$ for every $k>0$ and is convergent, as required. It follows immediately that
$$
\lim_{N\rightarrow \infty}\sum_{k=1}^{N}|\Delta_k| = \sum_{k=1}^{\infty}\Delta_k<\infty,
$$
which implies that the sequence $\Delta_k$ is absolutely convergent as required.
\end{proof}
\begin{remark}
This shows that we do not have a clear nature to the type of convergence (rate) that evolves over the iterations for convex objectives. A similar result is observed in the next theorem, where we use a strong assumption of local convexity.
\end{remark}
\begin{theorem}\label{t4}
Let \(f\) be an $L$-smooth function, which satisfies the Polyak-{\L}ojasiewicz inequality (\ref{PL}) for some $L>\lambda>0$. Then the whiplash gradient descent method converges at a rate, given by the relaxation sequence $\xi_i>0$, for every $i>0$:
\begin{equation}
|f(x_{k+1})-f^*|  \leq  \left(1-\frac{\lambda}{L}\right)^k |f(x_0)-f^*|+\frac{L}{2}\sum_{i = 1}^k \left|\xi_i\right|,
\end{equation}
for every $k>0$ where 
$$\xi_{i} = \left( 1-\frac{\lambda}{L}\right)^{i} \left(1-\frac{1}{\sqrt{L}} -\frac{{k-i}}{L}\|z_{k-i}\|^2\right)^2\|z_{k-i}\|^2,$$
is an absolutely convergent sequence.
\end{theorem}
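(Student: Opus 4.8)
The plan is to combine the $L$-smoothness descent inequality (\ref{ls}) with the update rule of the algorithm (\ref{WGDA}) to obtain a one-step contraction, and then unroll it geometrically using the Polyak-{\L}ojasiewicz hypothesis. First I would apply (\ref{ls}) with $y=x_{k+1}$ and $x=x_k$. Since $s=1/L$, the update reads $x_{k+1}-x_k=-\frac{1}{L}\nabla f(x_k)+\alpha_k z_k$, so when we expand $\langle\nabla f(x_k),x_{k+1}-x_k\rangle+\frac{L}{2}\|x_{k+1}-x_k\|^2$ the two cross terms $\pm\langle\nabla f(x_k),\alpha_k z_k\rangle$ cancel exactly, leaving
\begin{equation*}
f(x_{k+1})\leq f(x_k)-\frac{1}{2L}\|\nabla f(x_k)\|^2+\frac{L}{2}\alpha_k^2\|z_k\|^2 .
\end{equation*}
This exact cancellation — which is precisely why the step size must be taken equal to $1/L$ — is the one genuinely algorithm-specific computation; after it, the whiplash iterate behaves like a ``perturbed gradient step'' with perturbation $\frac{L}{2}\alpha_k^2\|z_k\|^2$.

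Next I would invoke the P{\L} inequality (\ref{PL}) in the form $\|\nabla f(x_k)\|^2\geq 2\lambda\bigl(f(x_k)-f^*\bigr)$ to dominate the gradient term, subtract $f^*$ from both sides, and substitute $\alpha_k=1-\frac{1}{\sqrt{L}}-\frac{k}{L}\|z_k\|^2$, obtaining the linear recursion
\begin{equation*}
f(x_{k+1})-f^*\leq\Bigl(1-\frac{\lambda}{L}\Bigr)\bigl(f(x_k)-f^*\bigr)+\frac{L}{2}\,\alpha_k^2\|z_k\|^2,\qquad 0<1-\frac{\lambda}{L}<1,
\end{equation*}
where the contraction factor is a genuine contraction because $\lambda<L$. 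For the zeroth step $x_1=x_0-s\nabla f(x_0)$ the same computation with the momentum term absent gives $f(x_1)-f^*\leq(1-\lambda/L)(f(x_0)-f^*)$, which furnishes the base case. Iterating the recursion down to $f(x_0)-f^*$ and re-indexing by $i=k-j$ produces the stated estimate with $\xi_i=(1-\lambda/L)^i\,\alpha_{k-i}^2\|z_{k-i}\|^2=(1-\lambda/L)^i\bigl(1-\frac{1}{\sqrt{L}}-\frac{k-i}{L}\|z_{k-i}\|^2\bigr)^2\|z_{k-i}\|^2$; the absolute values appearing in the statement are cosmetic, since $f-f^*\geq 0$ and each $\xi_i\geq 0$.

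Finally, to justify that $\xi_i$ is absolutely convergent I would quote the earlier results: by Theorem \ref{t1}, $\|z_k\|\to 0$, and by its Corollary $|\alpha_k|\to 1-\frac{1}{\sqrt{L}}$, so the sequence $\alpha_j^2\|z_j\|^2$ is bounded, say by $M<\infty$ (indeed it tends to $0$). Hence $\sum_{i\geq 1}|\xi_i|\leq M\sum_{i\geq 1}(1-\lambda/L)^i<\infty$, the geometric series converging because $1-\lambda/L\in(0,1)$; this simultaneously shows the partial sums in the theorem are bounded uniformly in $k$. I do not expect a serious obstacle: the only points needing care are bookkeeping the index ranges of the telescoped sum together with the base case at $k=0$, and making the boundedness of $\alpha_k^2\|z_k\|^2$ explicit from Theorem \ref{t1} and its Corollary. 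The crux is the cross-term cancellation in the first paragraph, which reduces the analysis to the classical P{\L} contraction argument carried out with an additive, absolutely summable perturbation.
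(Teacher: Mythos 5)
Your proposal follows essentially the same route as the paper: the cross-term cancellation obtained by substituting $z_{k+1}=\alpha_k z_k-\frac{1}{L}\nabla f(x_k)$ into the $L$-smoothness bound, the Polyak-{\L}ojasiewicz step yielding the contraction factor $\bigl(1-\frac{\lambda}{L}\bigr)$, and the geometric unrolling into the relaxation sequence $\xi_i$ are exactly the paper's computation. Your closing argument for absolute convergence, bounding $\alpha_j^2\|z_j\|^2$ uniformly and summing the geometric series $\sum_i (1-\lambda/L)^i$, is in fact slightly tighter than the paper's, which only appeals to the boundedness and convergence of the individual terms $\xi_i$ via Lemmas \ref{k} and \ref{m}.
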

\begin{proof}
Using (\ref{WGDA}) and (\ref{ls}), we have 
\begin{equation*}
    f(x_{k+1})\leq f(x_k)+\langle \nabla{f(x_{k})},z_{k+1} \rangle+\frac{L}{2}\|z_{k+1}\|^2.
\end{equation*}
Upon expanding $z_{k+1}$ and rearranging, we obtain
\begin{gather*}
    f(x_{k+1})\leq f(x_{k})+\langle \nabla{f(x_{k})},\alpha_k\,z_k-\frac{1}{L}\nabla{f(x_k)} \rangle + \frac{\alpha_k^2 L}{2}\|z_k\|^2+\frac{1}{2L}\|\nabla{f}(x_k)\|^2-\langle \alpha_k z_k,\nabla{f}(x_k)\rangle.
\end{gather*}
Upon cancellation of common terms and subtracting from both sides the optimal value $f^*$ and rearrangement, we obtain
\begin{equation*}
    f(x_{k+1})-f^*\leq f(x_{k})-f^*-\frac{1}{2L}\|\nabla f(x_k)\|^2+\frac{\alpha_k^2 L}{2}\|z_k\|^2.
\end{equation*}
Now, applying the P{\L} inequality (\ref{PL}), we obtain
\begin{equation*}
f(x_{k+1})-f^*\leq f(x_{k})-f^*-\frac{\lambda}{L}\left(f(x_k)-f^*\right)+\frac{\alpha_k^2 L}{2}\|z_k\|^2.
\end{equation*}
Upon further rearrangement, we obtain
\begin{equation*}
    f(x_{k+1})-f^*
    \leq \left(1-\frac{\lambda}{L}\right)(f(x_k)-f^*)+\frac{\alpha_k^2 L}{2}\|z_k\|^2.
    \end{equation*}
Applying this recursively, we obtain the telescoping sum,
\begin{equation*}
f(x_{k+1})-f^*  \leq  \left(1-\frac{\lambda}{L}\right)^k(f(x_0)-f^*)+\frac{L}{2}\sum_{i = 1}^k \underbrace{\left( 1-\frac{\lambda}{L}\right)^{i} \left(1-\frac{1}{\sqrt{L}} -\frac{{k-i}}{L}\|z_{k-i}\|^2\right)^2\|z_{k-i}\|^2}_{\xi_i},
\end{equation*}  
for all $k>0$ as required. Taking the absolute values on both sides, and applying the triangle inequality, we have
\begin{equation}
    |f(x_{k+1})-f^*|  \leq  \left(1-\frac{\lambda}{L}\right)^k|f(x_0)-f^*|+\frac{L}{2}\left|\sum_{i = 1}^k \xi_i\right|.
\end{equation}
Now, we know that every term in $\xi_i$ is positive from the relation of $L>\lambda$ and using lemmas \ref{k} and \ref{m}, we know that every term in the relaxation sequence $\left|\xi_i\right|$ must converge for every $i>0$ and thus $|f(x_{k+1})-f^*|$ must converge at the given relaxed rate. Moreover, from lemma \ref{k}, we know that every term in $\xi_i$ for every $i>0$ is such that
\begin{equation*}
    \lim_{k\rightarrow\infty}\max_{i\leq k}|\xi_{i}| <\infty,
\end{equation*}
which implies that 
\begin{equation}
\lim_{N\rightarrow \infty}\left|\sum_{i = 1}^N \xi_i\right| = \sum_{i = 1}^{\infty} \left|\xi_i\right|<\infty.
\end{equation}
Hence, the sequence $\xi_i$ is absolutely convergent, as required.
\end{proof}
\begin{remark}
For a specific class of functions which is strictly convex, the algorithm exhibits an implicit nature for the convergence upper bounds. 
\end{remark}
\begin{theorem}\label{t5}
For the convex class of cost functions, $g(x) = \frac{1}{4}\lambda \|x-x^*\|^4$, where $\lambda \in \mathbb{R}^+$ is such that for sufficiently large $k>\lambda$, the sequence $\|x_k-x^*\|$ of the Whiplash Gradient descent converges at the rate of
\begin{equation*}
   \mathcal{O}\left(\frac{1}{\sqrt[3]{1-\frac{\lambda}{k}}}\right).
\end{equation*}
\end{theorem}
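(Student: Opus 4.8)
The plan is to mirror the proof of Theorem~\ref{t1} with the gradient of this particular cost inserted into the recursion. Writing $p_k = x_k-x^*$, a direct computation gives $\nabla g(x) = \lambda\|x-x^*\|^2(x-x^*)$, so that $\|\nabla g(x_k)\| = \lambda\|p_k\|^3$; the cost is convex and, on any bounded neighbourhood of $x^*$ (where by Lemma~\ref{k} the iterates eventually lie), $L$-smooth, so Lemmas~\ref{k},~\ref{m},~\ref{jp} and Theorem~\ref{t1} are all available. A useful preliminary simplification is that $g$ is rotationally symmetric about $x^*$ and the opening gradient step places $x_1$ on the line through $x_0$ and $x^*$; by induction every iterate stays on that line, so the whole analysis reduces to the scalar variable $\rho_k$ defined by $x_k = x^* + \rho_k\hat u$.

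Next I would isolate the damping term exactly as in Theorem~\ref{t1}: substituting $\nabla g(x_k) = \lambda\|p_k\|^2 p_k$ into (\ref{WGDA}) and rearranging yields
\[
ks\,\|z_k\|^2 z_k \;+\; s\lambda\,\|p_k\|^2 p_k \;=\; (1-\sqrt s)\,z_k - z_{k+1}.
\]
Because $\nabla g(x_k)$ is collinear with $p_k$, and---on the line, or more generally through the Cauchy--Schwarz control of $\langle p_k,z_k\rangle$ from Lemma~\ref{jp} together with the convergence of $\|x_k-x^*\|$ from Lemma~\ref{k}---the displacement $z_k = p_k-p_{k-1}$ is, near the optimum, collinear with $p_k$ up to a bounded factor, the left-hand side can be consolidated into a single cubic term whose coefficient has the shape $k\bigl(1-\tfrac{\lambda}{k}\bigr)$; this consolidation is legitimate precisely when $k>\lambda$, so that the coefficient stays positive. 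Taking norms, applying the triangle inequality and letting $k\to\infty$ then annihilates the right-hand side by Lemma~\ref{m}, leaving
\[
\lim_{k\to\infty} k\Bigl(1-\tfrac{\lambda}{k}\Bigr)\|z_k\|^3 = 0,
\]
which is the asserted rate: the factor $\mathcal{O}\bigl(1/\sqrt[3]{1-\lambda/k}\bigr)$ is the correction this cost contributes on top of the ambient $\tfrac{1}{\sqrt[3]{k}}$ rate of Theorem~\ref{t1}. The passage from $\|z_k\|$ to $\|x_k-x^*\|$ then invokes the asymptotic equivalence of the momentum and displacement sequences established in the discretisation together with Lemma~\ref{k}.

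The main obstacle is exactly this consolidation step. The two ways a cubic term can arise---the gradient $\lambda\|p_k\|^2 p_k$ and the damping $k\|z_k\|^2 z_k$---only merge into the single coefficient $k(1-\lambda/k)$ if the geometry between $p_k$ and $z_k$, both their alignment and their relative magnitude, is genuinely controlled, and this must be done without tacitly presupposing the rate being proved; reducing to the scalar line settles the alignment but not the relative magnitude. A safe fallback, should the clean consolidation prove elusive, is to establish only the two-sided estimate $\bigl|k\|z_k\|^3-\lambda\|p_k\|^3\bigr|\to 0$ via the reverse triangle inequality applied to the display above, and then feed in Theorem~\ref{t1} to recover the $(1-\lambda/k)^{-1/3}$ correction. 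The hypothesis $k>\lambda$ in the statement is precisely what keeps the fused coefficient from changing sign throughout this manipulation.
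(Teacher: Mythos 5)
There is a genuine gap, and it sits exactly where you flagged the difficulty. Your consolidation produces a statement about the momentum sequence, $\lim_{k\to\infty} k\bigl(1-\tfrac{\lambda}{k}\bigr)\|z_k\|^3 = 0$, but since $(1-\lambda/k)\to 1$ this is literally equivalent to Theorem~\ref{t1} and carries no new information; more importantly, the theorem's claim is a rate for $\|x_k-x^*\|$, and your bridge --- ``the asymptotic equivalence of the momentum and displacement sequences established in the discretisation'' --- does not exist. The equivalence established in the discretisation section is between $z_{k+1}$ and $v_k$, i.e.\ two versions of the \emph{velocity}; there is no equivalence between $z_k=x_k-x_{k-1}$ and $p_k=x_k-x^*$, and indeed $\|z_k\|$ decays like $o(k^{-1/3})$ while $\|p_k\|$ may decay arbitrarily slowly. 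For the same reason the proposed fusion of $k\|z_k\|^2 z_k$ and $\lambda\|p_k\|^2 p_k$ into a single coefficient $k(1-\lambda/k)$ cannot be justified: it needs $\|z_k\|$ and $\|p_k\|$ to be comparable in magnitude, not merely collinear, and your fallback estimate $\bigl|k\|z_k\|^3-\lambda\|p_k\|^3\bigr|\to 0$ only recovers $\|p_k\|\to 0$ (already Lemma~\ref{k}), not the stated rate.

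The paper's route is different and avoids this entirely: it never tries to compare the magnitudes of $z_k$ and $p_k$. Starting from the norm inequality (\ref{ini}) with $\|\nabla g(x_k)\|=\lambda\|p_k\|^3$, it writes $k\|z_k\|^3=k\|p_k-p_{k-1}\|^3$ and invokes the pointwise inequality $\|a\|^3-\|b\|^3\le\|a-b\|^3$ to pass to
\begin{equation*}
k\bigl(\|p_k\|^3-\|p_{k-1}\|^3\bigr)\;\le\;\bigl|L\|z_{k+1}\|+L'\|z_k\|\bigr|+\lambda\|p_k\|^3,
\end{equation*}
so that the $\lambda\|p_k\|^3$ from the gradient is absorbed into the left side, giving $(1-\lambda/k)\|p_k\|^3\le\bigl|\tfrac{L\|z_{k+1}\|+L'\|z_k\|}{k}\bigr|+\|p_{k-1}\|^3$ for $k>\lambda$; the factor $(1-\lambda/k)$ thus multiplies $\|p_k\|^3$ directly, which is what yields a conclusion about $\|x_k-x^*\|$. (You should be aware that the inequality $\|a\|^3-\|b\|^3\le\|a-b\|^3$ the paper relies on is false for general vectors --- take scalars $a=2$, $b=1$ --- so the paper's own proof has a defect at this step; but that is the step your proposal would need to replace, not the collinearity reduction, which is a side issue.)
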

\begin{proof}
For the function $g(x)$, we have the gradient as $\nabla{g}(x) = \lambda \|x-x^*\|^3$. From the inequality (\ref{ini}), using the sequence $p_k = x_k-x^*$, and taking the absolute value on both sides, we have
\begin{equation*}
     k\|z_k\|^3\leq \left|L\|z_{k+1}\|+L'\|z_k\|+\lambda\|p_k\|^3\right|.
\end{equation*}
Using the relation of $z_k = p_k-p_{k-1}$ and the triangle inequality, we have
\begin{equation*}
     k\|p_k-p_{k-1}\|^3\leq \left|L\|z_{k+1}\|+L'\|z_k\|\right|+\lambda\|p_k\|^3.
\end{equation*}
We know that for two vectors $a, b$ we have $$\|a\|^3-\|b\|^3 \leq \left|\|a\|^3-\|b\|^3 \right| \leq \|a-b\|^3.$$
Using this relationship, we have
\begin{equation*}
    k\left(\|p_k\|^3-\|p_{k-1}\|^3\right) \leq \left|L\|z_{k+1}\|+L'\|z_k\|\right|+\lambda\|p_k\|^3.
\end{equation*}
Upon rearrangement, we have
\begin{equation*}
        0\leq \left| 1-\frac{\lambda}{k} \right| \|p_k\|^3 \leq \left|\frac{L\|z_{k+1}\|+L'\|z_k\|}{k}\right|+\|p_{k-1}\|^3\;\forall\;k>\lambda.
\end{equation*}
From lemma \ref{2}, we know that the sequence $\|p_k\|$ is convergent and from theorem \ref{t1}, we have the sequence convergence rate for $\|z_k\| = \mathcal{O}\left(\frac{1}{\sqrt[3]{k}}\right)$. We know using the triangle inequality that
\begin{equation}
   \lim_{k\rightarrow\infty} \left|\frac{L\|z_{k+1}\|+L'\|z_k\|}{k}\right|\leq \lim_{k\rightarrow\infty}\left|\frac{L\|z_{k+1}\|}{k}\right|+\lim_{k\rightarrow\infty}\left|\frac{L'\|z_k\|}{k}\right| = 0.
\end{equation}
Upon applying limits on both sides, we thus have
\begin{equation*}
        0\leq\lim_{k\rightarrow\infty} \left(1-\frac{\lambda}{k}\right)\|p_k\|^3 \leq \lim_{k\rightarrow\infty} \|p_{k-1}\|^3=0\;\forall\;k>\lambda.
\end{equation*}
Hence, we conclude that for all $k$ sufficiently larger than $\lambda$, we have
\begin{equation}
    \|p_k\| = \|x_k-x^*\| = \mathcal{O}\left(\frac{1}{\sqrt[3]{1-\frac{\lambda}{k}}}\right),
\end{equation}
as required.
\end{proof}
\section{Envelope convergence}\label{sec5}
In the study of inertial gradient dynamics, we know that such systems are globally asymptotically stable for all strictly positive damping laws. We further know that if the cost function gradient is Lipschitz continuous, there exists a convergent global solution, for such a system. But such a solution is non-analytical in many cases. Our analysis starts with consideration of the asymptotic nature of globally asymptotically stable solutions of (\ref{gen}). To understand the rate of convergence of the system, we need to perform a Lyapunov analysis on the Lipschitz continuous scaled system 
\begin{equation}\label{system}
    y(t) = \mathcal{P}(t) x(t).
\end{equation}
As discussed earlier, finding such Lyapunov functions is tedious and often lacks motivation. Lyapunov's fundamental argument leads to a family of energy functions that are non-increasing along the system's dynamics, typically the only constraint. Therefore, instead of finding a state-dependent energy function for the scaled dynamical system, we introduce an integral anchor constraint. Further, we predict convergence rates using the knowledge of \emph{implicit time dependence} and the \emph{system states' bounded nature}.
\subsection{Asymptotic loose bound}\label{alb}
We will consider the asymptotic behaviour of the system and, correspondingly, the associated Lyapunov functions. It is, therefore, necessary to introduce a concept of asymptotic loose bound of functions in this context. We have modified it reasonably to suit our requirement as $\mathscr{O}$ while borrowing some of its characters. A function $f(t)$ which is asymptotically loose bounded, is said to be convergent at the rate $\nu(t)>0$, if 
\begin{equation}
0\leq\varlimsup_{t\rightarrow\infty} |\nu(t)f(t)|< \infty\;\text{which is denoted as}\; f(t)=\mathscr{O}\left(\frac{1}{\nu(t)}\right).
\label{def}
\end{equation} 
We define the loose bound notation of two real and bounded functions $\mathcal{A}_t, \mathcal{B}_t:\mathbb{R}^{+}\mapsto \mathbb{R}$, such that $\mathcal{A}_t = \mathcal{O}\left(a(t)\right)$ and $\mathcal{B}_t = \mathcal{O}\left(b(t)\right)$, where $\mathscr{O}\left(\mathrm{a}(t)\right)\leq\mathscr{O}\left(\mathrm{b}(t)\right)$. We further have the following relations:
\begin{equation}
\begin{gathered}\label{omega}
    \mathscr{O}\left(\lambda\,\mathrm{a}(t)\right) = \sgn(\lambda)\,\mathscr{O}\left(\mathrm{a(t)}\right) \;\text{where}\; \lambda \in \mathbb{R},\\
    \mathscr{O}\left(a(t)\right)\leq  \lambda \mathscr{O}\left(b(t)\right)\Rightarrow \mathcal{A}_t\sim \sgn(\lambda)\,\mathscr{O}\left(b(t)\right),\;\text{(asymptotically equivalent)}\\
    \mathscr{O}\left(\mathrm{a}(t)\mathrm{b}(t)\right) = \mathscr{O}\left(\mathrm{a}(t)\right) \mathscr{O}\left(\mathrm{b}(t)\right),\\
    \mathscr{O}\left(\mathrm{a}(t) \pm \mathrm{b}(t)\right) = \pm\, \mathscr{O}\left(\mathrm{b}(t)\right).
\end{gathered}
\end{equation}
This implies that, whichever function is relatively asymptotically weaker to converge is the dominating function amongst the two and shall be represented by the \say{$\asympl$} symbol i.e. $\mathcal{A}_t\asympl \mathcal{B}_t \implies\;\mathcal{B}_t= c \mathcal{A}_t$,  but $c' \mathcal{B}_t\nasympl \mathcal{A}_t$, where $\{c,c'\}\geq 0$. This relation can be extended to higher dimensions for vectors as shown in lemma \ref{vec}. 
\begin{lemma}\label{fund}
Suppose an $L$-smooth function $f(t)$ is asymptotically bounded, and convergent at the rate $\beta(t)$ such that $\varlimsup_{t\rightarrow\infty} |\beta(t) f(t)| = 0$. If the rate $\beta(t)$ is such that
\begin{equation}
    \beta(t)<o\left(\frac{1}{t}\right) \;\forall \,t>0,
\end{equation}
then there exists $c\geq0$ such that 
\begin{equation}
    \int_{t_0}^t |f(s)| ds <c, \quad \forall\;t>t_0\geq 0.
\end{equation}
\end{lemma}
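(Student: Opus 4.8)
The plan is to reduce the claim to a comparison estimate against the integrable decay envelope $\tfrac{1}{\beta}$, after isolating a harmless compact initial segment. Fix $t_0 \ge 0$ and $t > t_0$. First I would turn the asymptotic hypothesis $\varlimsup_{s\to\infty}|\beta(s)f(s)| = 0$ into a pointwise tail bound: since $\beta(s) > 0$ (it is a rate function in the sense of Section \ref{sec2}) and the superior limit is $0$, there is a threshold $T = T(f,\beta) \ge t_0$ and a constant $M \ge 0$ (one may even take $M = 1$) with $|f(s)| \le \dfrac{M}{\beta(s)}$ for every $s \ge T$.

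Next I would split $\displaystyle\int_{t_0}^t |f(s)|\,ds = \int_{t_0}^{T} |f(s)|\,ds + \int_{T}^{t} |f(s)|\,ds$ and bound the two pieces separately. For the first, $L$-smoothness forces $f \in C^1$, hence $|f|$ is continuous on the compact interval $[t_0, T]$ and attains a finite maximum $M_0$ there, so $\int_{t_0}^{T}|f(s)|\,ds \le M_0\,(T - t_0) =: c_1 < \infty$, a constant independent of $t$. For the tail, the pointwise bound gives $\int_{T}^{t}|f(s)|\,ds \le \int_{T}^{t} \dfrac{M}{\beta(s)}\,ds \le M \int_{T}^{\infty} \dfrac{ds}{\beta(s)}$, and the growth hypothesis on $\beta$ is precisely what renders the last integral finite; call its value $c_2$. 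Setting $c := c_1 + c_2$ then yields $\int_{t_0}^t |f(s)|\,ds < c$ for every $t > t_0$, with $c$ not growing in $t$. Using additionally that $f$ is asymptotically bounded and continuous, $f$ is in fact bounded on all of $[0,\infty)$ and $\int_0^\infty |f| < \infty$, so $c$ can be taken independent of $t_0$ as well.

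The main obstacle is the finiteness of $\int_{T}^{\infty}\tfrac{ds}{\beta(s)}$, i.e. making the condition ``$\beta(t) < o(1/t)$'' do real work: being $o(1/t)$ alone is not enough for integrability, so it must be read as the statement that the decay $\tfrac{1}{\beta(t)}$ is dominated by a genuinely integrable rate. Under the paper's standing convention that rate functions are regular (monotone, of polynomial or exponential type), $\tfrac{1}{\beta}$ is eventually bounded by $t^{-(1+\delta)}$ for some $\delta > 0$ and the integral comparison test closes the estimate. I would therefore make the needed monotonicity/regularity of $\beta$ explicit at this point and then invoke the comparison test; the remaining interval-splitting and the $C^1$ bound on $[t_0,T]$ are routine.
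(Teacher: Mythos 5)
Your proof follows essentially the same route as the paper's: split the integral at a threshold, bound the initial segment using continuity of $|f|$ on a compact interval, and bound the tail by comparison against $1/\beta$. Where you go beyond the paper is also the most valuable part of your write-up: you observe that the tail estimate requires $\int_T^\infty \frac{ds}{\beta(s)} < \infty$, and that the stated hypothesis --- $f$ decaying strictly faster than $1/t$ --- does not by itself deliver this (e.g.\ $f(t) = \frac{1}{t\log t}$ satisfies $t f(t) \to 0$ yet is not integrable at infinity). The paper's proof simply asserts $\int_m^t \left|\mathscr{O}\left(\frac{1}{\beta(s)}\right)\right| ds \leq c_2$ without justifying integrability, so it contains exactly the gap you flag. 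Your repair --- making explicit a regularity assumption that $\frac{1}{\beta(t)}$ is eventually dominated by $t^{-(1+\delta)}$ for some $\delta > 0$, then closing with the comparison test --- is the right one, and it is consistent with how the lemma is actually used later (the convergers in Section \ref{sec5} are polynomial or exponential, for which the extra assumption holds automatically). You are right to insist that this regularity be stated rather than left implicit.
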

 
\begin{proof}
Since $|f(t)|$ is everywhere continuous and defined for all $t>0$, it follows that for any $ m >t_0 \geq 0$ and any $m \leq t$, we obtain
\begin{equation*}
 \int_{t_0}^t |f(s)| ds = \int_{t_0}^m |f(s)| ds + \int_{m}^t |f(s)| ds.
\end{equation*}
Now, since $|f(t)|$ is everywhere bounded, it follows that there exists a $c_1>0$, such that $\int_{t_0}^m |f(s)| ds = c_1$ and there exists $c_2>0$ such that for a sufficiently large $m$, we obtain using (\ref{def}) and (\ref{omega}) that
\begin{equation*}
    \int_{m}^t |f(s)| ds \asympl \int_{m}^t |\mathscr{O}\left(\frac{1}{\beta(s)}\right)| ds \leq c_2 \quad \forall\;m\leq t.
\end{equation*}
The result then follows with $c_1 + c_2 = c$.
\end{proof}
\begin{lemma}\label{bi}
In addition to the conditions on $f$ in lemma \ref{fund}, if we have the conditions $\lim_{t\rightarrow\infty}|\dot{f}(t)| = 0$, then $ \dot{f}(t) \asympl -\mathscr{O}\left(\frac{\dot{\nu}}{\nu^2}\right)$.
\end{lemma}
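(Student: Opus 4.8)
The plan is to obtain the bound by differentiating the loose-bound relation $f(t)=\mathscr{O}\left(1/\nu(t)\right)$ (with $\nu$ the decay rate of $f$ as in (\ref{def}), increasing and unbounded) in a controlled fashion, the extra hypotheses being exactly what makes this legitimate. First I would set $g(t)=\nu(t)f(t)$; by (\ref{def}) and the hypothesis $\varlimsup_{t\to\infty}|\beta(t)f(t)|=0$ carried over from Lemma \ref{fund}, the function $g$ is bounded (in fact $g(t)\to 0$), so $f=g\cdot\nu^{-1}$ exhibits $f$ as a bounded factor times the envelope $1/\nu$. Differentiating with the product rule gives
\begin{equation*}
\dot f(t)=\frac{\dot g(t)}{\nu(t)}-\frac{g(t)\,\dot\nu(t)}{\nu(t)^2}.
\end{equation*}
The second term is already of the desired form: $g$ bounded gives $g\dot\nu/\nu^2=\mathscr{O}(\dot\nu/\nu^2)$, and the sign rule $\mathscr{O}(\lambda a)=\sgn(\lambda)\mathscr{O}(a)$ of (\ref{omega}) turns it into the $-\mathscr{O}(\dot\nu/\nu^2)$ contribution. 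Hence the whole statement reduces to showing the residual $\dot g/\nu$ is asymptotically dominated by $\mathscr{O}(\dot\nu/\nu^2)$, after which the sum rule $\mathscr{O}(a\pm b)=\pm\mathscr{O}(b)$ of (\ref{omega}) yields $\dot f(t)\asympl -\mathscr{O}(\dot\nu/\nu^2)$.

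Controlling the residual is where the remaining hypotheses enter. Since $f$ is $L$-smooth, $\dot f$ is uniformly continuous ($|\ddot f|\le L$), and since $f$ converges and is bounded, $g$ has a finite limit with $\dot g$ uniformly continuous; a Barbalat-type argument then gives $\dot g(t)\to 0$, so $\dot g/\nu\to 0$ and $\dot g/\nu=\mathscr{O}(1/\nu)$. To upgrade this to domination by $\mathscr{O}(\dot\nu/\nu^2)$ I would use the integral bound $\int_{t_0}^{t}|f|\,ds<c$ of Lemma \ref{fund} together with $f(t)\to 0$: writing $f(t)=-\int_t^{\infty}\dot f(s)\,ds$ (whenever the latter converges) ties the rate of the tail of $\dot f$ to the rate $1/\nu$ of $f$, which forces $\dot g$ to decay at least like $\dot\nu/\nu$; combined with $\beta=o(1/t)$ this makes $1/\nu$ (the rate of $\dot g/\nu$) strictly faster, hence dominated by $\dot\nu/\nu^2$ in the ordering of (\ref{omega}). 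Assembling the two terms then gives the claim.

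The hard part is precisely this last upgrade: differentiating an $\mathscr{O}$-estimate is false in general, and the $L$-smoothness together with $\lim_{t\to\infty}|\dot f(t)|=0$ are needed exactly to rule out rapid oscillations in $\dot f$ that would make $\dot g/\nu$ larger than the differentiated envelope. I expect the cleanest route is the Barbalat step giving $\dot g\to 0$, followed by the integration identity $f(t)=-\int_t^{\infty}\dot f$ to convert the integrability of $f$ from Lemma \ref{fund} into a decay rate for $\dot g$; every other step — the product rule, the $\mathscr{O}$-algebra of (\ref{omega}), and the comparison of $1/\nu$ against $\dot\nu/\nu^2$ — is routine bookkeeping.
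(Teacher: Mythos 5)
Your proposal follows essentially the same route as the paper: the paper sets $\mu(t)=\nu(t)f(t)$, differentiates to get $\dot{\mu}=\dot{\nu}f+\dot{f}\nu$, uses the boundedness/Lipschitz continuity of $\mu$ to conclude $\varlimsup_{t\to\infty}\dot{\mu}(t)=0$ (your Barbalat step), and then rearranges to $\varlimsup\dot{f}=\varlimsup\left(-f\dot{\nu}/\nu\right)=-\mathscr{O}\left(\dot{\nu}/\nu^2\right)$, which is exactly your product-rule decomposition $\dot{f}=\dot{g}/\nu-g\dot{\nu}/\nu^2$ with the first term discarded. The extra ``upgrade'' machinery you propose for dominating $\dot{g}/\nu$ by $\dot{\nu}/\nu^2$ does not appear in the paper, which simply drops the vanishing residual inside the $\varlimsup$; your instinct that this is the delicate point is sound, but at the paper's level of rigor no further argument is supplied.
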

\begin{proof}
Consider the function $\mu(t) = \nu(t)f(t)$. From lemma \ref{fund}, we know that $\mu(t)$ is bounded and must converge as $t\rightarrow\infty$. Upon differentiation of $\mu(t)$, we obtain $\dot{\mu}(t)= \dot{\nu}f+\dot{f}\nu$. We know that $\mu (t)$ is Lipschitz continuous. This implies that
\begin{equation*}
\varlimsup_{t\rightarrow\infty}\dot{\mu}(t) = 0.
\end{equation*}
It follows upon rearrangement that
\begin{equation*}
    \varlimsup_{t\rightarrow\infty}\dot{f} = \varlimsup_{t\rightarrow\infty} -f\frac{\dot{\nu}}{\nu}.
    \end{equation*}
This implies using (\ref{def}) that
    \begin{equation}
\dot{f} \asympl -\mathscr{O}\left(\frac{\dot{\nu}}{\nu^2}\right),
\end{equation}
as required.
\end{proof}
\begin{lemma}\label{vec}
Let us consider a finite-dimensional vector-valued $L$-smooth function $ q(t):\mathbb{R}^{+}\rightarrow\mathbb{R}^d$, which is asymptotically bounded, and converges to the vector $q^*$ such that  $\|\mathcal{Q}(t)\| = \mathscr{O}\left(\frac{1}{\nu(t)}\right)$, where $\mathcal{Q}(t) = q(t)-q^*$, it follows that $\mathcal{Q} \asympl \mathscr{O}\left(\frac{1}{\nu(t)}\right)$.
\end{lemma}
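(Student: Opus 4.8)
The plan is to reduce the vector statement to the scalar definition (\ref{def}) applied coordinate by coordinate, using the elementary equivalence of the $\ell_2$ and $\ell_\infty$ norms in $\mathbb{R}^d$. Write $\mathcal{Q}(t) = (\mathcal{Q}^1(t),\dots,\mathcal{Q}^d(t))$ with $\mathcal{Q}^i(t) = q^i(t) - (q^*)^i$. Each coordinate function is $L$-smooth and asymptotically bounded by hypothesis, and since $q(t)\to q^*$ we have $\mathcal{Q}^i(t)\to 0$ for every $i$, so each coordinate is a legitimate candidate for the scalar loose-bound notation.

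First I would establish the ``upper'' direction. For every $i$ we have $|\mathcal{Q}^i(t)| \leq \|\mathcal{Q}(t)\|$, hence $\varlimsup_{t\to\infty}|\nu(t)\mathcal{Q}^i(t)| \leq \varlimsup_{t\to\infty}\nu(t)\|\mathcal{Q}(t)\| < \infty$, the finiteness coming from the hypothesis $\|\mathcal{Q}(t)\| = \mathscr{O}(1/\nu(t))$ together with the standing convention $\nu(t)>0$. Thus each component satisfies $\mathcal{Q}^i(t) = \mathscr{O}(1/\nu(t))$, and by (\ref{omega}) the sum $\sum_i |\mathcal{Q}^i(t)|$, and therefore $\|\mathcal{Q}(t)\|$ itself, is $\mathscr{O}(1/\nu(t))$, consistent with what was assumed.

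Next I would establish the ``dominating'' direction, which is precisely what upgrades the conclusion from $\mathcal{Q} = \mathscr{O}(1/\nu(t))$ to $\mathcal{Q} \asympl \mathscr{O}(1/\nu(t))$. Using $\|\mathcal{Q}(t)\| \leq \sqrt{d}\,\max_{1\leq i\leq d}|\mathcal{Q}^i(t)|$, I take a sequence $t_n\to\infty$ realising $\varlimsup_{t\to\infty}\nu(t)\|\mathcal{Q}(t)\|$; since the index attaining the coordinatewise maximum along $t_n$ takes finitely many values, a pigeonhole argument produces a single index $j^\ast$ and a subsequence along which $\nu(t_{n})\|\mathcal{Q}(t_{n})\| \leq \sqrt{d}\,\nu(t_{n})|\mathcal{Q}^{j^\ast}(t_{n})|$. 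Passing to the $\varlimsup$ gives $\varlimsup_t \nu(t)\|\mathcal{Q}(t)\| \leq \sqrt{d}\,\varlimsup_t \nu(t)|\mathcal{Q}^{j^\ast}(t)|$, so the norm and the slowest-converging coordinate decay at the same rate up to the constant $\sqrt{d}$; neither strictly dominates the other, so by the definition of $\asympl$ stated after (\ref{omega}) we obtain $\mathcal{Q} \asympl \mathscr{O}(1/\nu(t))$, as required. If $\varlimsup_t \nu(t)\|\mathcal{Q}(t)\| = 0$ the relation holds vacuously at rate $\nu$ in the $o$-sense and is still consistent with the loose-bound convention.

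The main obstacle is the only non-routine point above: because the coordinate attaining the maximum of $|\mathcal{Q}^i(t)|$ can change with $t$, the interchange $\varlimsup_t \max_i |\mathcal{Q}^i(t)| = \max_i \varlimsup_t |\mathcal{Q}^i(t)|$ for finitely many nonnegative functions needs the short pigeonhole/subsequence argument rather than being immediate; once that is in place everything else is the standard finite-dimensional norm equivalence and the bookkeeping rules (\ref{omega}) for $\mathscr{O}$.
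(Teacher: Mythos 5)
Your proposal is correct and follows essentially the same route as the paper's own proof: both reduce the vector statement to scalar componentwise bounds via the finite-dimensional norm equivalence with constant $\sqrt{d}$ (the paper obtains $\mathcal{Q}(t)\leq\sqrt{d}\,\|\mathcal{Q}(t)\|$ by Cauchy--Schwarz on the basis decomposition, you via the $\ell_2$--$\ell_\infty$ comparison) and then absorb the dimensional constant using the scaling rule in (\ref{omega}) to conclude $\mathcal{Q}\asympl\mathscr{O}\bigl(\frac{1}{\nu(t)}\bigr)$. Your added pigeonhole/subsequence step for interchanging $\varlimsup$ with the coordinatewise maximum is a more careful treatment of a point the paper glosses over, but it does not change the underlying argument.
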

\begin{proof}
As $\mathcal{Q}(t)$ is a finite-dimensional vector-valued function, there exist scalar functions $f_i: \mathbb{R}^{+}\mapsto \mathbb{R}$ which are everywhere bounded, $\lim_{t\rightarrow\infty} f_i(t) = 0$ for every $i\geq 1$ and for which
\begin{equation}\label{jem}
    \mathcal{Q}(t) = \sum_{i = 1}^{d}\,f_i(t)\;\hat{e}_i,
\end{equation}
where $|\hat{e}_i| = 1$ denote the unit basis vectors. We also know that 
\begin{equation}
    \|\mathcal{Q}(t)\| = \sqrt{\sum_{i = 1}^{d}f_i^2(t)}.
    \label{norm}
\end{equation}
Using Cauchy-Schwarz inequality on (\ref{jem}), and (\ref{norm}), it follows that
\begin{equation}
       \mathcal{Q}(t) \leq \sqrt{\sum_{i = 1}^{d}\,f_i^2(t)}\sqrt{\sum_{i = 1}^{d}\,\hat{e}_i^2} = \sqrt{d} \|\mathcal{Q}\| = \sqrt{d}\mathscr{O}\left(\frac{1}{\nu(t)}\right),
        \label{rate}
\end{equation}
which implies that as $t$ grows, for a finite $d\geq1$, we have
\begin{equation}
\mathcal{Q}(t)\leq \sqrt{d}\mathscr{O}\left(\frac{1}{\nu(t)}\right).
\end{equation}
Therefore, it follows using (\ref{omega}) that $\mathcal{Q}(t) \asympl \mathscr{O}\left(\frac{1}{\nu(t)}\right)$, as required.
\end{proof}
\begin{lemma}\label{grab}
For a convex cost function with Lipschitz continuous gradients, if $\|x_t-x^*\|$ converges at a rate of $\frac{1}{\lambda_t}$, then the gradient of the function converges at least at that rate.
\end{lemma}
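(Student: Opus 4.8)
The plan is to reduce everything to a single application of the Lipschitz gradient bound, since the rate notation $\mathscr{O}(\cdot)$ introduced in \eqref{def} is stable under multiplication by constants. First I would fix the interpretation of the hypothesis: ``$\|x_t-x^*\|$ converges at a rate of $\tfrac{1}{\lambda_t}$'' means $\|x_t-x^*\| = \mathscr{O}\!\left(\tfrac{1}{\lambda_t}\right)$, i.e.\ $\varlimsup_{t\to\infty}|\lambda_t\,\|x_t-x^*\||<\infty$, and the goal is to show $\|\nabla f(x_t)\| = \mathscr{O}\!\left(\tfrac{1}{\lambda_t}\right)$ as well (a rate at least as fast, possibly with a larger constant, which is exactly what $\mathscr{O}$ permits).

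Next I would record the two structural facts being used. Since $f$ is convex and $x^*$ is its minimiser, the first-order optimality condition gives $\nabla f(x^*) = 0$. Since $\nabla f$ is $L$-Lipschitz (equation \eqref{2}), we have, for every $t>0$,
\begin{equation*}
\|\nabla f(x_t)\| = \|\nabla f(x_t) - \nabla f(x^*)\| \leq L\,\|x_t - x^*\|.
\end{equation*}

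Then I would push this inequality through the $\varlimsup$. Multiplying by $\lambda_t>0$ and taking superior limits,
\begin{equation*}
\varlimsup_{t\to\infty}\big|\lambda_t\,\|\nabla f(x_t)\|\big| \;\leq\; L\,\varlimsup_{t\to\infty}\big|\lambda_t\,\|x_t - x^*\|\big| \;<\;\infty,
\end{equation*}
where finiteness of the right-hand side is precisely the hypothesis. By the definition \eqref{def}, this says $\|\nabla f(x_t)\| = \mathscr{O}\!\left(\tfrac{1}{\lambda_t}\right)$, i.e.\ the gradient converges at least at the stated rate, which is what was required.

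There is no serious obstacle here; the only point needing a word of care is the phrase ``at least at that rate'': the constant governing the gradient's decay is $L$ times that of the iterate decay, so the two rates agree in the $\mathscr{O}$-sense but not necessarily with the same constant --- this is consistent with the loose-bound conventions of \S\ref{alb} and with relation \eqref{omega} that $\mathscr{O}(\lambda\,a(t)) = \sgn(\lambda)\,\mathscr{O}(a(t))$. If one wanted the vector-valued statement rather than the norm statement, I would additionally invoke Lemma \ref{vec} to pass from $\|\nabla f(x_t)\| = \mathscr{O}(1/\lambda_t)$ to $\nabla f(x_t)\asympl\mathscr{O}(1/\lambda_t)$, but for the claim as stated the scalar argument above suffices.
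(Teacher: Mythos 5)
Your proposal is correct and follows essentially the same route as the paper's own proof: both apply the Lipschitz bound \eqref{2} with $\nabla f(x^*)=0$ to get $\|\nabla f(x_t)\|\leq L\|x_t-x^*\|$ and then absorb the constant $L$ into the rate. Your version is slightly more careful in making the $\varlimsup$ step and the role of the optimality condition explicit, but the argument is the same.
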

\begin{proof}
From (\ref{2}) we know that,
\begin{equation}
     \|\nabla f(x_t)-\nabla{f}(x^*)\|\leq L\|x_t-x^*\|,
\end{equation}
If the term $x-x^*$ converges at the rate $\frac{1}{\lambda_t}$, where $\lambda_t$ represents the decay rate (increasing function), then for a convex function with $x^*$ as the optima, there must exist a constant $C$, such that
        \begin{equation}
        \begin{gathered}
        \|\nabla{f}(x_t)\| \leq
        \frac{C}{\lambda_t}\quad\text{where}\;C\in\mathbb{R^+}\\
        \implies \|\nabla{f}(x)\| = \mathcal{O}\left(\frac{1}{\lambda_t}\right),
    \end{gathered}
\end{equation}
as required.
\end{proof}
\begin{lemma}\label{9}
For a convex cost function with Lipschitz continuous gradients, if $\|x_t-x^*\|$ converges at a rate of $\frac{1}{\mu_t}$, then $\dot{\mu}_t\langle x_t-x^*, \dot{x}_t\rangle$ is also convergent. 
\end{lemma}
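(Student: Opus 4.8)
The plan is to exploit the same two-ingredient structure used throughout this section: the hypothesis controls $\|x_t - x^*\|$, and Lemma \ref{grab} (together with $L$-smoothness) upgrades this to control of $\|\nabla f(x_t)\|$. The quantity $\dot\mu_t \langle x_t - x^*, \dot x_t\rangle$ is a product, so I would bound each factor and then argue the product converges. First I would set $\mathcal{Q}_t = x_t - x^*$, so $\dot x_t = \dot{\mathcal{Q}}_t$ and the hypothesis reads $\|\mathcal{Q}_t\| = \mathscr{O}(1/\mu_t)$; by Lemma \ref{vec} we also get $\mathcal{Q}_t \asympl \mathscr{O}(1/\mu_t)$ componentwise. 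Next, to handle the velocity factor $\dot x_t$, I would invoke Lemma \ref{bi} (with $\nu = \mu$, $f$ playing the role of $\|\mathcal{Q}_t\|$ or each component $f_i$), which gives $\dot f_i \asympl -\mathscr{O}(\dot\mu_t/\mu_t^2)$; summing the $d$ components yields $\dot x_t \asympl \mathscr{O}(\dot\mu_t/\mu_t^2)$.

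Then I would multiply the bounds through the algebra of $\mathscr{O}$ from \eqref{omega}: using Cauchy–Schwarz on the inner product,
\begin{equation*}
\bigl|\langle x_t - x^*, \dot x_t\rangle\bigr| \le \|x_t - x^*\|\,\|\dot x_t\| \asympl \mathscr{O}\!\left(\frac{1}{\mu_t}\right)\mathscr{O}\!\left(\frac{\dot\mu_t}{\mu_t^2}\right) = \mathscr{O}\!\left(\frac{\dot\mu_t}{\mu_t^3}\right),
\end{equation*}
and hence
\begin{equation*}
\dot\mu_t \langle x_t - x^*, \dot x_t\rangle \asympl \mathscr{O}\!\left(\frac{\dot\mu_t^2}{\mu_t^3}\right).
\end{equation*}
Since $\mu_t$ is an increasing (unbounded) decay rate and $\dot\mu_t$ grows no faster than polynomially relative to $\mu_t$ in the cases of interest, $\dot\mu_t^2/\mu_t^3 \to 0$, so $\varlimsup_{t\to\infty}|\dot\mu_t \langle x_t-x^*,\dot x_t\rangle|$ is finite (indeed zero), which is exactly the asymptotic-loose-bound notion of convergence in \eqref{def}. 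Alternatively, and perhaps more cleanly, I would observe that $\tfrac{d}{dt}\|x_t-x^*\|^2 = 2\langle x_t-x^*,\dot x_t\rangle$, so $\langle x_t-x^*,\dot x_t\rangle$ is (half) the derivative of a bounded, convergent, Lipschitz quantity and therefore tends to $0$; multiplying by $\dot\mu_t$ and using that $\|x_t-x^*\|^2$ converges at rate $1/\mu_t^2$ via Lemma \ref{bi} applied to $\|x_t-x^*\|^2$ pins down the rate of the product.

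The main obstacle I anticipate is not the inequality chain but making the statement ``$\dot\mu_t \langle \cdots\rangle$ is convergent'' precise: the lemma does not specify a target value or a rate, so I must decide whether ``convergent'' means ``$\to 0$'' or ``$\mathscr{O}(1)$ in the loose-bound sense.'' I would adopt the latter (consistent with \eqref{def} and the surrounding lemmas) and show $\varlimsup_{t\to\infty}|\dot\mu_t\langle x_t-x^*,\dot x_t\rangle| < \infty$, which requires only that $\dot\mu_t$ does not overwhelm the decay $\dot\mu_t/\mu_t^3$ coming from the two factors — a mild regularity condition on the decay rate that is implicitly in force (as in Lemma \ref{fund}, where $\beta < o(1/t)$). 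Establishing and stating that regularity hypothesis cleanly, rather than the estimates themselves, is the delicate point.
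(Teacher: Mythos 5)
Your proposal is correct in substance and shares the paper's overall skeleton --- Cauchy--Schwarz on the inner product, a decay bound on each factor, and then a growth condition on $\mu_t$ to tame the multiplier $\dot{\mu}_t$ --- but the middle step is genuinely different. The paper bounds the velocity by appealing to Lyapunov's second theorem to claim that $\|\dot{x}_t\|$ decays at the \emph{same} rate $1/\mu_t$ as the position, so the inner product is $C/\mu_t^2$ and the final quantity is controlled by $\dot{\mu}_t/\mu_t^2$, which the paper then kills by arguing (via $\frac{d}{dt}\mu_t^2 = 2\mu_t\dot{\mu}_t$) that $\mu_t^2 > \dot{\mu}_t$ for large $t$. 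You instead invoke Lemma \ref{bi} to get $\dot{x}_t \asympl \mathscr{O}(\dot{\mu}_t/\mu_t^2)$, which lands you on $\mathscr{O}(\dot{\mu}_t^2/\mu_t^3)$; note this is \emph{not} implied by the paper's condition $\dot{\mu}_t < \mu_t^2$ (take $\dot{\mu}_t \sim \mu_t^{1.8}$), so your route trades one unproved regularity hypothesis for a slightly stronger one --- though both hold trivially for the polynomial and exponential convergers actually used in Section \ref{sec5}. Your explicit flagging of this hypothesis is more honest than the paper's treatment, which derives its version from monotonicity alone in a way that does not hold for arbitrary increasing $\mu_t$. Your alternative observation that $\langle x_t-x^*,\dot{x}_t\rangle = \tfrac{1}{2}\frac{d}{dt}\|x_t-x^*\|^2$ is the derivative of a bounded convergent quantity is a cleaner entry point than either version and does not appear in the paper; if you pursue it, you still need the rate (not just convergence to zero) of that derivative before multiplying by the unbounded factor $\dot{\mu}_t$, so the regularity condition on $\mu_t$ does not disappear.
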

\begin{proof}
From Lyapunov's Second theorem we know that if for a system's solution $x(t)$, which is globally asymptotically stable (lemma \ref{1}) and converges at a particular rate to $x^*$, then the derivative is also stable and convergent at that rate \cite{Kalman1960}. This means that if $\|x_t-x^*\|$ converges at a rate of $\frac{1}{\mu_t}$, then $\|\dot{x}(t)\|$ converges at that rate as well. We have by Cauchy-Schwarz theorem, that as $t>0$ grows larger,
\begin{equation}
        \langle x_t-x^*, \dot{x}\rangle \leq \|x_t-x^*\|\|\dot{x}_t\| = \frac{C}{\mu^2_t},\;C\in\mathbb{R}^{+}.
    \end{equation}
Now, we know from mean value theorem that for a strictly positive and increasing function $\mu_t$, the function $g_t = \mu^2_t$ increases arbitrarily as
    \begin{equation}
        \dot{g}_t = 2\mu_t\dot{\mu}_t,
    \end{equation}
which implies that it increases arbitrarily faster by $\mu_t$ by a factor of $2\mu_t$. This implies that for large values of $t>0$, $$\mu^2_t>\dot{\mu}_t.$$ It follows that
\begin{equation}
    \varlimsup_{t\rightarrow\infty}\dot{\mu}_t\langle x_t-x^*, \dot{x}_t\rangle = 0,
\end{equation}
as required.
\end{proof}
\subsection{Method of envelope convergence}
In \cite{Whiplash}, we defined the converger $\mathcal{P}(t)>0$, such that for a convex cost function $f$, we have a convergence rate of $\mathcal{O}\left(\frac{1}{\mathcal{P}(t)}\right)$. This converger must be defined $\forall\;t>0$, such that $\mathcal{P}(t)\rightarrow 0$ as $t\rightarrow \infty$. A simple way of numerically verifying these results is by \textit{enveloping} the output. We show a convergence rate for $x(t)$; i.e. we define $y(t) = \mathcal{P}(t)(x(t)-x^*)$, and demonstrate its numerical convergence. The intuition is that if $y(t)$ is convergent, then $x(t)-x^*$ must converge at least at the rate of $\mathcal{O}\left(\frac{1}{\mathcal{P}(t)}\right)$.

In this method, we are extending the concept of using a solution to investigate a convergence rate for its inertial gradient system, as presented in \cite{attouch2021fast}. We consider an inertial gradient dynamical system of the form:
\begin{equation}
    \ddot{x}(t)+(1+\alpha(t)) \dot{x}(t)+\nabla{f}(x(t)) = 0,
    \label{gen}
\end{equation}
The scalar function $\alpha (t)$ is a closed-loop control law.
We describe the envelope convergence method as the following: 
\begin{enumerate}
\item We check for a converger $\mathcal{P}(t)$ for the system (\ref{gen}), such that $y(t) = \mathcal{P}(t)x(t)$ is numerically convergent. 
\item We consider two types of rates: exponential (linear) and polynomial (sub-linear), characterised by $\eta>0$ as the \textit{strength of convergence} in continuous-time. This means that an exponential rate $\mathcal{O}\left(e^{-\eta t}\right)$ is stronger than a polynomial rate  $\mathcal{O}\left(\frac{1}{t^{\eta}}\right)$. We start with small values of the exponential rate $\eta>0$. If that leads to instability, we consider polynomial rates. We have devised a simple strategy to predict these convergers computationally as follows:
\begin{itemize}
\item \textit{Exponential rate}: We start with $\eta = 0.1$ and keep running simulations for increasing values of $\eta$ by steps of size $0.05$. If a transition to stability exists in this region, it is sharp and moves from bounded to divergent solutions within a range of $0.01$. Hence, once the transition is observed, we vary $\eta$ by steps of size $0.001$. We check different starting conditions if stability is observed for a critical range.
\item \textit{Polynomial rate}: We start with $\eta = 0.5$ and keep running simulations for increasing values of $\eta$ by steps of size $0.1$. If a transition to stability exists in this region, it is sharp and moves from bounded oscillations to divergent within a range of $0.05$. Hence, once the transition is observed, we vary $\eta$ by steps of size $0.01$. We check different starting conditions if stability is observed for a critical range.
\end{itemize}
    \item Once we have verified this, we can define a system such that there exists an energy function $\mathcal{E}(t)$, which for a particular integral anchor constraint, proves the convergence rate of the objective value. The envelope convergence method has been visualized in Figure: \ref{fig: flow}. 
\end{enumerate} 
\begin{remark}
    It is important to note that the only short coming of this manual tuning process is progressively checking convergence for smaller step-sizes. It is possible that if the step-size is decreased, as long as the system provides an output without having any discontinuity in integration, it is possible that the system (especially if it is ill-conditioned) might converge for stronger rates of convergence.
\end{remark}
\begin{figure}
    \centering
    \includegraphics[width = 0.65\columnwidth]{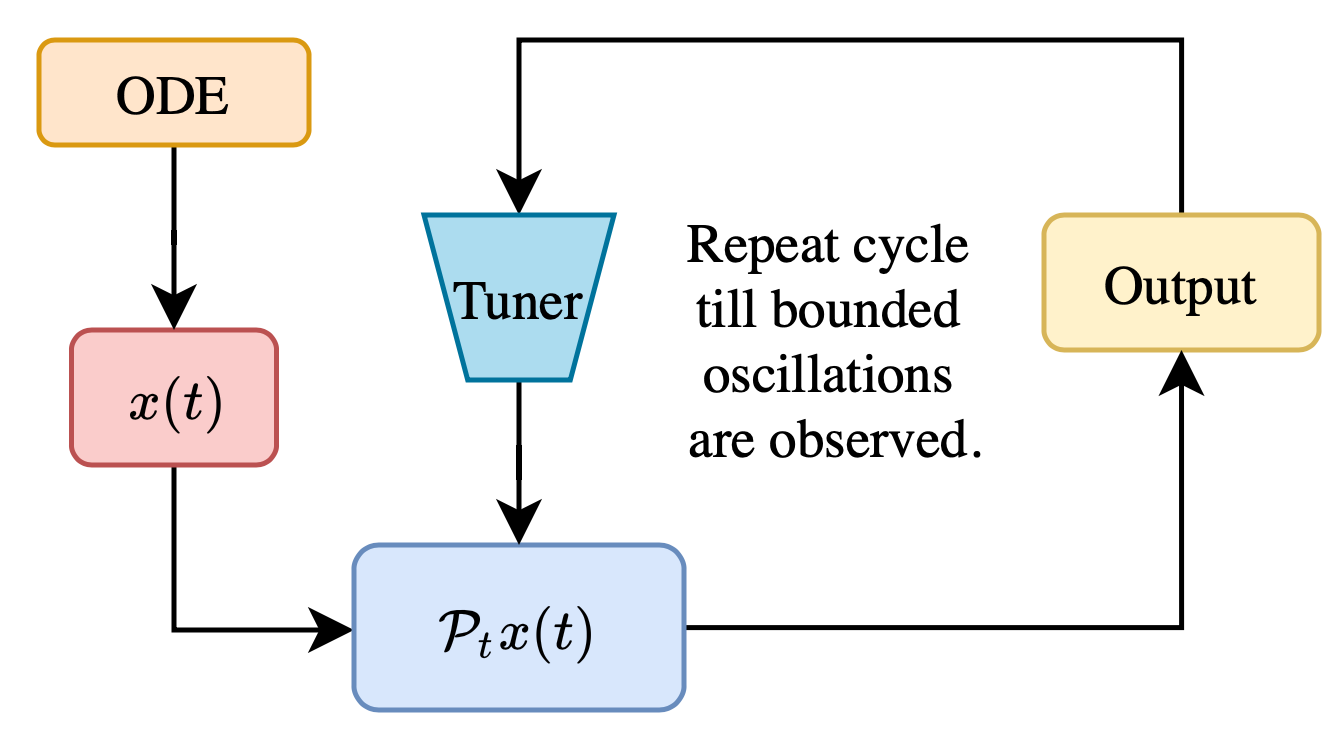}
    \caption{Method of envelope convergence where we observe transition and tune or update the strength manually to find appropriate convergers.}
    \label{fig: flow}
\end{figure}
    \begin{theorem}{(Lyapunov Rate Method)}\label{tt}
    For a twice differentiable convex cost function $f$, the system (\ref{gen}) converges with a rate of:
    \begin{equation}
        f(x)-f^* = \mathcal{O}\left(\frac{1}{\mathcal{P}(t)}\right),
    \end{equation} 
    for a converger $\mathcal{P}(t)>0$ and $\dot{\mathcal{P}}(t)>0$, such that $\mathcal{P}(t) x(t)$ is bounded and convergent, if there exists a constant $c \geq 0$, for all $t>0$ and for every solution to (\ref{gen}), $I_t+c\geq 0$ where
    \begin{equation}
    I_t = \int_0^t \left(\langle \alpha\mathcal{P}(s) \dot{x},x-x^*\rangle -\langle \mathcal{P}(s) \nabla f(x),\dot{x}\rangle - \frac{\dot{\mathcal{P}}(s)}{2}\|x-x^*+\dot{x}\|^2 \right)ds.
    \label{int}
    \end{equation}  
    \end{theorem}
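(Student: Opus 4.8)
The plan is to build an explicit time-scaled Lyapunov function whose derivative, after inserting the dynamics and a single use of convexity, collapses onto the integrand of $I_t$, so that the anchor hypothesis $I_t+c\ge 0$ becomes a uniform bound on the energy. Concretely I would take $\mathcal{E}(t) = \mathcal{P}(t)\,(f(x(t))-f^*) + \tfrac{1}{2}\|x(t)-x^*+\dot x(t)\|^2$, i.e.\ the ``physical'' energy $\tfrac{1}{2}\|\dot x\|^2+f(x)-f^*$ of Lemma \ref{1} re-weighted and augmented in the spirit of the Lyapunov constructions of Su et al.\ \cite{su2015differential} and Wibisono et al.\ \cite{wibi}, but now with the \emph{undetermined} converger $\mathcal{P}$ in place of a fixed power of $t$ --- which is exactly why a residual, non-absorbable integral has to appear.

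First I would differentiate $\mathcal{E}$ along a solution of (\ref{gen}), eliminating $\ddot x$ through $\ddot x=-(1+\alpha)\dot x-\nabla f(x)$; this produces the terms $\dot{\mathcal{P}}(f-f^*)$, $\mathcal{P}\langle\nabla f(x),\dot x\rangle$, $-\alpha\langle x-x^*,\dot x\rangle$, $-\alpha\|\dot x\|^2$, $-\langle x-x^*,\nabla f(x)\rangle$ and $-\langle\dot x,\nabla f(x)\rangle$. Next I would apply the convex under-estimator (\ref{bomb}) in the form $\langle\nabla f(x),x-x^*\rangle\ge f(x)-f^*$ (legitimate since $\mathcal{P},\dot{\mathcal{P}}>0$) to dominate the first-order part, and then regroup. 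Upon integrating from $0$ to $t$, the terms that neither telescope nor have a fixed sign assemble exactly into $-I_t$, while the remainder --- the gap in the convexity estimate together with $-\int_0^t\alpha\|\dot x\|^2\,ds$ --- is manifestly nonpositive; hence $\mathcal{E}(t)\le\mathcal{E}(0)-I_t$ for every $t>0$ and every solution of (\ref{gen}).

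The hypothesis $I_t+c\ge 0$ then yields $\mathcal{E}(t)\le\mathcal{E}(0)+c<\infty$ uniformly in $t$. To extract the rate I would invoke two standing facts: the global-asymptotic-stability argument of Lemma \ref{1} (valid for $1+\alpha(t)>0$, as already used in Lemma \ref{m}) gives $\|\dot x(t)\|\to 0$ and $\|x(t)-x^*\|$ bounded, and the theorem's own hypothesis gives $\mathcal{P}(t)(x(t)-x^*)$ --- the $y(t)$ of the envelope method --- bounded and convergent (via Lemma \ref{vec} in the vector case); together these control the quadratic term of $\mathcal{E}$, and should that term emerge after regrouping with a negative $\mathcal{P}$-weight one instead uses $\mathcal{P}\|x-x^*\|^2\le C/\mathcal{P}\to 0$. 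Consequently $0\le\mathcal{P}(t)\,(f(x(t))-f^*)$ is bounded above, i.e.\ $\varlimsup_{t\to\infty}|\mathcal{P}(t)(f(x(t))-f^*)|<\infty$, which is precisely $f(x)-f^*=\mathcal{O}(1/\mathcal{P}(t))$ in the sense fixed in the paper.

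The main obstacle is the bookkeeping in the middle step: choosing the quadratic correction in $\mathcal{E}$ so that, after the single convexity estimate, the sign-indefinite residue is \emph{exactly} the integrand $\langle\alpha\mathcal{P}\dot x,x-x^*\rangle-\langle\mathcal{P}\nabla f(x),\dot x\rangle-\tfrac{\dot{\mathcal{P}}}{2}\|x-x^*+\dot x\|^2$ of $I_t$ and nothing else, with every other term sign-definite. For a fixed polynomial friction these cross-terms telescope away and one recovers the classical accelerated rates with no leftover integral; because $\alpha(t)$ here is a closed-loop control law they cannot be absorbed, and the real content of the theorem is the assertion that this specific $I_t$ is the \emph{complete} obstruction, so that controlling its lower growth is enough to certify the rate.
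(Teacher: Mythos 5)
Your overall strategy is the paper's: a time-scaled energy with a potential term $\mathcal{P}(t)(f(x)-f^*)$ and a mixed quadratic term in $x-x^*+\dot x$, differentiated along (\ref{gen}), with a single application of convexity and with $I_t$ playing the role of the exact sign-indefinite residue so that $I_t+c\ge 0$ bounds the energy and $\dot{\mathcal P}>0$ delivers the rate. Whether you carry $I_t+c$ inside the energy as ``stored energy'' (as the paper does, writing $\mathcal{E}_t=\mathcal{P}(f-f^*)+\frac{\mathcal{P}}{2}\|x-x^*+\dot x\|^2+I_t+c$ and showing $\dot{\mathcal{E}}_t\le 0$) or outside (your $\mathcal{E}(t)\le\mathcal{E}(0)-I_t$) is immaterial.

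The gap is in the one step you yourself flag as the ``main obstacle'': your concrete choice $\mathcal{E}(t)=\mathcal{P}(t)(f(x)-f^*)+\tfrac12\|x-x^*+\dot x\|^2$ cannot produce the integrand of $I_t$ as its residue. Every term of that integrand carries a weight of $\mathcal{P}(s)$ or $\dot{\mathcal{P}}(s)$: the cross terms are $\langle\alpha\mathcal{P}\dot x,x-x^*\rangle$ and $\langle\mathcal{P}\nabla f(x),\dot x\rangle$, and there is a term $-\tfrac{\dot{\mathcal{P}}}{2}\|x-x^*+\dot x\|^2$ that can only arise from differentiating a $\mathcal{P}$-weighted quadratic. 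With the unweighted $\tfrac12\|\cdot\|^2$ you list the derivative contributions correctly as $-\alpha\langle x-x^*,\dot x\rangle$, $-\langle x-x^*,\nabla f(x)\rangle$, etc., all without a $\mathcal{P}$ factor, so they do not cancel against $\dot I_t$, and no $\dot{\mathcal{P}}\|x-x^*+\dot x\|^2$ term appears at all; moreover the unweighted $-\langle x-x^*,\nabla f(x)\rangle$ cannot be paired with $\dot{\mathcal{P}}(t)(f(x)-f^*)$ in the convexity step, since the weights disagree. The fix is the paper's kinetic term $\tfrac{\mathcal{P}(t)}{2}\|x-x^*+\dot x\|^2$: with it, substituting $\dot x+\ddot x=-\alpha\dot x-\nabla f(x)$ and the definition of $\dot I_t$ collapses $\dot{\mathcal{E}}_t$ to $\dot{\mathcal{P}}(t)\bigl(f(x)-f^*-\langle\nabla f(x),x-x^*\rangle\bigr)\le 0$ by (\ref{jen}). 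Your closing appeal to Lemma \ref{1}, Lemma \ref{vec} and the boundedness of $\mathcal{P}(t)x(t)$ to control the quadratic term is then unnecessary: once $\mathcal{E}_t\le\mathcal{E}_0$, the quadratic and stored-energy summands are nonnegative and are simply dropped, giving $\mathcal{P}(t)(f(x)-f^*)\le\mathcal{E}_0$ directly.
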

    \vspace{-2em}
    \begin{proof}
    From (\ref{jen}), we know that
    \begin{equation*}
        f(x)-f(x^*)\leq \langle \nabla f(x), x-x^* \rangle.
    \end{equation*}
    Now, for the system (\ref{wp}), we define the energy function as:
    \begin{align}
        \mathcal{E}_t = \underbrace{\mathcal{P}(t)(f(x)-f^*)}_{\text{Potential energy}}+\underbrace{\frac{\mathcal{P}(t)}{2}\|x-x^{*}+\dot{x}\|^{2}}_{\text{Mixed Kinetic energy}}+\underbrace{I_t+c}_{\text{Stored energy}}\geq 0.
    \end{align}
Taking its time derivative, we obtain
\begin{gather*}
        \dot{\mathcal{E}}_{t}=\dot{\mathcal{P}}(t)(f(x)-f^*)+\mathcal{P}(t) \langle\nabla f(x), \dot{x}\rangle+\frac{\dot{\mathcal{P}}(t)}{2}\|x-x^*+\dot{x}\|^2
        +\mathcal{P}(t)\langle x- x^* + \dot{x},\dot{x}+\ddot{x}\rangle+\dot{I_t}.
\end{gather*}
Using (\ref{gen}) and rearranging terms, we obtain
    \begin{gather*}
        \dot{\mathcal{E}}_{t}=\dot{\mathcal{P}}(t)(f(x)-f^*-\left\langle\nabla f(x), x-x^{*}\right\rangle)+ \frac{\dot{\mathcal{P}}(t)}{2}\|x-x^*+\dot{x}\|^2-\langle \mathcal{P}(t)\alpha \dot{x}-\dot{\mathcal{P}}(t)\nabla f(x),x-x^*\rangle+\dot{I_t}.
        \end{gather*}
          
        Now, by definition
        \begin{equation}
            \dot{I_t} = \langle \alpha\mathcal{P}(t) \dot{x},x-x^*\rangle - \langle \mathcal{P}(t) \nabla f(x),\dot{x}\rangle - \frac{\dot{\mathcal{P}}(t)}{2}\|x-x^*+\dot{x}\|^2 \quad \forall\,t>0.
        \end{equation}
         Therefore, 
        \begin{equation}
            \dot{\mathcal{E}}_{t}=\dot{\mathcal{P}}(t)\left(f(x)-f^*-\left\langle\nabla f(x), x-x^{*}\right\rangle\right)\leq 0,
        \end{equation}
        $\forall\,t>0$, as $\dot{\mathcal{P}}(t)>0$. Hence, upon integration, we obtain
        \begin{equation*}
            f(x)-f^* \leq \frac{\mathcal{E}_0}{\mathcal{P}(t)} = \mathcal{O}\left( \frac{1}{\mathcal{P}(t)}\right),
        \end{equation*}
as required. 
This implies that if the functional $I_t$ has a loose bound, then there must exists a constant $c$ for which $I_t+c\geq 0$. It follows that if $I_t(x,\dot{x}, t)+c\geq 0$, for every solution $x(t)$ to the dynamics (\ref{gen}), then $\mathcal{E}_t\geq 0$ and $\dot{\mathcal{E}}_t\leq 0$ for all $t\geq 0$.
\end{proof} 
\begin{remark}
    In order to find such an $I_t$, we need to have additional assumptions on the asymptotic behaviour of the terms in $I_t$. Essentially, using the bounded nature of the terms in the energy function and the convexity of the function itself, we have proven using the smoothness assumptions and a form for the convergence rate of the system's solutions, the loose bound rate of convergence, as described in the next subsection.
\end{remark}
\subsection{Integral anchor constraint}
To consider a loose bound on the functional $I_t$, we use an asymptotic loose bound of the states $x(t)$ and $\dot{x}(t)$. Now, we use the asymptotic loose bound estimates on each term in the integrand to check if all the generalised manufactured solutions (form of convergence) \cite{attouch2021fast} for a particular choice of converger satisfy the integral anchor constraint. We consider, as before, for such an analysis the two cases: polynomial and exponential convergence settings. The following inferences lead to this hypothesis:
\begin{enumerate}
    \item All solutions of a Lyapunov stable system must be bounded and convergent.
    \item Since the gradient is Lipschitz continuous, there must exist (implicit if not explicit) solutions to the system (\ref{gen}).
    \item The solutions are at least $\mathcal{C}^2$ smooth.
    \item This implies that the must be continuously integrable in $t\in[0,\infty)$.
    \item It follows that the nature of solution is implicitly time-dependent, which implies that all solutions satisfy $\|x(t) - x^*\| = \mathscr{O}\left(\frac{1}{\lambda(t)}\right)$ for an arbitrary increasing function $\lambda(t)>0$ for all $t \geq 0$.
\end{enumerate}
Now, by lemma \ref{vec}, we therefore have $x(t)-x^*\asympl \mathscr{O} \left(\frac{1}{\lambda(t)}\right)$, which implies, using the convexity of the cost function, that $f(x)-f^* \asympl \mathscr{O}\left(\frac{1}{\lambda(t)}\right)$. From (\ref{int}), upon using Cauchy-Schwarz inequality, we find that the upper bound on the integral constraint is given as:
\begin{equation}
    I_t \asympl \int_0^t \left(\alpha \mathcal{P}(s)  \|\dot{x}\|\|x-x^*\| -\mathcal{P}(s) \|\nabla f(x)\|\|\dot{x}\| - \frac{\dot{\mathcal{P}}(s)}{2}\left(\|x-x^*\|^2-2\langle x-x^*, \dot{x}\rangle +\|\dot{x}\|^2\right) \right)ds.
\end{equation}
Now by the assumptions on the numerical convergence i.e. if (\ref{system}) numerically converges, using lemmas \ref{grab} and \ref{9}, we know that all the terms in (\ref{int}) are bounded and integrable by our inferences. And hence, they must be implicitly time-driven, asymptotically. Thus, if we can assume the nature of convergence, we can show that the Lyapunov rate method is satisfied. The following theorem considers a loose bound on the integrand, using necessary assumptions. We consider the two cases of convergence as forms without strictly asserting the specific rates.
\begin{theorem}
The integral $I_t$, which is defined for the dynamical system (\ref{gen}) for all $t>0$, for a $\mathcal{C}^2$ convex cost function $f(x)$, given by (\ref{int}), is asymptotically loose bounded for every possible solution to (\ref{gen}), i.e.
\begin{equation}
    I_t\geq c>0, 
\end{equation} if the following assumptions hold.
\end{theorem}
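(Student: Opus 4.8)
The plan is to treat $I_t$ as the sum of its three integrand pieces and to control each one under the standing envelope hypothesis stated just before the theorem, namely that every admissible solution obeys $\|x(t)-x^*\| = \mathscr{O}(1/\lambda(t))$ for some increasing $\lambda(t)>0$. First I would record the rates this forces on the other quantities appearing in (\ref{int}): by lemma \ref{grab} the gradient satisfies $\|\nabla f(x)\| = \mathcal{O}(1/\lambda)$, and by the Lyapunov argument in the proof of lemma \ref{9} (the derivative is stable and convergent at the same rate) the velocity satisfies $\|\dot{x}\| = \mathscr{O}(1/\lambda)$ as well. Feeding these into Cauchy--Schwarz and the loose-bound algebra (\ref{omega}) — exactly as was done for the upper envelope preceding the statement — produces term-by-term asymptotic rates for $\alpha\mathcal{P}\langle\dot{x},x-x^*\rangle$, for $\mathcal{P}\langle\nabla f,\dot{x}\rangle$, and for $\tfrac{\dot{\mathcal{P}}}{2}\|x-x^*+\dot{x}\|^2$, which are what I will integrate.

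Next I would split $I_t$ so that its sign structure is exposed, since the statement demands a \emph{strictly positive} floor rather than mere boundedness. Writing $\langle\nabla f,\dot{x}\rangle = \tfrac{d}{ds}(f(x)-f^*)$ and integrating the middle term by parts converts $-\int_0^t \mathcal{P}\langle\nabla f,\dot{x}\rangle\,ds$ into $-\mathcal{P}(t)(f(x(t))-f^*)+\mathcal{P}(0)(f(x_0)-f^*)+\int_0^t \dot{\mathcal{P}}(f(x)-f^*)\,ds$; because $\dot{\mathcal{P}}>0$ and $f-f^*\ge 0$, this last integral is a monotone nonnegative accumulation, and it is the candidate source of the strictly positive constant $c$. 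For the remaining pieces I would invoke lemma \ref{fund}: under the assumptions that each of their rate functions is majorised by $o(1/t)$, the associated integrals are absolutely bounded on $[0,\infty)$ and therefore contribute only a finite, sign-indefinite perturbation. Combining, $I_t$ decomposes as a monotone nonnegative part plus a uniformly bounded remainder, which already yields the asymptotic loose boundedness (finite $\varlimsup$) asserted in the theorem.

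Finally, to reach $I_t \ge c > 0$ itself, I would fix $c$ strictly below the eventual level of the monotone accumulation and strictly above the worst negative excursion of the bounded remainder, so that $I_t \ge c$ holds past an initial transient; the compact transient interval $0<\tau\le t_0$ is handled by continuity of the integrand together with the explicit initial data, choosing $c$ no larger than $\inf_{0<\tau\le t_0} I_\tau$ there. The main obstacle I anticipate is precisely reconciling the two simultaneous demands the statement places on $I_t$ — a finite $\varlimsup$ and a strictly positive lower bound — because the by-parts accumulation $\int_0^t \dot{\mathcal{P}}(f(x)-f^*)\,ds$ can in principle grow without bound. The forthcoming assumptions must therefore force the boundary term $\mathcal{P}(t)(f(x(t))-f^*)$ together with the first and third integrands to absorb that growth while leaving a net positive residue, and verifying this cancellation uniformly over \emph{every} admissible converger $\mathcal{P}$ and control law $\alpha$ (not merely for a single manufactured solution) is the delicate step where the hypotheses relating the rates of $\mathcal{P}$, $\alpha$ and $\lambda$ must be used in full.
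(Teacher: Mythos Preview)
Your route diverges from the paper's in two substantive ways, and both create avoidable difficulties.

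First, the paper does \emph{not} integrate by parts. It keeps the three integrand terms of (\ref{int}) as they are and substitutes asymptotic rates for every factor, then uses the loose--bound algebra (\ref{omega}) to reduce each term to a single power (polynomial case) or exponential (exponential case) in $s$. Under the specific assumptions that follow the statement --- $\mathcal{P}(t)=t^{\eta}$, $\alpha=\mathscr{O}(t^{-p})$, and the critical condition $x-x^*\asympl\mathscr{O}(t^{-\theta})$ with $\theta>\eta/2$ --- every resulting exponent lies strictly below $-1$, so lemma~\ref{fund} makes each piece integrable and $I_t\asympl\mathscr{O}(t^{-\rho})$ for $\rho=2\theta-\eta+\min\{p,1\}>0$ (and analogously $I_t\asympl\mathscr{O}(e^{-\sigma t})$ in Case~II). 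Your by--parts manoeuvre on the middle term manufactures the accumulation $\int_0^t\dot{\mathcal{P}}(f-f^*)\,ds$ and then forces you to cancel it against a boundary term; that is exactly the ``delicate step'' you flag, and the paper simply sidesteps it.

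Second, the rates you extract are one order too weak. The paper uses lemma~\ref{bi}, not lemmas~\ref{grab} and~\ref{9}, to obtain $\dot{x}\asympl -\mathscr{O}(\dot{\nu}/\nu^2)$ and $\nabla f\asympl -\mathscr{O}(\dot{\nu}/\nu^2)$; in the polynomial case this is $\mathscr{O}(t^{-\theta-1})$, not $\mathscr{O}(t^{-\theta})$. That extra factor of $t^{-1}$ is precisely what pushes each integrand below the $o(1/t)$ threshold of lemma~\ref{fund} under $\theta>\eta/2$. With your rates the first term $\alpha\mathcal{P}\langle\dot{x},x-x^*\rangle$ would only be $\mathscr{O}(s^{\eta-2\theta-p})$, which fails the integrability test when $p<1$.

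Finally, you are over--reading the conclusion. The paper does not establish a strictly positive pointwise floor $I_t\ge c>0$ (indeed $I_0=0$); it shows $I_t$ is asymptotically loose bounded in the sense of (\ref{def}), hence bounded, so that some $c\ge 0$ with $I_t+c\ge 0$ exists --- which is exactly the hypothesis Theorem~\ref{tt} consumes. Your effort to manufacture a monotone positive part to secure strict positivity is chasing a stronger statement than what is needed or proved.
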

\textbf{Case I}: \textit{Polynomial convergence setting}: ($t\geq t_0>0$)
\begin{assumption}
$\mathcal{P}(t) = t^{\theta}$ where $\theta>0$.
\end{assumption}
\begin{assumption}
$\alpha (t) = \mathscr{O}\left(\frac{1}{t^{p}}\right)$ where $p\geq 0$.
\end{assumption}
\begin{assumption}
Consider the polynomial converger $\mathcal{P}(t) = t^{\eta}$. The \textbf{critical condition} is:\begin{gather}\label{h1}
x-x^* \asympl \mathscr{O}\left(\frac{1}{t^{\theta}}\right):\;\theta>\frac{\eta}{2}>0.
\end{gather}
\end{assumption}
\begin{proof}\label{leap}
Using lemma \ref{fund} and \ref{bi} and the assumptions above, we have 
\begin{equation}
     \dot{x}(t) \asympl -\mathscr{O}\left(\frac{1}{t^{\theta+1}}\right).
\end{equation}
We know that as $t\rightarrow\infty$, $x\rightarrow x^*$. Using lemma \ref{bi}, we have
\begin{equation}
    \nabla{f}(x) \asympl -\mathscr{O}\left(\frac{1}{t^{\theta+1}}\right),
\end{equation} 
since $f(x)-f^*$ is at least $\mathscr{O}\left(\frac{1}{t^{\theta}}\right)$.
Using (\ref{omega}), we have $\|x-x^*+\dot{x}\|^2 \asympl \mathscr{O}\left(\frac{1}{t^{2\theta+2}}\right)$. Using the assumptions above, we rewrite ($\ref{int}$) as: 
\begin{equation*}
     I_t \asympl -\int_{t_0}^t \mathscr{O}\left(s^{-2\theta+\eta-p-1}\right)+\mathscr{O}\left(s^{\eta-2-2\theta}\right)+\mathscr{O} \left(s^{-2\theta+\eta-3}\right) ds.
\end{equation*}  
Let $\tau_1 = 2\theta-\eta+p$, $\tau_2 = 2\theta-\eta+1$ and $\tau_3 = 2\theta-\eta+2$. Using (\ref{h1}) and Assumption 2, we have all $\tau_1, \tau_2, \tau_3 >0$. Upon integration and using (\ref{omega}), we obtain
\begin{equation}
     I_t \asympl \mathscr{O}\left(\frac{1}{t^{\rho}}\right),
\end{equation}
where $\rho = 2\theta-\eta+\min\{p,1\}$. It follows that for any $\rho$, the functional $I_t$ is asymptotically loose bounded, as required.
\end{proof}
\textbf{Case II}: \textit{Exponential convergence setting} ($t\geq t_0 \geq 0$)
\begin{assumption}
$\mathcal{P}(t) = e^{-\theta t}$ where $ \theta>0$. 
\end{assumption}
\begin{assumption}
$\alpha (t) = \mathscr{O}\left(e^{-pt}\right)$ where $p\geq 0$.
\end{assumption}
\begin{assumption} Consider the exponential converger $\mathcal{P}(t) = e^{\eta t}$ where $0<\eta < 2\theta$. The \textbf{critical condition} is:
\begin{gather}
x-x^* \asympl \mathscr{O}\left(e^{-\theta t}\right):\;\theta>\frac{\eta}{2}>0.
\label{h2}
\end{gather}
\end{assumption}
\begin{proof}
Using assumptions above and using lemmas \ref{fund} and \ref{bi}, we have 
\begin{equation}
    \dot{x}(t) \asympl -\mathscr{O}\left(e^{-\theta t}\right).
\end{equation}
This implies that $\|x-x^*+\dot{x}\|^2 \asympl \mathscr{O}\left(e^{-2\theta t}\right)$, and given that $f(x)-f^*$ is at least $\mathscr{O}\left(e^{-\theta t}\right)$, we have using lemma \ref{bi},
\begin{equation}
    \nabla{f}(x) \asympl -\mathscr{O}\left(e^{-\theta t}\right).
\end{equation}
Now, we rewrite (\ref{int}) as
 
\begin{gather*}
 I_t \asympl \int_{t_0}^t -\left(\mathscr{O}\left(e^{(-p+\eta-2\theta)s}\right)+\mathscr{O}\left(e^{(\eta-2\theta) s}\right)+\mathscr{O}\left(e^{(\eta -2 \theta)s}\right)\right)ds.
\end{gather*}
  
Upon integration, we obtain using (\ref{omega})
\begin{equation}
     I_t \asympl \mathscr{O} \left(e^{-\sigma t}\right).
\end{equation}
where $\sigma = 2\theta-\eta$. From (\ref{h2}) and Assumption 5, we know that $\sigma>0$. Hence, it follows that $I_t$ is asymptotically loose bounded, as required. Thus, we complete the proof of theorem \ref{tt} by proving the integral anchor constraint assumption, to include every possible solution to the dynamics (\ref{gen}), such that we may consider the rate of convergence for the objective $f(x)-f^*$, as required.
\end{proof}
\begin{remark} 
It is important to note that theorem \ref{leap} provides a condition to guarantee the convergence rate immaterial of the strength of convergence. The essential motivation for the envelope convergence method is that \emph{if a system converges locally at a particular rate, it must converge at least at that rate globally.}
\end{remark}
\subsection{Verification of the envelope convergence method for the whiplash dynamics for quadratic costs}\label{ver}
We now apply the envelope convergence method to the whiplash inertial gradient system for a particular class of convex functions. We define this set of functions as $f(x) = \frac{\lambda}{2} x^2$ for every $\lambda\in\mathbb{R}^{+}_{*}$, where $x\in \mathbb{R}$, to verify our envelope method, for which the system (\ref{wp}) reduces to the scalar ODE: 
    \begin{equation}
        \ddot{x}+\left(1+t \dot{x}^2\right)\dot{x}+ \lambda x = 0\quad\text{where}\;x\in\mathbb{R}.
    \label{quad}
    \end{equation}
Using our envelope convergence methodology, we choose a polynomial converger and start with a small value of $\eta$. To demonstrate the envelope convergence method for the system (\ref{quad}), we choose $\eta = 2$, using the polynomial convergence criterion as shown in the Figure: \ref{fig:pm}, which holds for every $\lambda\in\mathbb{R}^{+}_{*}$. 
\begin{figure}
    \centering
    \includegraphics[width = 0.65\columnwidth]{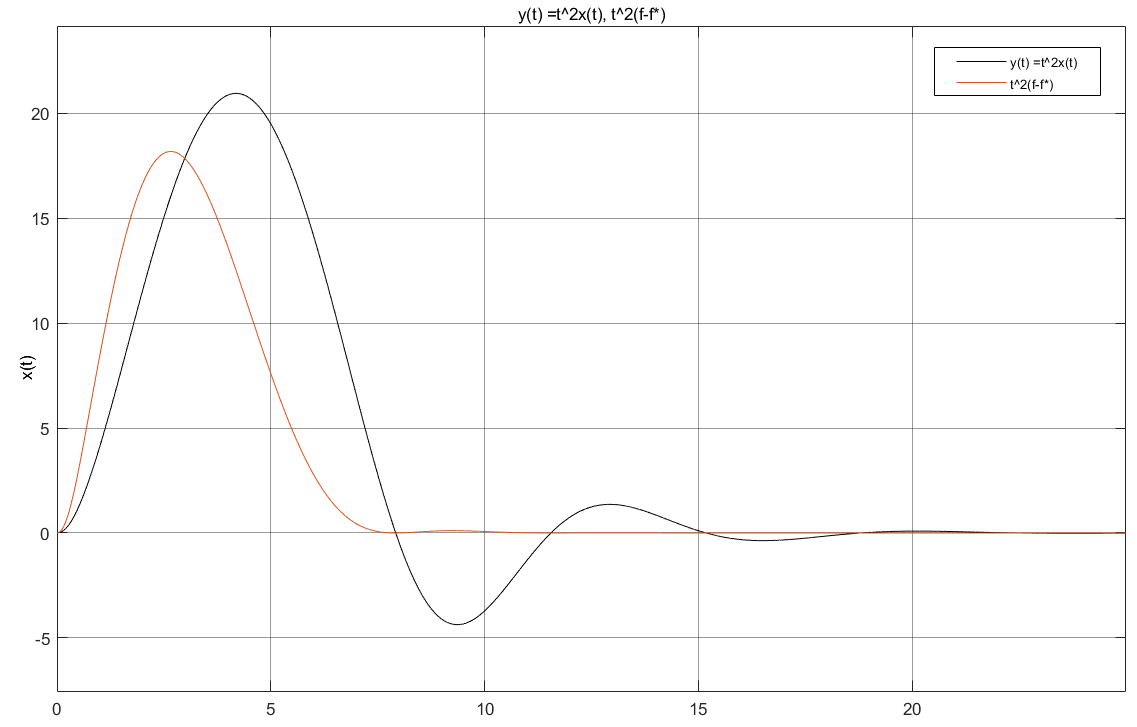}
    \caption{Verifying a polynomial convergence rate of $\mathcal{O}\left(\frac{1}{t^2}\right)$ with the cost function $f(x) = \frac{\lambda}{2} x^2$. Experimentally verified for variations on $\lambda$.}
    \label{fig:pm}
\end{figure}
For this case, we observe the transition to instability for the cost $\tilde{f}$ at $\eta = 0.495$ using the exponential rate as shown in Figure: \ref{fig:exp}.
\begin{figure}
    \centering
    \includegraphics[width = 0.65\columnwidth]{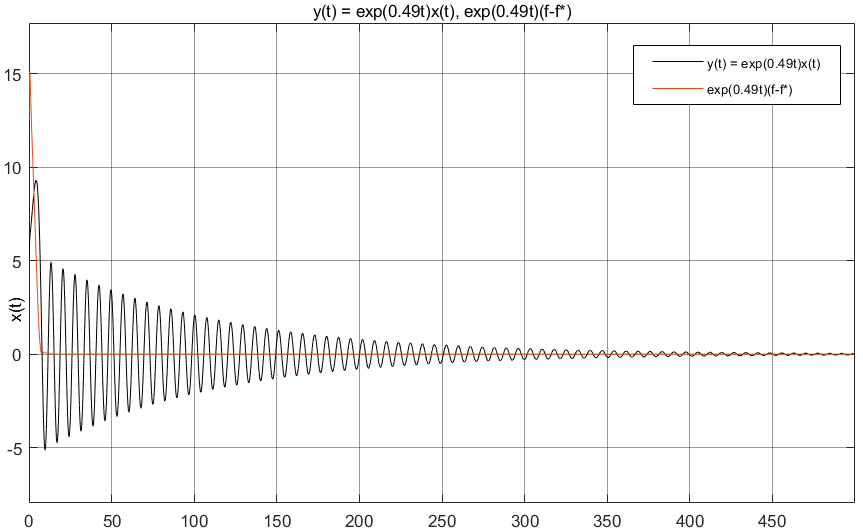}
    \caption{A numerical convergence rate of $\mathcal{O}\left(e^{-0.495t}\right)$ is verified for the whiplash gradient descent for the cost function, $\tilde{f}(x) = \frac{1}{2}x^2$.}
    \label{fig:exp}
\end{figure}
Since, we have found using computational methods\footnote{All numerical experiments have been performed in the SIMULINK\textsuperscript{\texttrademark} environment, using the \href{https://www.mathworks.com/help/simulink/gui/solver.html}{\color{blue}\texttt{Euler ode1 solver}} with a fixed step-size of 0.001.}, a suitable $\mathcal{P}(t) x(t)$, which converges, it is sufficient, therefore, to confirm the convergence rate of $\mathcal{O}\left(\frac{1}{t^2}\right)$ for the cost function $f = \lambda x^2$, or a rate of $\mathcal{O}\left(e^{-0.495t}\right)$ for the cost function $\tilde{f} = \frac{1}{2} x^2$, by showing that for the dynamics (\ref{quad}), there exists an asymptotic loosely bound solution such that it satisfies the design conditions (\ref{h1}) and (\ref{h2}) respectively. For our case, we thus have:
\begin{equation}
\begin{gathered}
    f(x)-f^* = \frac{\lambda}{2}x^2: x(t) = \mathscr{O}\left(\frac{1}{t^{\theta}}\right)\;\forall\;\theta>1.5,\\
    \tilde{f}(x)-\tilde{f}^* = \frac{1}{2}x^2: \tilde{x}(t) = \mathscr{O}\left(e^{-\tilde{\theta} t}\right)\;\forall\;\tilde{\theta}>0.2475.
\end{gathered}
\end{equation}
This means, using theorems \ref{tt} and \ref{leap}, it is sufficient to show that $\lim_{t\rightarrow\infty} t^{\theta}x(t) =0$ and $\lim_{t\rightarrow\infty} e^{\tilde{\theta}t}\tilde{x}(t) = 0$. Now, since $\mathcal{P}(t) x(t)$ is experimentally convergent, in both cases, it follows from (\ref{omega}), that both design criterion hold true for $\theta$ and $\tilde{\theta}$. This implies that $I_t$ is asymptotically loose bounded in both cases, and thus the required rates of convergence are established. Note that as per the requirement of theorem \ref{leap}, in (\ref{quad}), for the cost function $\hat{f}$, the control law $\alpha (t) = \mathscr{O}\left(\frac{e^{-pt}}{t}\right)$, where $p>0$. Though this does not agree explicitly with Assumption 5, the $\mathscr{O}$ loose upper bound enables that there exists some finite $r>0$, such that $\mathscr{O}\left(\frac{e^{-pt}}{t}\right)\asympl \mathscr{O}\left(e^{-rt}\right)$. Equivalent results can be observed for the multidimensional cost functions of $d \geq 2$. This has been shown in figures: \ref{fig:ni} and \ref{fig:nii}.

It is observed that there is no variance for convergence rate in finite time in variation of condition numbers\footnote{Note that we have to make the step-size smaller up to $s = 0.0001$ for this experiment to adjust for the stiffness of the problem.} for a time scaled error for the multi-dimensional function $f(x, y) = \frac{1}{2}\left(x^2+\kappa y^2\right)$. We have shown the plot of $t^2\|x-x^*\|$ with time $t$ in figure: \ref{fig:scale} for the envelope of $\mathcal{P}(t) = t^2$, all other conditions remaining the same. This observation suggests that the envelope function is consistent for our given setup, for a class of convex functions, numerically. Another distinct observation is that for the same step-size, the average error is higher for the well-conditioned function.
\begin{figure}
    \centering
    \includegraphics[width = 0.75\columnwidth]{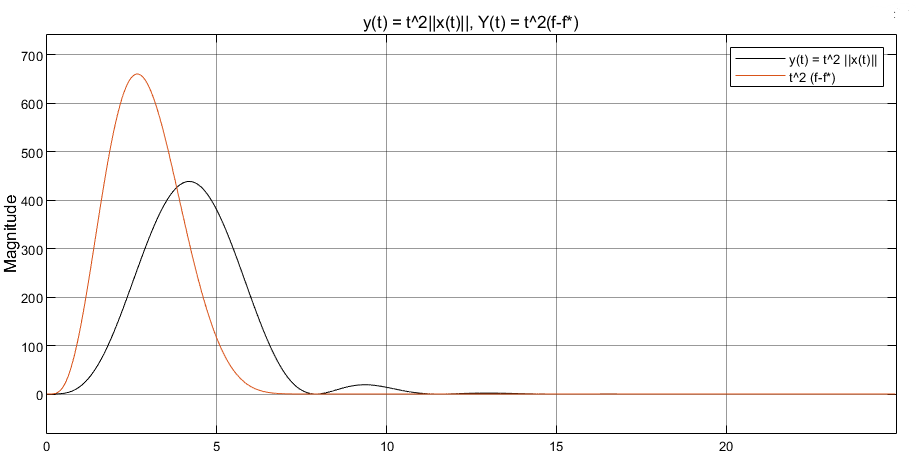}
    \caption{A polynomial convergence rate of $\mathcal{O}\left(\frac{1}{t^2}\right)$ with the cost function $F = \frac{\lambda}{2} (x^2+y^2)$ is observed. Experimentally verified for variations on $\lambda$.}
    \label{fig:ni}
\end{figure}
\begin{figure}
    \centering
    \includegraphics[width = 0.75\columnwidth]{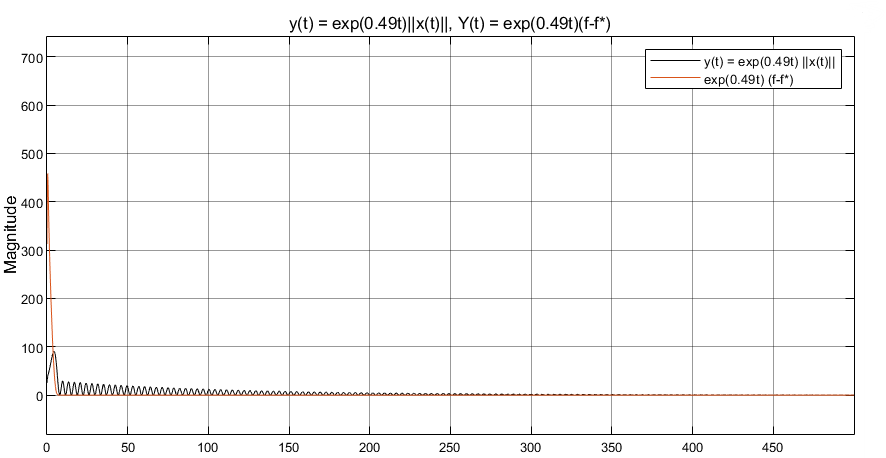}
    \caption{A numerical convergence rate of $\mathcal{O}\left(e^{-0.495t}\right)$ is verified for the whiplash gradient descent for the cost function, $\tilde{F} = \frac{1}{2}(x^2+y^2)$.}
    \label{fig:nii}
\end{figure}
\begin{figure}
    \centering
    \includegraphics[width = 0.75\columnwidth]{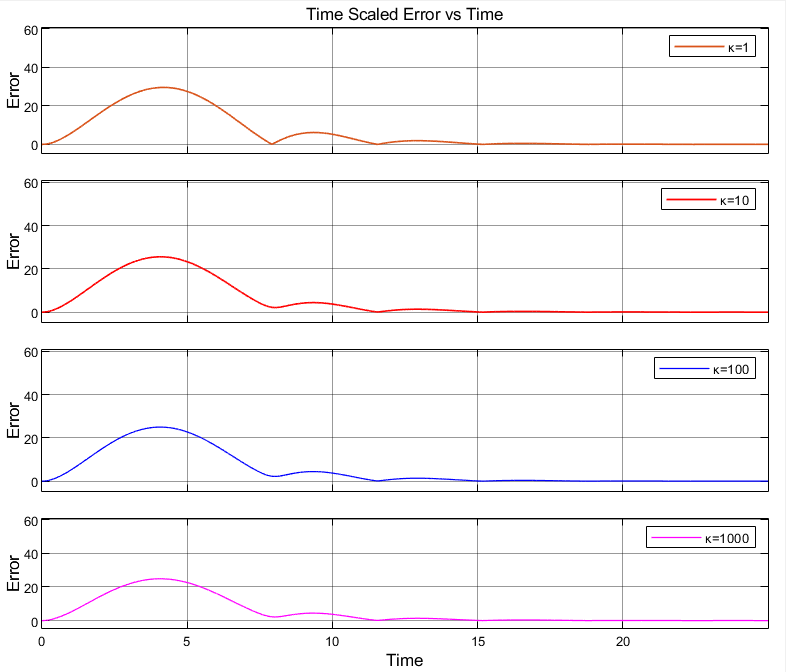}
    \caption{No variation in convergence time for the polynomial convergence rate of $\mathcal{O}\left(\frac{1}{t^2}\right)$ with the cost function $f(x, y) = \frac{1}{2} \left(x^2+\kappa y^2\right)$ upon varying the condition number $\kappa$. Error tracked with respect to time to an accuracy of $0.02\%$ of the minima.}
    \label{fig:scale}
\end{figure}

In summary, this method establishes using an integral anchored Lyapunov function, models for rates of convergences by using loose bound asymptotic analyses. The cornerstone of this approach lies in the numerical computation of a converger, which separates it from classical methods of establishing stability. In other words, we have reduced the analytical task of proving convergence rates by finding Lyapunov functions into the prediction of rates of convergence by finding appropriate convergers, which satisfy the critical conditions (\ref{h1}) or (\ref{h2}). This means that if $\nu\left(\theta,t\right)\left(x(t)-x^*\right)$ is convergent, then $\mathcal{P}\left(t, \eta\right)\left(f(x)-f^*\right)$ is convergent as well. Therefore, our analysis holds true as long as we choose a target $\eta$ for a suitable $\theta$, which does not violate the design criterion. This simplification provides a structured methodology for using experimental methods to check for the system's stability in a feasible time, without the necessity of resorting to finding Lyapunov functions --- specific to convergence rates, as elucidated in figure: \ref{fig:rate}.
\begin{figure}
    \centering    \includegraphics[width=0.6\columnwidth]{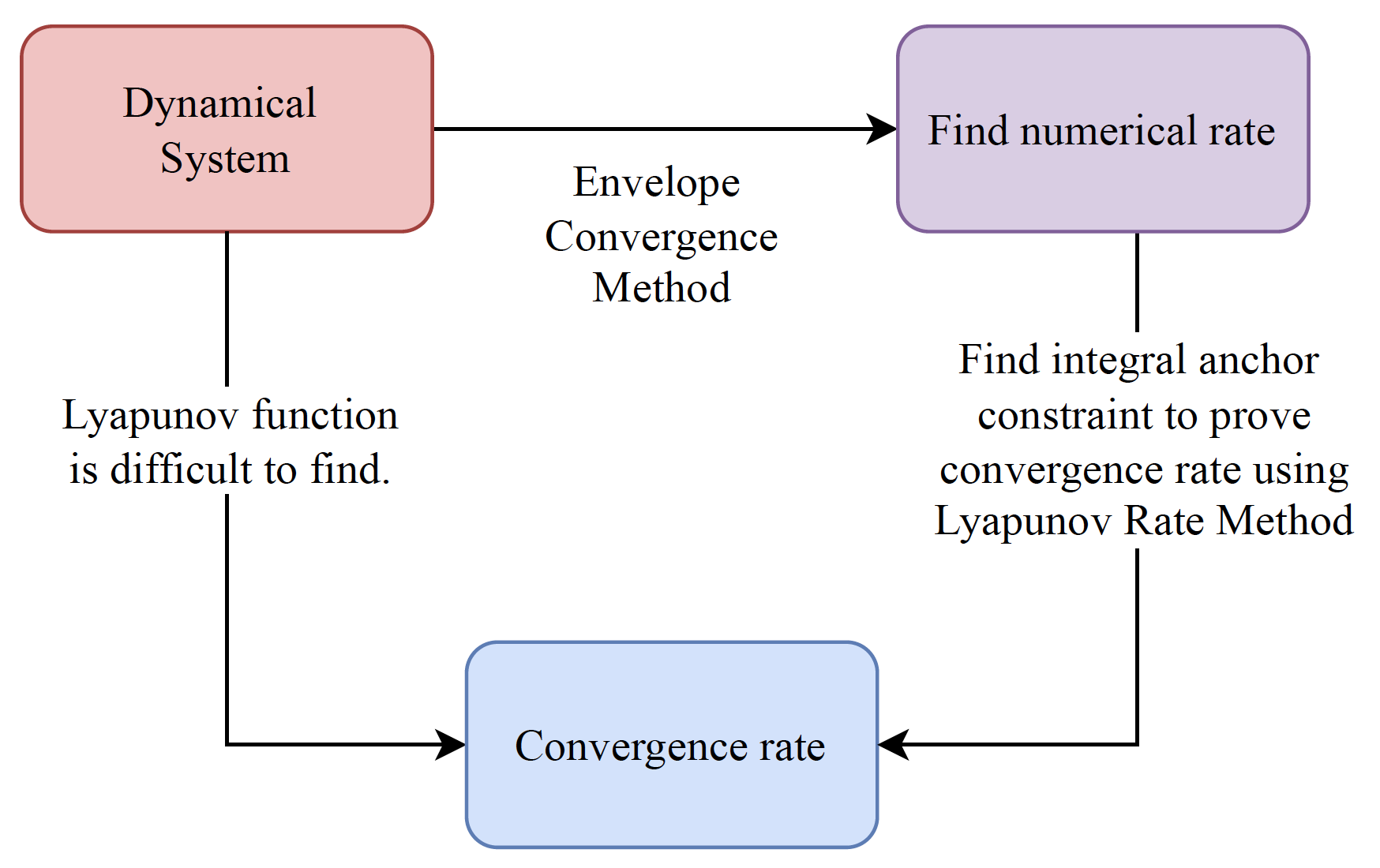}
    \caption{Method of Envelope convergence and Lyapunov Rate Method}
    \label{fig:rate}
\end{figure}
Note that every polynomial convergence rate can be expressed in terms of a weaker exponential rate but not vice-versa. In our analysis, it is inherent that a system globally converges but it is only with the use of the envelope method that we can test the global rates of convergence by studying the local convergence analysis of the time-scaled systems.

\section{Future Work}
\label{sec6}
As per the necessary  and sufficient condition for global optimization \cite{global}, to perform global optimization, one must inherently search globally (everywhere). The challenge of developing control frameworks in such cases is not merely computational but requires further investigation into the areas of global smoothness, evolution of attractors for time-varying systems and methods in infinite dimensional optimization. What we aim to achieve is the optimization of finite dimensional smooth functions which have guaranteed local convexity. This problem is two-pronged. 

First, finding convergence proofs of such methods become progressively complex with even more complex proofs for the rates of convergence, especially finding global minimas within a feasible frame of time. Thus, from an applied perspective in optimization, we need methods with guarantee numerically efficacy. This means all classical methods with limited guarantees of the functional settings, in which they are optimal are rendered sub-optimal in the face of non-convex settings, which is the industrial need. Secondly, it makes little sense to use deterministic algorithms in non-convex settings, which accelerate the task of finding one minima. The issue of saddle points in the context of deep learning, thus, becomes relevant, even more than local minimas. In \cite{exp}, the authors show that the probability of getting stuck in local minimas is exponentially rare. Instead algorithms which are computationally cheap and can steer out of local saddle points \cite{saddle} need to be further investigated. 

Methods built using the control framework, which are driven by systems' states or the local geometry of the cost function, inherently give rise to implicit schemes. This renders classical linear systems theory tools inappropriate for analysis because such schemes are inherently non-linear. Thus, in the future we need to develop tools to verify the efficacy of such methods which can adapt the structure of the feedback while being consistent to the energy limitations for global asymptotic stability in non-linear systems framework. This includes stochastic methods which can tune the noise with respect to the geometry or using adaptive control methods to address the structural uncertainty of the noisy oracles. 

Further theoretical work involves finding optimality criterion for implicit bounds on convergent sequences within bounded intervals in terms of passivity with sector-bound non-linearity \cite{Khalil}. Finally, an important component of our future work involves finding appropriate methods to optimize functions (appearing in deep learning) which have vanishing or exploding gradients \cite{vanish} or are non-smooth, using subgradient methods.

\section*{Acknowledgement}
The authors are grateful to the anonymous reviewers of the Asian Journal of Control for the detailed review report which allowed the authors to restructure the paper and correct a result. The authors are grateful to the School of Engineering, ANU for their support. Subhransu is grateful to Professor Emeritus Hédy Attouch, University of Montpellier, France, for productive discussions.
\bibliographystyle{IEEEtran}
\bibliography{Bib.bib}

% Generated by IEEEtran.bst, version: 1.14 (2015/08/26)
\begin{thebibliography}{10}
\providecommand{\url}[1]{#1}
\csname url@samestyle\endcsname
\providecommand{\newblock}{\relax}
\providecommand{\bibinfo}[2]{#2}
\providecommand{\BIBentrySTDinterwordspacing}{\spaceskip=0pt\relax}
\providecommand{\BIBentryALTinterwordstretchfactor}{4}
\providecommand{\BIBentryALTinterwordspacing}{\spaceskip=\fontdimen2\font plus
\BIBentryALTinterwordstretchfactor\fontdimen3\font minus
  \fontdimen4\font\relax}
\providecommand{\BIBforeignlanguage}[2]{{%
\expandafter\ifx\csname l@#1\endcsname\relax
\typeout{** WARNING: IEEEtran.bst: No hyphenation pattern has been}%
\typeout{** loaded for the language `#1'. Using the pattern for}%
\typeout{** the default language instead.}%
\else
\language=\csname l@#1\endcsname
\fi
#2}}
\providecommand{\BIBdecl}{\relax}
\BIBdecl

\bibitem{bottle}
X.~Peng, J.~Zhang, F.-Y. Wang, and L.~Li, ``Drill the cork of information
  bottleneck by inputting the most important data,'' \emph{IEEE Transactions on
  Neural Networks and Learning Systems}, vol.~33, no.~11, pp. 6360--6372, 2022.

\bibitem{beck}
A.~Beck, \emph{Introduction to Nonlinear Optimization}, 1st~ed.\hskip 1em plus
  0.5em minus 0.4em\relax Philadelphia, PA: Society for Industrial and Applied
  Mathematics, 2014.

\bibitem{odeo}
U.~Helmke, R.~Brockett, and J.~Moore, \emph{Optimization and Dynamical
  Systems}, ser. Communications and Control Engineering.\hskip 1em plus 0.5em
  minus 0.4em\relax Springer London, 2012.

\bibitem{Nem}
A.~Nemirovsky and D.~Yudin, ``Problem complexity and method efficiency in
  optimization,'' \emph{SIAM Review}, vol.~27, no.~2, pp. 264--265, 1985.

\bibitem{Nest}
Y.~Nesterov, \emph{Lectures on Convex Optimization}, 2nd~ed.\hskip 1em plus
  0.5em minus 0.4em\relax Springer Publishing Company, Incorporated, 2018.

\bibitem{primer}
J.~O. Royset and R.~J.-B. Wets, \emph{An Optimization Primer}.\hskip 1em plus
  0.5em minus 0.4em\relax Cham: Springer Series in Operations Research and
  Financial Engineering, Springer International Publishing, 2021, ch.
  Optimization under uncertainty, pp. 116--178.

\bibitem{WILL}
A.~C. Wilson, B.~Recht, and M.~I. Jordan, ``A {L}yapunov analysis of
  accelerated methods in optimization,'' \emph{Journal of Machine Learning
  Research}, vol.~22, no. 113, pp. 1--34, 2021.

\bibitem{Shi}
B.~Shi, S.~S. Du, W.~Su, and M.~I. Jordan, ``Acceleration via symplectic
  discretization of high-resolution differential equations,'' \emph{Advances in
  Neural Information Processing Systems}, vol.~32, 2019.

\bibitem{su2015differential}
W.~Su, S.~Boyd, and E.~J. Cand\`{e}s, ``A differential equation for modeling
  {N}esterov's accelerated gradient method: Theory and insights,''
  \emph{Journal of Machine Learning Research}, vol.~17, no.~1, 2016.

\bibitem{attouch2021fast}
H.~Attouch, R.~Boţ, and E.~R. Csetnek, ``{Fast optimization via Inertial
  dynamics with closed-loop damping},'' \emph{Journal of European Mathematical
  Society}, 2021.

\bibitem{Whiplash}
S.~S. Bhattacharjee and I.~R. Petersen, ``{A Closed Loop Gradient Descent
  Algorithm applied to Rosenbrock’s function},'' in \emph{2021 Australian \&
  New Zealand Control Conference (ANZCC)}, 2021, pp. 137--142.

\bibitem{ACC}
------, ``Analysis of closed-loop inertial gradient dynamics,'' in \emph{2022
  13th Asian Control Conference (ASCC)}, 2022, pp. 298--305.

\bibitem{boyd}
S.~Boyd and L.~Vandenberghe, \emph{Convex Optimization}.\hskip 1em plus 0.5em
  minus 0.4em\relax Cambridge University Press, Cambridge, 2004.

\bibitem{kar}
H.~Karimi, J.~Nutini, and M.~Schmidt, ``Linear convergence of gradient and
  proximal-gradient methods under the {P}olyak-{{\L}}ojasiewicz condition,''
  \emph{Joint European Conference on Machine Learning and Knowledge Discovery
  in Databases}, pp. 795--811, 2016.

\bibitem{cormen}
T.~H. Cormen, C.~E. Leiserson, R.~L. Rivest, and C.~Stein, \emph{Introduction
  to Algorithms}, 4th~ed.\hskip 1em plus 0.5em minus 0.4em\relax The MIT Press,
  2022.

\bibitem{ATTOUCH2016}
H.~Attouch, J.~Peypouquet, and P.~Redont, ``Fast convex optimization via
  inertial dynamics with {H}essian driven damping,'' \emph{Journal of
  Differential Equations}, vol. 261, no.~10, pp. 5734--5783, 2016.

\bibitem{wibi}
A.~Wibisono, A.~C. Wilson, and M.~I. Jordan, ``A variational perspective on
  accelerated methods in optimization,'' \emph{Proceedings of the National
  Academy of Sciences}, vol. 113, no.~47, pp. E7351--E7358, 2016.

\bibitem{tomas2010linear}
M.~Tomas-Rodriguez and S.~Banks, \emph{Linear, Time-varying Approximations to
  Nonlinear Dynamical Systems: with Applications in Control and
  Optimization}.\hskip 1em plus 0.5em minus 0.4em\relax Springer London, 2010.

\bibitem{hal}
J.-F. Aujol, C.~Dossal, and A.~Rondepierre, ``Optimal convergence rates for
  {N}esterov acceleration,'' \emph{SIAM Journal on Optimization}, vol.~29,
  no.~4, pp. 3131--3153, 2019.

\bibitem{bp}
D.~Donnelly and E.~Rogers, ``Symplectic integrators: An introduction,''
  \emph{American Journal of Physics}, vol.~73, no.~10, pp. 938--945, 2005.

\bibitem{fem}
L.~Olovsson, K.~Simonsson, and M.~Unosson, ``Selective mass scaling for
  explicit finite element analyses,'' \emph{International Journal for Numerical
  Methods in Engineering}, vol.~63, pp. 1436--1445, 07 2005.

\bibitem{hair}
E.~Hairer, C.~Lubich, and G.~Wanner, \emph{Geometric numerical integration},
  2nd~ed., ser. Springer Series in Computational Mathematics.\hskip 1em plus
  0.5em minus 0.4em\relax Springer-Verlag, Berlin, 2006.

\bibitem{global}
M.~Locatelli and F.~Schoen, \emph{Global Optimization}.\hskip 1em plus 0.5em
  minus 0.4em\relax Philadelphia, PA: Society for Industrial and Applied
  Mathematics, 2013.

\bibitem{baes}
M.~Baes, \emph{Estimate sequence methods: extensions and approximations}.\hskip
  1em plus 0.5em minus 0.4em\relax IFOR Internal report, 2009.

\bibitem{Khalil}
H.~K. Khalil, \emph{{Nonlinear systems; 3rd ed.}}\hskip 1em plus 0.5em minus
  0.4em\relax Upper Saddle River, NJ: Prentice-Hall, 2002.

\bibitem{Kalman1960}
R.~K{\'a}lm{\'a}n and J.~Bertram, ``Control system analysis and design via the
  second method of {L}yapunov: (i) continuous-time systems (ii) discrete time
  systems,'' \emph{IRE Transactions on Automatic Control}, vol.~4, pp.
  112--112, 1959.

\bibitem{ATTOUCH20175412}
H.~Attouch and A.~Cabot, ``Asymptotic stabilization of inertial gradient
  dynamics with time-dependent viscosity,'' \emph{Journal of Differential
  Equations}, vol. 263, no.~9, pp. 5412--5458, 2017.

\bibitem{exp}
D.~Soudry and E.~Hoffer, ``Exponentially vanishing sub-optimal local minima in
  multilayer neural networks,'' \emph{ICLR Workshop}, 2018.

\bibitem{saddle}
C.~Jin, R.~Ge, P.~Netrapalli, S.~M. Kakade, and M.~I. Jordan, ``How to escape
  saddle points efficiently,'' \emph{Proceedings of the 34th International
  Conference on Machine Learning - Volume 70}, p. 1724–1732, 2017.

\bibitem{vanish}
A.~Rehmer and A.~Kroll, ``On the vanishing and exploding gradient problem in
  gated recurrent units,'' \emph{IFAC-PapersOnLine}, vol.~53, no.~2, pp.
  1243--1248, 2020, 21st IFAC World Congress.

\end{thebibliography}
\end{document}